\documentclass{amsart}
\usepackage{graphicx, amssymb}
\usepackage[alphabetic,y2k,lite]{amsrefs}
\include{TVtikz}

\newtheorem*{theorem*}{Theorem}
\newtheorem{theorem}{Theorem}[section]
\newtheorem{lemma}[theorem]{Lemma}

\newtheorem{corollary}[theorem]{Corollary}

\theoremstyle{definition}
\newtheorem{definition}[theorem]{Definition}

\newtheorem{example}[theorem]{Example}

\theoremstyle{remark}

\numberwithin{equation}{section}
\newcommand{\firef}[1]{Figure~{\rm\ref{#1}}}
\newcommand{\thref}[1]{Theorem~{\rm\ref{#1}}}

\newcommand{\leref}[1]{Lemma~{\rm\ref{#1}}}
\newcommand{\coref}[1]{Corollary~{\rm\ref{#1}}}

\newcommand{\deref}[1]{Definition~{\rm\ref{#1}}}
\newcommand{\exref}[1]{Example~{\rm\ref{#1}}}

\newcommand{\seref}[1]{Section~{\rm\ref{#1}}}


\newcommand{\fig}[1]
{\raisebox{-0.5\height}%
{\includegraphics{#1}}}
\newcommand{\figscale}[2]
{\raisebox{-0.5\height}%
{\includegraphics[scale=#1]{#2}}}
\newcommand{\cc}[1]{\underset{\scriptstyle #1}{\circ}}
\newcommand{\ccc}[1]{\underset{\scriptstyle #1}{\bullet}}

\newcommand{\ov}{\overline}
\newcommand{\del}{\partial}
\newcommand{\<}{\langle}
\renewcommand{\>}{\rangle}
\newcommand{\isoto}{\xrightarrow{\sim}}       
\newcommand{\xxto}{\xrightarrow}              

\newcommand{\one}{\mathbf{1}}
\newcommand{\CC}{\mathbb{C}}       
\newcommand{\kk}{\mathbf{k}}       
\newcommand{\Z}{\mathbb{Z}}       
\newcommand{\R}{\mathbb{R}}       
\newcommand{\N}{\mathcal{N}}       
\newcommand{\DD}{\mathcal{D}}      
\newcommand{\C}{\mathcal{C}}      
\newcommand{\M}{\mathcal{M}}      
\newcommand{\Vect}{\mathcal{V}ec}  
\newcommand{\VecG}{\mathcal{V}ec^G}  

\newcommand{\al}{\alpha}
\newcommand{\be}{\beta}
\newcommand{\ga}{\gamma}
\newcommand{\Ga}{\Gamma}

\newcommand{\ph}{\varphi}
\newcommand{\Ph}{\Phi} 

\newcommand{\Si}{\Sigma}

\newcommand{\g}{\mathfrak{g}}                  

\DeclareMathOperator{\Rep}{Rep}

\DeclareMathOperator{\Int}{Int}
\DeclareMathOperator{\Irr}{Irr}
\DeclareMathOperator{\id}{id}

\DeclareMathOperator{\Hom}{Hom}

\DeclareMathOperator{\tr}{tr}

\DeclareMathOperator{\ev}{ev} 
\DeclareMathOperator{\coev}{coev} 
\DeclareMathOperator{\Obj}{Obj}


\begin{document}

\title{Turaev-Viro invariants as an extended TQFT}

\author{Benjamin Balsam}
\author{Alexander Kirillov, Jr.}
   \address{Department of Mathematics, SUNY at Stony Brook, 
            Stony Brook, NY 11794, USA}
    \email{balsam@math.sunysb.edu}
    \email{kirillov@math.sunysb.edu}
    \urladdr{http://www.math.sunysb.edu/\textasciitilde kirillov/}
\thanks{This  work was partially suported by NSF grant DMS-0700589 }

\begin{abstract}
In this paper we show how one can extend Turaev-Viro invariants,
defined for an arbitrary spherical fusion category $\C$, to 3-manifolds
with corners. We demonstrate that this gives an extended TQFT which
conjecturally coincides with the Reshetikhin-Turaev TQFT corresponding to
the Drinfeld center $Z(\C)$. In the present paper we give a partial proof
of this statement. 

\end{abstract}

\maketitle
\section*{Introduction}
Turaev--Viro (TV) invariants of 3-manifolds $Z_{TV}(M)$ were defined by
Turaev and Viro in \ocite{TV} using a quantum analog of 6j symbols for
$sl_2$. In the same paper it was shown that these invariants can be
extended to a 3-dimensional TQFT. 

Later, Barrett and Westbury \ocite{barrett} showed that these
invariants can be defined for any monoidal category $\C$ possessing a
suitable notion of duality (``spherical category''). In particular, they
can be defined for the category of $G$-graded vector spaces, where $G$ is a
finite group. In this special case, the resulting TQFT coincides with the
version of Chern--Simons theory with the finite gauge group $G$, described
in \ocite{freed-quinn} (or in more modern language, in \ocite{FHLT}); in
physics literature, this theory is also known as the Levin--Wen model.

In the case when the category $\C$ is not only monoidal but in fact
modular (in particular, braided), there is another 3-dimensional TQFT
based on $\C$, namely Reshetikhin--Turaev TQFT. It was shown in
\ocite{turaev} that in this case, one has
$$
Z_{TV, \C}(M)=Z_{RT,\C}(M)Z_{RT,\C}(\ov{M})
$$
where $\ov{M}$ is $M$ with opposite orientation. In particular, if $\C$ is
unitary category over $\CC$, then $Z_{TV, \C}(M)=|Z_{RT,\C}(M)|^2$.

It has been conjectured that in the general case, when $\C$ is a spherical
(but not necessarily modular) category, one has
$$
Z_{TV, \C}(M)=Z_{RT, Z(\C)}(M)
$$
where $Z(\C)$ is the so-called Drinfeld center of $\C$ (see
\seref{s:center}); moreover, this extends to an isomorphism of the
corresponding TQFTs. Some partial results in this direction can be found,
for example, in \ocite{muger2}; however, the full statement remained a
conjecture.

The current paper is the first in a series giving a proof of this
conjecture for an arbitrary spherical category $\C$ over an algebraically
closed field $\kk$ of characteristic zero. In the current paper, we extend
TV invariants to 3-manifolds with corners of codimension 2 (or, which is
closely related, to 3-manifolds with framed tangles inside); in the
language of \ocite{lurie}, we construct a 3-2-1  extension of TV theory.
This extension satisfies $Z_{TV}(S^1)=Z(\C)$: boundary circles of
2-surfaces
should be colored by objects of $Z(\C)$. We also show that for an
$n$-punctured sphere, the
resulting vector space coming from this extended TV theory coincides with
the one coming from Reshetikhin-Turaev theory based on $Z(\C)$.

Thsi extended theory is related to the one
suggested in \ocite{turaev94}; however, that paper only considered the case
when $\C$ itself is modular and the components of links are labeled by
objects of $\C$, not $\Z(\C)$. We will investigate the relation between
these constructions in forthcoming papers. 

After the preliminary version of this paper was posted, a proof of formula
$Z_{TV, \C}(M)=Z_{RT, Z(\C)}(M)$ was announced by Turaev
\ocite{turaev-conference}; howevere, details of this proof are not yet
available.

\subsection*{Acknowledgments}
The authors would like  to thank
Oleg Viro, Victor Ostrik, Kevin Costello and Owen Gwilliam
for helpful suggestions and discussions.

\section{Preliminaries I: spherical categories}\label{s:prelim}

In this section we collect notation and some facts about spherical
categories.

We fix   an algebraically  closed field  $\kk$ of characteristic 0 and
 denote by $\Vect$ the category of finite-dimensional vector spaces over
$\kk$.

Throughout the paper, $\C$ will denote a spherical fusion category over 
 $\kk$. We refer the reader
to the paper \ocite{drinfeld} for the definitions and properties of such
categories. Note that we are not requiring a braiding on $\C$. 

In particular, $\C$ is semisimple with finitely many
isomorphism classes of simple objects. We will denote by $\Irr(\C)$ the
set of isomorphism classes of simple objects. We will also denote by $\one$
the unit object in $\C$ (which is simple).

Two main examples of spherical categories are the category $\VecG$ of
finite-dimensional $G$-graded vector spaces (where $G$ is a finite group)
and the category $\Rep(U_q\g)$ which is the semisimple part  of the
category of representations of a quantum group $U_q\g$ at a root of unity;
this last category is actually modular, but we will not be using this.

To simplify the notation, we will assume that $\C$ is a strict pivotal
category, i.e. that $V^{**}=V$. As is well-known, this is not really a
restriction, since any pivotal category is equivalent to a strict pivotal
category.

We will denote, for an object $X$ of $\C$, by 
$$
d_X=\dim X\in \kk
$$
its categorical dimension; it is known that for simple $X$, $d_X$ is
non-zero. We will fix, for any simple object $X_i\in \C$, a choice of
square root $\sqrt{d_X}$ so that for $X=\one$, $\sqrt{d_\one}=1$ and that
for any simple $X$, $\sqrt{d_X}=\sqrt{d_{X^*}}$. 

We will also denote
\begin{equation}\label{e:DD}
\DD=\sqrt{\sum_{x\in \Irr(\C)}d_X^2}
\end{equation}
(throughout the paper, we fix a choice of the square root). Note that by
results of \ocite{ENO}, $\DD\ne 0$.

We define the
functor $\C^{\boxtimes n}\to \Vect$ by 
\begin{equation}\label{e:vev}
 \<V_1,\dots,V_n\>=\Hom_\C(\one, V_1\otimes\dots\otimes V_n)
\end{equation}
for any collection $V_1,\dots, V_n$ of objects of $\C$. Note that pivotal
structure gives functorial isomorphisms 
\begin{equation}\label{e:cyclic}
 z\colon\<V_1,\dots,V_n\>\simeq \<V_n, V_1,\dots,V_{n-1}\>
\end{equation}
such that $z^n=\id$ (see \ocite{BK}*{Section 5.3}); thus, up to a canonical
isomorphism,
the space $\<V_1,\dots,V_n\>$ only depends on the cyclic order of
$V_1,\dots, V_n$. 

We have a natural composition map 
\begin{equation}\label{e:composition}
\begin{aligned}
 \<V_1,\dots,V_n, X\>\otimes\<X^*, W_1,\dots,
W_m\>&\to\<V_1,\dots,V_n, W_1,\dots, W_m\>\\
\ph\otimes\psi\mapsto \ph\cc{X}\psi= \ev_X\circ (\ph\otimes\psi)
\end{aligned}
\end{equation}
where $\ev_X\colon X\otimes  X^*\to \one$ is the evaluation morphism. It
follows from semisimplicity of $\C$ that direct sum of these composition
maps gives a functorial isomorphism
\begin{equation}\label{e:gluing1}
 \bigoplus_{X\in \Irr(\C)} \<V_1,\dots,V_n, X\>\otimes\<X^*, W_1,\dots,
W_m\>\simeq\<V_1,\dots,V_n, W_1,\dots, W_m\>.
\end{equation}

Note that for any objects $A,B\in \Obj \C$, we have a 
non-degenerate pairing $\Hom_\C(A,B)\otimes \Hom_\C(A^*,B^*)\to \kk$ 
defined by 
\begin{equation}\label{e:pairing}
(\ph, \ph')=(\one\xxto{\coev_A}A\otimes A^*\xxto{\ph\otimes \ph'}
  B\otimes B^*\xxto{\ev_B}\one)
\end{equation}
In particular, this gives us a non-degenerate pairing 
$\<V_1,\dots,V_n\>\otimes \<V_n^*,\dots,V_1^*\>\to \kk$ and thus,
 functorial isomorphisms 
\begin{equation}\label{e:dual}
\<V_1,\dots,V_n\>^*\simeq \<V_n^*,\dots,V_1^*\>
\end{equation}
compatible with the cyclic permutations \eqref{e:cyclic}.

We will frequently use graphical representations of morphisms in the
category $\C$, using tangle diagrams as in \ocite{turaev} or \ocite{BK}.
However, our convention is that of \ocite{BK}: a tangle with $k$ strands
labeled $V_1,\dots,V_k$ at the bottom and $n$ strands labeled $W_1,\dots,
W_n$ at the top is considered as a morphism from $V_1\otimes\dots\otimes
V_k\to W_1\otimes \dots\otimes W_n$. As usual, by default all strands are
oriented going from the bottom to top. Note that since $\C$ is assumed to
be a spherical category and not a braided one, no crossings are allowed in
the diagrams. 

For technical reasons, it is convenient to extend the graphical calculus by
allowing, in addition to rectangular coupons, also circular coupons
labeled with morphisms $\ph\in \<V_,1\dots, V_n\>$. This is easily seen to
be equivalent to the original formalism: every such circular coupon can be
replaced by the usual rectangular one as shown in \firef{f:round_coupon}.
\begin{figure}[ht]
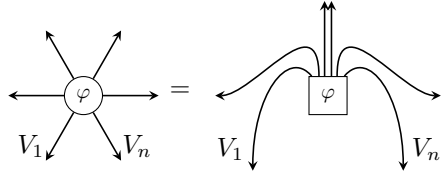

\tzRoundCouponI
=
\tzRoundCouponII
\caption{Round coupons}\label{f:round_coupon}
\end{figure}

We will also use the following convention: if a figure contains a pair of
circular coupons, one with outgoing edges labeled $V_1,\dots, V_n$ and
the other with edges labeled $V_n^*,\dots, V_1^*$ , and the coupons are 
labeled by pair of letters $\ph,\ph^*$ (or $\psi, \psi^*$, or \dots)
it will stand for summation over the dual bases:
\begin{equation}\label{e:summation_convention}
\tzSummationConventionI
\quad = \sum_\al\quad
\tzSummationConventionII
\end{equation}
where $\ph_\al\in \<V_1,\dots, V_n\>$, $\ph^\al\in \<V_n^*,\dots, V_1^*\>$
are dual bases with respect to pairing \eqref{e:pairing}.

The following lemma, proof of which is left to the reader,  lists some
properties of this pairing and its relation with the composition maps
\eqref{e:composition}. 

\begin{lemma}\label{l:pairing}
\par\noindent
\begin{enumerate}
 \item 
 If $X$ is simple and $\ph\in \<X,A\>$, $\ph'\in\<A^*,X^*\>$ then 
$$
\fig{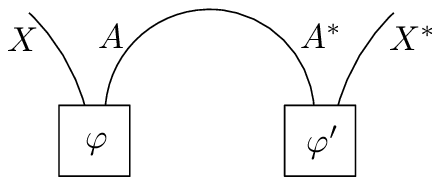}=\frac{(\ph,\ph')}{d_X}\quad \fig{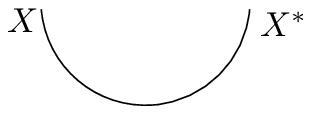}
$$
\item 
  $$
  \sum_{i\in\Irr(\C)} d_i 
  \tzPairingIII=\tzPairingIV
  $$
  \textup{(}we use here convention \eqref{e:summation_convention}\textup{)}. 
\item If $X$ is simple, $\ph\in \<A,
  X\>$, $\ph'\in \<X^*, A^*\>$,  $\psi\in \<X^*,B\>$, $\psi'\in
\<B^*,X\>$, then 
\begin{equation}\label{e:compatibility}
(\ph\cc{X} \psi,\psi'\cc{X^*}\ph')
=\frac{1}{d_X} (\ph,\ph')(\psi',\psi)
\end{equation}
\textup{(}see \firef{f:compatibility}\textup{)}.
\begin{figure}[ht]
$$\fig{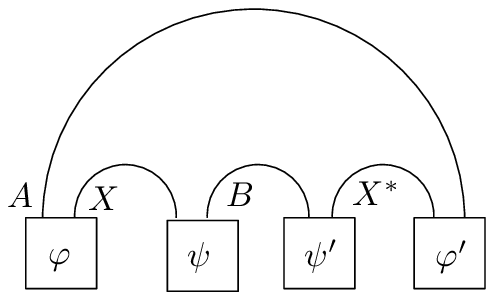}=\frac{(\ph,\ph')}{d_X}\fig{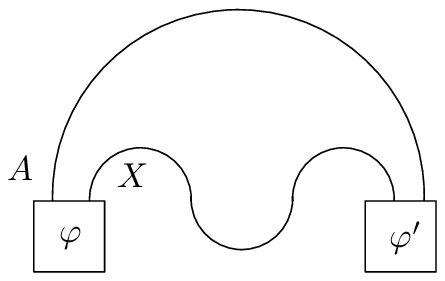}=\frac{(\ph,
\ph')(\psi',\psi)}{d_X}$$

\caption{Compatibility of pairing with composition.}
  \label{f:compatibility}
\end{figure}
\end{enumerate} 
\end{lemma}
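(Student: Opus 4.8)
The plan is to prove all three parts by unpacking the definitions in terms of the evaluation and coevaluation morphisms and using semisimplicity; the statement says the proof is left to the reader, so the expectation is that each identity follows from routine graphical manipulations together with Schur's lemma. I would organize the argument around the elementary fact that for a simple object $X$, the composite $\ev_X\circ\coev_{X^*}\colon\one\to\one$ (equivalently, the $\theta$-loop of $X$) equals $d_X\cdot\id_\one$, and that $\Hom_\C(\one,\one)=\kk$.

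For part (1), I would interpret the left-hand diagram: it is $\ph\in\<X,A\>$ capped against $\ph'\in\<A^*,X^*\>$ along the $A$-strand, leaving an $X$-labeled loop. Since $X$ is simple, $\Hom_\C(X,X)=\kk\cdot\id_X$, so the morphism obtained by composing $\ph$ and $\ph'$ along $A$ must be a scalar multiple of $\id_X$ (read appropriately as an element of $\<X,X^*\>$, i.e. the coevaluation). That scalar is precisely $(\ph,\ph')/d_X$ by definition \eqref{e:pairing}: pairing picks out exactly the $A$-cap-$A^*$-cup composite, and the normalization by $d_X$ accounts for the fact that closing up the remaining $X$-strand into a loop multiplies by $d_X$. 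The right-hand side is then the $X$-loop (times that scalar). This is a direct application of Schur plus the definition of the pairing; no real obstacle here.

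For part (2), this is the completeness relation \eqref{e:gluing1} paired down to the bilinear-form level. I would take the isomorphism \eqref{e:gluing1} with the $W$'s being $V_n^*,\dots,V_1^*$ (or simply use \eqref{e:dual} and \eqref{e:summation_convention}): expanding $\id$ on $\<V_1,\dots,V_n\>$ in a basis and its dual basis, and then gluing the two coupons along all the $V$-strands, produces exactly $\sum_i d_i$ times the left-hand diagram on one side and the nested cups/caps on the other. The factor $d_i$ appears because the composition map \eqref{e:composition} is normalized by the evaluation, whereas the basis/dual-basis insertion coming from \eqref{e:gluing1} is with respect to the pairing \eqref{e:pairing}, and Lemma part (1) relates the two by exactly a factor $d_i$. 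So part (2) is essentially part (1) summed over $\Irr(\C)$ combined with the gluing isomorphism.

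For part (3), the compatibility identity, I would compute $(\ph\cc{X}\psi,\psi'\cc{X^*}\ph')$ directly from \eqref{e:pairing}: write out the big composite $\one\xrightarrow{\coev_A}A\otimes A^*\to B\otimes B^*\xrightarrow{\ev_B}\one$ with the morphism $\ph\cc{X}\psi\otimes\psi'\cc{X^*}\ph'$ in the middle, then use that $\ph\cc{X}\psi=\ev_X\circ(\ph\otimes\psi)$ to re-draw the picture so that it contains an $X$-labeled loop connecting the $\ph,\ph'$ part to the $\psi,\psi'$ part. Isotoping the diagram, it separates (up to the shared $X$-loop) into the $A$-pairing of $\ph$ with $\ph'$ and the $B$-pairing of $\psi'$ with $\psi$; applying part (1) to collapse the $X$-loop contributes the factor $1/d_X$. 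This is exactly \firef{f:compatibility}. The main thing to be careful about is bookkeeping of orientations and the cyclic-ordering isomorphisms \eqref{e:cyclic}, so that "$\psi'\cc{X^*}\ph'$" is paired against "$\ph\cc{X}\psi$" in the correct cyclic order; the genuinely substantive input is still just Schur's lemma for the simple object $X$, so I do not anticipate any serious obstacle beyond drawing the diagrams carefully.
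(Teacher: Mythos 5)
Your proof is correct and is exactly the routine argument the paper intends when it leaves the proof to the reader: part (1) is Schur's lemma applied to the one-dimensional space $\<X,X^*\>$ with the scalar fixed by closing the $X$-loop, part (2) is the decomposition of the identity coming from \eqref{e:gluing1} normalized via part (1), and part (3) follows by isotoping the closed diagram and collapsing the $X$-loop with part (1). No gaps.
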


\begin{corollary}\label{c:composition2}
Let $X$ be a simple object. Define the rescaled composition map
\begin{equation}\label{e:composition2}
\begin{aligned}
 \<V_1,\dots,V_n, X\>\otimes\<X^*, W_1,\dots,
W_m\>&\to\<V_1,\dots,V_n, W_1,\dots, W_m\>\\
\ph\otimes\psi&\mapsto \ph\ccc{X}\psi= \sqrt{d_X}\ \ev_X\circ
(\ph\otimes\psi)
\end{aligned}
\end{equation}
Then the rescaled composition map agrees with the pairing: 
$$(\ph\ccc{X} \psi,\psi'\ccc{X^*}\ph')= (\ph,\ph')(\psi',\psi)
$$ 
\textup{(}same notation as in \eqref{e:compatibility}\textup{)}. 
\end{corollary}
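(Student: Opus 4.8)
The plan is to deduce this directly from \leref{l:pairing}(3) by tracking the normalization factors. First I would note that, by definition \eqref{e:composition2} together with \eqref{e:composition}, the rescaled composition is simply the ordinary composition times a scalar:
$$
\ph\ccc{X}\psi=\sqrt{d_X}\,(\ph\cc{X}\psi),\qquad
\psi'\ccc{X^*}\ph'=\sqrt{d_{X^*}}\,(\psi'\cc{X^*}\ph').
$$
Since $X$ is simple, so is $X^*$, so both scalars are defined and nonzero, and by the normalization fixed in \seref{s:prelim} we have $\sqrt{d_X}=\sqrt{d_{X^*}}$, hence $\sqrt{d_X}\sqrt{d_{X^*}}=d_X$.

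Next I would invoke bilinearity of the pairing \eqref{e:pairing}: the map $(\ph,\ph')\mapsto(\ph,\ph')$ is $\kk$-bilinear in each argument, so scalars factor out of each slot. Applying this to pull $\sqrt{d_X}$ and $\sqrt{d_{X^*}}$ out and then using \eqref{e:compatibility} from \leref{l:pairing}(3) gives
$$
(\ph\ccc{X}\psi,\ \psi'\ccc{X^*}\ph')
=\sqrt{d_X}\,\sqrt{d_{X^*}}\,(\ph\cc{X}\psi,\ \psi'\cc{X^*}\ph')
=\frac{\sqrt{d_X}\sqrt{d_{X^*}}}{d_X}\,(\ph,\ph')(\psi',\psi)
=(\ph,\ph')(\psi',\psi),
$$
which is exactly the claimed identity.

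There is no real obstacle here: the content is entirely in \leref{l:pairing}(3), and the corollary is the observation that the awkward factor $1/d_X$ appearing there is precisely cancelled by distributing one factor of $\sqrt{d_X}$ into each of the two composition maps. The only point requiring a moment's care is the bookkeeping of which square root goes with which object — one must use $\sqrt{d_{X^*}}$ (not $\sqrt{d_X}$) for the composition $\psi'\ccc{X^*}\ph'$ — but this is harmless because of the standing convention $\sqrt{d_X}=\sqrt{d_{X^*}}$. If one wished to avoid even mentioning that convention, one could instead observe that $d_X=d_{X^*}$ for any object, so $\sqrt{d_X}\sqrt{d_{X^*}}$ is a square root of $d_X^2$, hence equals $\pm d_X$, and the sign is fixed to $+$ by the same convention; either way the factor is $d_X$.
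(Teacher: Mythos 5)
Your proof is correct and is exactly the argument the paper intends: the corollary is stated without proof as an immediate consequence of \leref{l:pairing}(3), and your bookkeeping — factoring $\sqrt{d_X}\sqrt{d_{X^*}}=d_X$ out by bilinearity of \eqref{e:pairing} to cancel the $1/d_X$ in \eqref{e:compatibility} — is the whole content. The remark about needing the convention $\sqrt{d_X}=\sqrt{d_{X^*}}$ fixed in \seref{s:prelim} is a correct and worthwhile observation.
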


The following result, which easily follows from  \leref{l:pairing}, will
also be very useful.
\begin{lemma}\label{l:pairing2}
 If the subgraphs $A$, $B$ are not connected, then 
 $$
 \tzPairingV=\tzPairingVI
 $$
\end{lemma}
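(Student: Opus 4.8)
The plan is to recognize this identity as a \emph{completeness relation} (resolution of the identity) for the finite-dimensional vector space $H:=\<V_1,\dots,V_n\>$ together with its nondegenerate pairing against $H^*\simeq\<V_n^*,\dots,V_1^*\>$ from \eqref{e:dual}, the crucial point being that the hypothesis ``$A$ and $B$ disconnected'' is exactly what allows the left-hand picture to be factored through $H$.

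First I would reduce to the case in which the subdiagrams $A$ and $B$ carry \emph{only} the displayed legs $V_1,\dots,V_n$. This is harmless: both sides are $\kk$-linear in $A$ and in $B$ separately, and since $A$ and $B$ are disconnected no edge or coupon of $A$ meets one of $B$, so any further external legs of $A$ (respectively of $B$) merely tensor along and are carried unchanged through the whole argument. After this reduction $A$ evaluates to a vector $[A]\in H$, and --- since the strands $V_i$ run \emph{into} $B$ --- the subdiagram $B$ evaluates, through \eqref{e:pairing} and \eqref{e:dual}, to a vector $[B]\in H^*$.

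Next I would evaluate the left-hand side. By the summation convention \eqref{e:summation_convention} it is $\sum_\al$ of a planar diagram that is a disjoint union of two connected pieces: $A$ together with the coupon labelled $\ph$, and the coupon labelled $\ph^*$ together with $B$. The value of a disjoint union of two pieces is the product of their values, and reading off each piece from the definition \eqref{e:pairing} of the pairing --- after aligning, via the cyclic isomorphism \eqref{e:cyclic}, the orders in which the legs enter, which is the only book-keeping involved and is routine because the pairing is compatible with \eqref{e:cyclic} --- the first piece equals $([A],\ph^\al)$ and the second equals $(\ph_\al,[B])$. Hence the left side is $\sum_\al ([A],\ph^\al)\,(\ph_\al,[B])$. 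Now invoke the defining property of dual bases: writing $[A]=\sum_\be c_\be\ph_\be$ gives $([A],\ph^\al)=c_\al$, so the sum collapses to $\bigl(\sum_\al c_\al\ph_\al,[B]\bigr)=([A],[B])$, and $([A],[B])$ is precisely the value of the right-hand diagram, again by \eqref{e:pairing}. Equivalently, $\sum_\al\ph_\al\otimes\ph^\al$ is the canonical element of $H\otimes H^*=\End(H)$ representing $\id_H$, and disconnectedness is what lets the insertion be read as acting on $H$ rather than on $V_1\otimes\dots\otimes V_n$ --- on the latter object the insertion is the projector onto the $\one$-isotypic summand, not the identity, so the hypothesis really is essential.

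I do not expect a real obstacle here: the mathematical core is the one-line fact that a sum over dual bases represents $\id_H$. The only genuine work is the book-keeping in the middle step --- identifying each of the two connected pieces with an honest value of the pairing \eqref{e:pairing}, consistently with the cyclic-order isomorphisms \eqref{e:cyclic}, \eqref{e:dual} and with the convention \eqref{e:summation_convention} --- together with making explicit that it is precisely the disconnectedness of $A$ and $B$ that licenses ``value of the diagram $=$ product of the values of its components''. This is exactly the kind of verification the paper defers with ``easily follows from \leref{l:pairing}'': part (1) of that lemma is the tool one would use to recognize each capped-off piece as a pairing value.
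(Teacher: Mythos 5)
Your proof is correct and matches the argument the paper intends: the paper gives no written proof beyond the remark that the statement ``easily follows from \leref{l:pairing}'', and your identification of the left-hand side as $\sum_\al([A],\ph^\al)(\ph_\al,[B])$, collapsing to $([A],[B])$ by duality of the bases, is exactly the expected completeness-relation argument, with part (1) of \leref{l:pairing} (at $X=\one$) supplying the book-keeping that each closed component is a pairing value. Your closing observation --- that disconnectedness is what lets the insertion act as $\id_H$ on $H=\<V_1,\dots,V_n\>$ rather than as the $\one$-isotypic projector on $V_1\otimes\dots\otimes V_n$ --- correctly isolates why the hypothesis is essential.
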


Finally, we will need the following result, which is the motivation for
the name ``spherical category''.

Let $\Ga$ be an oriented graph embedded in the sphere $S^2$, where each
edge $e$ is colored by  an object $V(e)\in \C$, and each vertex $v$ is
colored by a morphism $\ph_v\in \<V(e_1)^{\pm },\dots V(e_n)^{\pm} \>$,
where $e_1,\dots,e_n$ are the edges adjacent to vertex $v$, taken in
clockwise order, and $V(e_i)^{\pm}=V(e_i)$ if $e_i$ is outgoing
edge, and $V^*(e_i)$ if $e_i$ is the incoming edge.

By removing a point from $S^2$ and identifying $S^2\setminus{pt}\simeq
\R^2$, we can consider $\Ga$ as a planar graph. Replacing each vertex $v$
by a circular coupon labeled by morphism $\ph_v$ as shown in
\firef{f:sphere_diagram}, we get a graph of the type discussed above and
which therefore defines a number $Z_{RT}(\Ga)\in \kk$ (see, e.g.,
\ocite{BK} or \ocite{turaev}).

\begin{figure}[ht]
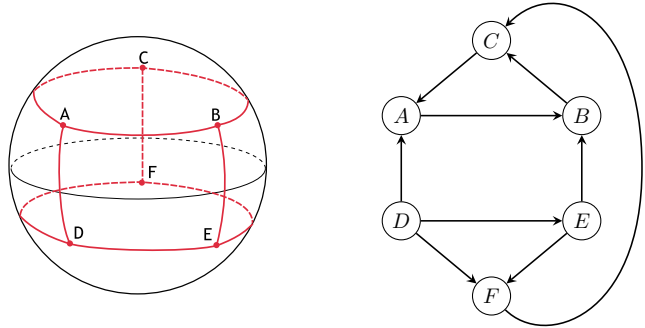

 \fig{figures/dual_graph2b}\qquad\qquad
 \tzSphereDiagram
\caption{Graph on a sphere and its ``flattening''
to the plane}\label{f:sphere_diagram}
\end{figure}

\begin{theorem}\ocite{barrett}\label{t:spherical1}
  The number $Z_{RT}(\Ga)\in \kk$ does not depend on the choice of a point 
  to remove from $S^2$ or on the choice of  order of edges at
  vertices compatible with the given cyclic order and thus defines an
  invariant of colored graphs on the sphere.
\end{theorem}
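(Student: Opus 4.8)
The plan is to show that $Z_{RT}(\Ga)$ is unchanged under (a) a change in the choice of the removed point, and (b) a change in the ordering of edges at each vertex within the prescribed cyclic order. Since $S^2$ is connected, any two removed points $p, p'$ can be connected by a path avoiding $\Ga$; moving $p$ along this path and watching how the planar picture changes, the only essential move is pushing $p$ across a single edge of $\Ga$. Concretely, removing a point and stereographically projecting, the edge nearest to $p$ can be drawn as a long arc going "around the back" of the plane; sliding $p$ past that edge corresponds to taking that arc and pulling it across infinity to the other side of the diagram. So the crux of part (a) is the following local claim: a strand colored $V$ that runs from one region of the planar diagram out to infinity and back in can be rerouted "the other way around" without changing $Z_{RT}$.

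First I would reduce this to isotopy invariance of the Reshetikhin--Turaev evaluation of planar graphs (which is standard for pivotal categories and is implicit in the graphical calculus being used), together with the following two elementary facts that follow from sphericality: a closed loop colored $X$ encircling nothing contributes $d_X$, and — more to the point — the "left trace" equals the "right trace" for any endomorphism, i.e. for $f \in \End(V \otimes W)$ one has $\tr_W^{\text{left}}(f) = \tr_W^{\text{right}}(f)$. This last identity is exactly the definition of a spherical category, and it is precisely what allows a cap-and-cup (a strand going out to infinity and back) to be slid from the "top/right" side of a coupon to the "bottom/left" side. I would phrase the point-moving argument so that each elementary move across an edge is realized as one application of this sphericality identity, after first isotoping the planar graph so that the relevant strand is the unique one near infinity; by the gluing/composition isomorphisms \eqref{e:gluing1} one may even assume this strand is colored by a simple object, which streamlines bookkeeping but is not strictly necessary.

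For part (b), changing the order of edges at a vertex compatibly with the cyclic order: the cyclic reorderings are generated by the single cyclic shift, and the isomorphism $z$ of \eqref{e:cyclic} with $z^n = \id$ is exactly the statement that replacing $\ph_v$ by $z(\ph_v)$ and correspondingly cyclically permuting which edge is drawn "first" on the circular coupon does not change the evaluation. Graphically this is again an isotopy of the planar diagram — rotating the circular coupon — combined with the fact (\ocite{BK}*{Section 5.3}) that $z$ is implemented by exactly such a rotation (dragging one leg around the coupon). So part (b) is essentially immediate once part (a) and the planar graphical calculus are in place; one just has to check the two conventions (the definition of $z$ and the definition of $Z_{RT}(\Ga)$ via "clockwise order") match, which is a matter of orientation conventions.

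The main obstacle is part (a): making the "move the puncture across an edge" argument precise, i.e. verifying that every such elementary move is an isotopy of planar graphs followed by a single controlled application of the sphericality axiom, and that finitely many such moves connect any two choices of puncture. The subtlety is bookkeeping with orientations and with edges that are incident to the region containing the puncture in complicated ways (e.g. a vertex sitting right where the puncture crosses); I would handle this by first isotoping $\Ga$ on $S^2$ so that a small disk around the puncture meets $\Ga$ in a single unknotted arc with no vertices, which is always possible, and only then projecting — this localizes the move to the clean situation of one strand near infinity. Everything else (invariance under planar isotopy, the value $d_X$ of a small loop, semisimplicity) is already available from the cited references and from \leref{l:pairing}.
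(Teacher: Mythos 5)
The paper does not actually prove this statement: it is quoted from Barrett--Westbury \ocite{barrett} (see also \ocite{BK}), so there is no in-paper argument to compare against. Your sketch is the standard proof from those references and is correct in outline. Two points to tighten. First, two candidate puncture points generally lie in \emph{different} faces of $S^2\setminus\Ga$, so they cannot be joined by a path avoiding $\Ga$; what you need is a path in general position, avoiding vertices and crossing edges transversally in finitely many points, which reduces everything to the single move of pushing the puncture across one edge. Your subsequent discussion makes clear this is what you intend, but the sentence as written is self-contradictory. Second, since $Z_{RT}(\Ga)$ is a closed diagram, the elementary move of pulling the outermost strand, colored by $V(e)$, around the other side of the plane is literally the equality $\tr_L(f)=\tr_R(f)$ for a single endomorphism $f\in\End(V(e))$ obtained by cutting the planar diagram along $e$; the partial-trace identity you invoke for $f\in\End(V\otimes W)$ is a strictly stronger statement (it does hold in a spherical fusion category, but proving it uses semisimplicity and nondegeneracy of the trace pairing), and you do not need it here. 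With those adjustments, part (a) is the sphericality axiom applied once per edge crossing of the generic path, and part (b) is exactly the statement $z^n=\id$ for the cyclic isomorphism \eqref{e:cyclic}, as you say; the only remaining check is the compatibility of the clockwise/counterclockwise conventions, which you correctly flag.
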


\section{Preliminaries II: Drinfeld Center}\label{s:center}

We will also need the notion of Drinfeld center of a spherical fusion
category. Recall that the Drinfeld center $Z(\C)$ of a fusion category $\C$
is defined as the category whose objects are pairs $(Y,\ph_Y)$, where $Y$
is an object of $\C$ and $\ph_Y$ -- a functorial isomorphism $Y\otimes -\to
-\otimes Y$ satisfying certain compatibility conditions (see
\ocite{muger1}).

As before, we will frequently use graphical presentation of morphisms
which involve objects both of $\C$ and $Z(\C)$. In these diagrams, we
will show objects of $Z(C)$ by double green lines   and the
half-braiding isomorphism $\ph_Y\colon Y\otimes V\to V\otimes Y$  by
crossing as in \firef{f:crossing}.
\begin{figure}[ht]
\tzCrossing
\caption{Graphical presentation of the half-braiding $\ph_Y\colon Y\otimes
V\to V\otimes Y$, $Y\in \Obj Z(\C)$, $V\in \Obj \C$}\label{f:crossing}
\end{figure}

We list here main properties of $Z(\C)$, all under the assumption that $\C$
is a spherical fusion category over an algebraically closed field of
characteristic zero.

\begin{theorem}{\ocite{muger2}}
 $Z(\C)$ is a modular category; in particular, it is
    semisimple with finitely many simple objects, it is braided and has
    a pivotal structure which coincides with the pivotal structure on
    $\C$. 
\end{theorem}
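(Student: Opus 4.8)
The statement bundles four assertions about $Z(\C)$ — that it is braided, that it is a finite semisimple fusion category with finitely many simples, that its pivotal structure is the one induced from $\C$, and that its braiding is non-degenerate — and only the last is genuinely hard; the proof I have in mind is the one due to M\"uger and to \ocite{ENO}. For the braiding: given objects $(Y,\ga_Y),(Y',\ga_{Y'})$ of $Z(\C)$, one sets $c_{(Y,\ga_Y),(Y',\ga_{Y'})}:=(\ga_{Y'})_Y\colon Y\ttt Y'\to Y'\ttt Y$, which is exactly the crossing of \firef{f:crossing}; the naturality and the two hexagon-type compatibility axioms packaged into the definition of a half-braiding translate termwise into naturality and the hexagon axioms for $c$. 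In particular the forgetful functor $U\colon Z(\C)\to\C$ is a strict monoidal functor. This is a formal unwinding of definitions.

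For fusion: $Z(\C)$ is $\kk$-linear, abelian, and rigid — if $(Y,\ga_Y)\in Z(\C)$ then $Y^*$ acquires a canonical half-braiding from $\ga_Y$ by bending strands with $\ev,\coev$ of $\C$, and the unit, evaluation and coevaluation of $\C$ lift to $Z(\C)$. Semisimplicity and finiteness come from the induction functor $I\colon\C\to Z(\C)$, a two-sided adjoint of $U$, with $UI(X)\simeq\bigoplus_{i\in\Irr(\C)}X_i\ttt X\ttt X_i^*$. Since $\C$ is fusion, $UI$ is exact, $\Hom_{Z(\C)}(I(X),I(X'))\simeq\Hom_\C(X,UI(X'))$ is finite-dimensional, and every object of $Z(\C)$ is a subquotient of some $I(X)$ (apply $I$ to an epi or mono in $\C$ and use the adjunction). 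Exactness of $UI$ plus faithfulness of $U$ forces short exact sequences in $Z(\C)$ to split, so $Z(\C)$ is semisimple; each $I(X_i)$ has finite length and every simple of $Z(\C)$ is a summand of some $I(X_i)$, so there are finitely many simples.

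For the pivotal (spherical) structure: because $\C$ is strict pivotal ($V^{**}=V$) and the half-braiding on $Y^{**}$ built from $\ga_Y$ as above coincides with $\ga_Y$, the double-dual functor on $Z(\C)$ is the identity, so the identity morphisms give a strict pivotal structure on $Z(\C)$ and $U$ is a pivotal functor. Hence $\dim_{Z(\C)}(Y,\ga_Y)=\dim_\C Y$, and more generally the left and right categorical traces of a $Z(\C)$-endomorphism are computed after applying $U$; sphericity of $\C$ then gives sphericity of $Z(\C)$. Thus $Z(\C)$ is a spherical fusion category, and only non-degeneracy of $c$ remains.

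Non-degeneracy is the real obstacle, and it is the only point where characteristic zero is used — through $\dim\C:=\sum_{i\in\Irr(\C)}d_{X_i}^2=\DD^2\ne 0$, the result of \ocite{ENO} recalled at \eqref{e:DD}. The target is to show the M\"uger center $Z_2(Z(\C))$ — the full subcategory of objects $Y$ with $c_{-,Y}\circ c_{Y,-}=\id$ — is $\simeq\Vect$, equivalently that the $S$-matrix is invertible. The plan: (i) compute $\dim Z(\C)=(\dim\C)^2$, using that $\C$ is an indecomposable module category over $\C\boxtimes\C^{\mathrm{op}}$ (acting by left and right tensoring), that $Z(\C)\simeq\mathrm{Fun}_{\C\boxtimes\C^{\mathrm{op}}}(\C,\C)$, and that the category of module endofunctors of an indecomposable module category has the same global dimension as the acting category; (ii) observe that $A:=I(\one)\in Z(\C)$ is a connected commutative separable algebra with $\dim A=\dim\C=\sqrt{\dim Z(\C)}$, with $Z(\C)_A\simeq\C$ as fusion categories and only the unit object local — a Lagrangian algebra; (iii) run the M\"uger centralizer dimension identity $\dim\mathcal D\cdot\dim C_{Z(\C)}(\mathcal D)=\dim Z(\C)\cdot\dim(\mathcal D\cap Z_2(Z(\C)))$ against the two mutually centralizing copies of $\C$ inside $Z(\C)$ (induction with the half-braiding and with its inverse) to force $Z_2(Z(\C))=\Vect$. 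Step (iii) — actually extracting non-degeneracy from the dimension bookkeeping — is where the content lies; everything preceding it is formal, and the ground field intervenes only through $\DD\ne 0$.
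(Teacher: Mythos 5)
This theorem is not proved in the paper at all: it is imported wholesale from M\"uger's work (and \ocite{ENO}), so there is no in-paper argument to compare against. Your outline is a reasonable reconstruction of the standard M\"uger/ENO route (braiding from the half-braidings, fusion via the induction functor $I$ adjoint to the forgetful functor as in \thref{t:I}, strict pivotality inherited from $\C$, and non-degeneracy via $\dim Z(\C)=(\dim\C)^2$ together with a centralizer/Lagrangian-algebra argument). Two points need attention, one of them a genuine gap.

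The gap is in the semisimplicity step. From ``$UI$ is exact and $U$ is faithful'' you cannot conclude that short exact sequences in $Z(\C)$ split: a faithful exact functor to a semisimple category does not force semisimplicity of the source (the forgetful functor $\Rep(A)\to\Vect$ for a non-semisimple algebra $A$ is a counterexample to that inference). The correct argument is that every $Y\in\Obj Z(\C)$ is a \emph{direct summand} of $I(U(Y))$, because the composite $Y\to IU(Y)\to Y$ of the unit of one adjunction with the counit of the other equals $\dim\C\cdot\id_Y$ (this is exactly the content of the averaging projector $P$ of \leref{l:projector}, with its $\DD^{-2}$ normalization), and this is invertible precisely because $\DD^2=\dim\C\ne 0$. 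Since $I$ sends projectives to projectives and $\C$ is semisimple, every object of $Z(\C)$ is then projective. Consequently your claim that characteristic zero (through $\DD\ne0$) enters \emph{only} at the non-degeneracy step is also inaccurate: it is already needed to make $Z(\C)$ semisimple. A minor separate slip: with the paper's convention $\ph_Y\colon Y\ttt V\to V\ttt Y$, the braiding $Y\ttt Y'\to Y'\ttt Y$ is $(\ph_Y)_{Y'}$, i.e.\ the half-braiding of the \emph{first} factor evaluated at the second; as written, $(\ph_{Y'})_Y$ has source $Y'\ttt Y$ and does not type-check. The non-degeneracy outline (i)--(iii) is the right shape but is stated at the level of quoting the needed theorems ($\dim Z(\C)=(\dim\C)^2$, the centralizer dimension identity) rather than proving them; that is acceptable for a citation-level statement such as this one, but be aware that step (iii) is where essentially all of the work in \ocite{muger2} and \ocite{ENO} lives.
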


We have an obvious forgetful functor $F\colon Z(\C)\to \C$. To simplify
the notation, we will frequently omit it in the formulas, writing for
example $\Hom_\C(Y,V)$ instead of $\Hom_\C(F(Y),V)$, for $Y\in \Obj
Z(\C)$, $V\in \Obj \C$. Note, however, that if $Y,Z\in \Obj Z(\C)$, then
$\Hom_{Z(\C)}(Y,Z)$ is different from $\Hom_{\C}(Y,Z)$: namely,
$\Hom_{Z(\C)}(Y,Z)$ is a subspace in $\Hom_{\C}(Y,Z)$ consisting of those
morphisms that commute the with the half-braiding. The following lemma will
be useful in the future.

\begin{lemma}\label{l:projector}
  Let $Y,Z\in \Obj Z(\C)$. Define the operator 
  
  $P\colon \Hom_\C(Y,Z)\to
  \Hom_\C(Y,Z)$ by the following formula:
  $$
    P\psi=\frac{1}{\DD^2}\sum_{X\in \Irr(\C)} d_X\quad
    \tzProjectionI
  $$
  Then $P$ is a projector onto the subspace   
  $\Hom_{Z(\C)}(Y,Z)\subset   \Hom_\C(Y,Z)$. 
\end{lemma}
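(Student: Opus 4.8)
The plan is to verify two things: first, that $P$ lands in $\Hom_{Z(\C)}(Y,Z)$, i.e. that $P\psi$ commutes with the half-braidings for all $\psi$; and second, that $P$ restricts to the identity on $\Hom_{Z(\C)}(Y,Z)$. Together with the obvious fact that $P$ is linear, these give that $P$ is a projector with the claimed image. The whole argument is graphical, using the calculus set up in \seref{s:prelim} and \seref{s:center}, together with the normalization $\DD^2=\sum_X d_X^2$ and the ``killing'' properties of the sum $\frac1{\DD^2}\sum_X d_X(\cdots)$ familiar from modular categories (here applied inside $\C$, with the encircling strand $X$ together with the two trivalent vertices $\ph,\ph^*$ implicit in the diagram $\tzProjectionI$ via the summation convention \eqref{e:summation_convention}).

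\emph{Step 1: $P\psi$ commutes with the half-braiding.} I would take the defining diagram for $P\psi$, attach an extra strand $W\in\Obj\C$ crossing over the $Y$ input and the $Z$ output via the half-braidings $\ph_Y$, $\ph_Z$ (the configurations $\tzProjectionII$ and $\tzProjectionIV$), and show these two diagrams are equal. The key move is that the strand $X$ encircling $\psi$ can be slid past the strand $W$: using the sphericity of $\C$ and the fact that in the sum $\frac1{\DD^2}\sum_X d_X$ the encircling loop behaves like ``the regular representation'' one may drag $W$ from one side of the $X$-loop to the other at the cost of re-expressing the vertices, and the half-braidings on $Y$ and $Z$ are exactly what is needed to absorb the difference. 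Concretely this is the standard computation that the ``average over $\Irr(\C)$ of encircling by $X$'' produces a morphism in the centralizer; I would organize it as a sequence of diagram equalities passing through the intermediate picture $\tzProjectionIII$, invoking \leref{l:pairing}(2) to slide the loop and the definition of the half-braiding on $Y\otimes Z^*$ (or equivalently on $Y$ and on $Z$ separately) to match up the two ends.

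\emph{Step 2: $P$ is the identity on $\Hom_{Z(\C)}(Y,Z)$.} If $\psi$ already commutes with all half-braidings, then in the diagram for $P\psi$ the encircling $X$-strand can be pulled off $\psi$: it only interacts with $\psi$ through the vertices, and when $\psi$ is central the strand $X$ slides freely around the coupon and can be contracted to an unknot, which evaluates to $d_X$. Summing, $P\psi=\frac1{\DD^2}\sum_{X\in\Irr(\C)}d_X\cdot d_X\,\psi=\frac{1}{\DD^2}\Bigl(\sum_X d_X^2\Bigr)\psi=\psi$ by \eqref{e:DD}. This uses \leref{l:pairing}(1) (to resolve the bubble created when the loop is pulled off a central morphism) and the normalization of $\DD$.

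\emph{Step 3: conclude.} From Step 1, $\Im P\subseteq\Hom_{Z(\C)}(Y,Z)$; from Step 2, $P|_{\Hom_{Z(\C)}(Y,Z)}=\id$; hence $P^2=P$ and $\Im P=\Hom_{Z(\C)}(Y,Z)$, so $P$ is the asserted projector. The main obstacle is Step 1: making the diagrammatic sliding of the encircling loop past the external strand $W$ fully rigorous, i.e. checking that the half-braiding structure morphisms $\ph_Y$ and $\ph_Z$ precisely account for the discrepancy produced when one moves $W$ across the averaged $X$-loop. This is where one must be careful that the half-braiding is functorial and satisfies the hexagon-type compatibility, and that the sphericity of $\C$ (\thref{t:spherical1}) legitimizes the isotopies on $S^2$ used along the way; Steps 2 and 3 are then essentially bookkeeping with the scalar $\DD$.
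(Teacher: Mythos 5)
Your proposal follows essentially the same route as the paper's proof: you show that $P$ restricts to the identity on $\Hom_{Z(\C)}(Y,Z)$ by sliding the encircling loop off a central $\psi$ and summing $\frac{1}{\DD^2}\sum_X d_X^2=1$, and you show $\Im P\subseteq\Hom_{Z(\C)}(Y,Z)$ by the same diagrammatic chain through $\tzProjectionII$, $\tzProjectionIII$, $\tzProjectionIV$ using \leref{l:pairing}(2). This is exactly the paper's argument, so the proposal is correct and not a genuinely different approach.
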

\begin{proof}It is immediate from the definition that if $\psi\in
  \Hom_{Z(\C)}(Y,Z)$, then $P\psi=\psi$. On the other hand, using
  \leref{l:pairing}, we get  that for any $\psi\in \Hom_\C(Y,Z)$, one has
  $(P\psi)\ph_Y=\ph_Z(P\psi)$:
  \begin{align*}
    (P\psi)\ph_Y &=
    \frac{1}{\DD^2}\sum_{j}d_{j}\quad\tzProjectionII=
  \frac{1}{\DD^2}\sum_{i,j}d_{i}d_{j}\quad \tzProjectionIII\\
  &=
\frac{1}{\DD^2}\sum_{i}d_i\quad \tzProjectionIV
  = \ph_Z(P\psi),
  \end{align*}
 (as before, we are using convention \eqref{e:summation_convention}). 
\end{proof}

The following theorem is a refinement of \ocite{ENO}*{Proposition 5.4}.
\begin{theorem}\label{t:I}
   Let $F\colon Z(\C)\to \C$ be the forgetful functor and $I\colon
  \C\to Z(\C)$ the (left) adjoint of $F$:
  $\Hom_{Z(\C)}(I(V),X)=\Hom_\C(V,F(X))$. Then for $V\in \Obj \C$, one
  has 
  \begin{equation}\label{e:I}
    I(V)=\bigoplus_{i\in \Irr(\C)} X_i\otimes V\otimes X_i^*
  \end{equation}
  with the half braiding given by
  \begin{figure}[ht]
    $$\bigoplus_{i,j\in \Irr (\C)} \sqrt{d_i}\sqrt{d_j} \quad
    \tzHalfBraid
    $$
    \caption{Half-braiding $I(V)\otimes
    W\to W\otimes I(V)$.}\label{f:I}
  \end{figure}
\end{theorem}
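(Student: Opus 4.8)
The statement has two parts: identifying the underlying object $F I(V) = \bigoplus_i X_i \otimes V \otimes X_i^*$, and verifying that the half-braiding displayed in \firef{f:I} is the correct one. I would begin with the object. By definition $I$ is left adjoint to the forgetful functor $F$, so $\Hom_{Z(\C)}(I(V), X) = \Hom_\C(V, F(X))$ naturally in $X \in \Obj Z(\C)$. To pin down $I(V)$ it suffices to exhibit an object $P(V) \in Z(\C)$ together with a morphism $V \to F(P(V))$ in $\C$ that is universal. I would take $P(V)$ to be $\bigoplus_{i \in \Irr(\C)} X_i \otimes V \otimes X_i^*$ as an object of $\C$, equipped with the half-braiding $\be_W \colon P(V) \otimes W \to W \otimes P(V)$ given in \firef{f:I}; the unit of the adjunction is the inclusion of the $i = 0$ summand $V = X_0 \otimes V \otimes X_0^* \hookrightarrow F(P(V))$ (recall $\sqrt{d_\one} = 1$). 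One then checks universality: given $X = (Z, \ph_Z) \in Z(\C)$ and $f \colon V \to Z$ in $\C$, the unique lift $\tilde f \colon P(V) \to X$ in $Z(\C)$ is built summand-by-summand from $f$ and the half-braiding $\ph_Z$, using duality $\Hom_\C(X_i \otimes V \otimes X_i^*, Z) \cong \Hom_\C(V, X_i^* \otimes Z \otimes X_i)$, and then invoking \leref{l:projector}: the condition that $\tilde f$ commute with half-braidings is exactly the statement that it lands in the image of the projector $P$, and semisimplicity (via \eqref{e:gluing1}) guarantees the resulting map is a bijection. This recovers \eqref{e:I}.

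For the half-braiding itself, I would first verify that the formula in \firef{f:I} genuinely defines a morphism $P(V) \otimes W \to W \otimes P(V)$ in $\C$ — this is just bookkeeping with the graphical calculus — and then check the two half-braiding axioms: functoriality in $W$ and the hexagon-type compatibility with tensor products $\be_{W_1 \otimes W_2} = (\id_{W_1} \otimes \be_{W_2})(\be_{W_1} \otimes \id_{W_2})$, together with the unit normalization $\be_\one = \id$. The normalization factors $\sqrt{d_i}\sqrt{d_j}$ and the summation convention \eqref{e:summation_convention} are precisely what make these axioms hold: the hexagon reduces, after sliding the $W$-strand through two consecutive coupons, to the gluing isomorphism \eqref{e:gluing1} applied to the internal edge, and the rescaled composition of \coref{c:composition2} is what makes the coefficients match — this is the point of having introduced $\ccc{X}$ and the compatibility identity \eqref{e:compatibility} earlier. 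Alternatively, and perhaps more cleanly, one can derive the half-braiding abstractly: since $I(V) \in Z(\C)$ carries a canonical half-braiding by virtue of being an object of the center, and since we have already identified its underlying object, it remains only to compute that canonical half-braiding explicitly, which one does by testing against morphisms out of $I(V)$ using the adjunction and \leref{l:pairing}(2) (the "sliding a strand around the sphere" identity).

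\textbf{Main obstacle.} The conceptual content is routine once the setup is right; the real work is the graphical bookkeeping — tracking the $\sqrt{d_i}$ normalizations through the hexagon axiom and confirming the coefficients in \firef{f:I} are forced (not merely consistent). The cleanest route to avoid a morass of diagrams is to lean on the projector description: prove $I(V) = \mathrm{image}$ of $P$ acting on $\bigoplus_i \Hom$-spaces, so that \leref{l:projector} does the heavy lifting and the half-braiding formula emerges as the image of the identity-like element under $P$. The one genuinely delicate check is that the claimed unit morphism is universal — i.e. that every $Z(\C)$-morphism out of the proposed $I(V)$ is detected by its restriction to the $i=0$ summand — which is where semisimplicity of $\C$ and the non-degeneracy of the pairing \eqref{e:pairing} are essential, and where I would expect to spend the most care.
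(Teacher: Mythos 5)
Your proposal is correct and follows essentially the same route as the paper: equip $\bigoplus_i X_i\otimes V\otimes X_i^*$ with the half-braiding of \firef{f:I} (verified via \leref{l:pairing}), take the unit to be the inclusion of the $i=\one$ summand, and establish the adjunction isomorphism $\Hom_{Z(\C)}(I(V),Z)\simeq\Hom_\C(V,Z)$ by exhibiting the explicit inverse built from $\sqrt{d_i}$ and the half-braiding of $Z$, with \leref{l:projector} supplying the key step that the two compositions are the identity. The only cosmetic difference is that you phrase the conclusion as universality of the unit morphism while the paper directly writes down the two mutually inverse maps; these are the same argument.
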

Note that instead of normalizing factor $\sqrt{d_i}\sqrt{d_j}$ we could
have used $d_i$ or $d_j$ ---  each of this would give an equivalent
definition.
\begin{proof}
Denote $Y=\bigoplus_{i\in \Irr(\C)} X_i\otimes V\otimes X_i^*$. It follows
from \leref{l:pairing} that the morphisms $Y\otimes W\to W\otimes Y$
defined by \firef{f:I} satisfy the compatibility relations required of half
braiding and thus define on $Y$ a structure of an object of $Z(\C)$. Now,
define for any $Z\in \Obj Z(\C)$, maps
\begin{align*}
\Hom_{Z(\C)}(Y,Z)&\to\Hom_\C(V,Z)\\
\Psi&\mapsto \Psi\circ P_0=\quad 
\tzFunctI
\end{align*}
where $P_0$ is the embedding $V=\one\otimes V\otimes \one\to Y=\bigoplus
X_i\otimes V\otimes X_i^*$
 and
\begin{align*}
\Hom_\C(V,Z)&\to\Hom_{Z(\C)}(Y,Z)\\
\Phi&\mapsto \bigoplus_{i\in\Irr(\C)}\sqrt{d_i} \tzFunctII
\end{align*}

It follows from \leref{l:projector} that these two maps are inverse to
each other. Composition in one direction is easy. First suppose 
$\Phi\in\Hom_\C(V,Z)$. The computation is shown below. 
$$
\Phi\to\bigoplus_{i}\sqrt{d_i}\quad \tzFunctII \to \Phi.
$$

The composition in opposite order is  as follows:
$$\Psi \to \tzFunctI
 \to
\bigoplus_{i}\sqrt{d_{i}} \tzFunctIII=
\bigoplus_{i} \sqrt{d_{i}} \sqrt{d_{j}} \tzFunctIV=\tzFunctV
$$

The first equality holds by functoriality of the half-braiding and
\firef{f:I}. The second equality is obvious. 
Therefore, the two maps are inverses to one another and we have
$\Hom_{Z(\C)}(Y,Z)=\Hom_\C(V,Z)$; thus, $Y=I(V)$. 
\end{proof}

An easy generalization of \thref{t:spherical1} allows us to consider
graphs in which some of the edges are labeled by objects of $Z(\C)$.

Let $\hat\Ga$ be a graph which consists of a usual graph $\Ga$ embedded in
$S^2$ as in \thref{t:spherical1} and a finite collection of
non-intersecting oriented  arcs  $\ga_i$ such that endpoints of each arc
$\ga$  are vertices of graph $\Ga$, and each vertex has a neighborhood in
which arcs $\ga_i$ do not intersect edges  of $\Ga$; however, arcs $\ga_i$
are allowed to intersect edges of $\Ga$ away from vertices. Note that this
implies that for each vertex $v$, we have a natural cyclic
order on the set of all edges of $\hat\Ga$ (including arcs  $\ga_i$)
adjacent to $v$.

Let us color such diagram, labeling each edge of $\Ga$ by an object of
$\C$, each arc $\ga$  by an object of $Z(\C)$, and each vertex $v$ by a
vector $\ph_v\in \<V^\pm (e_1), \dots, V^{\pm}(e_n)\>$ where $e_1,\dots,
e_n$ are edges of $\hat\Ga$ adjacent to $v$ (including the arcs
$\ga_i$), and the signs are chosen as in \thref{t:spherical1}.

\begin{figure}[ht]
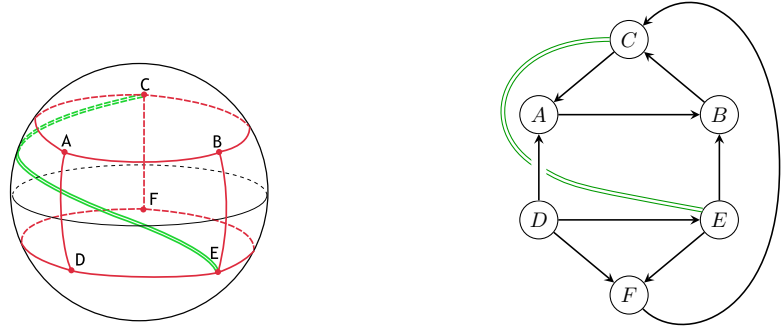

 \fig{figures/dual_graph3b}\qquad
\tzSphereDiagramII
\caption{Diagram $\hat\Ga$ on the sphere and its
flattening to the plane. Arc $\ga$ is shown
by a double line.}\label{f:sphere_diagram2}
\end{figure}

As before, by removing a point from $S^2$ and choosing a linear order of
edges (including the arcs) at every vertex, we get a diagram in the plane;
however, now the projections of  arcs $\ga_i$ can intersect edges of $\Ga$
as shown in \firef{f:sphere_diagram2}. Let us turn this into a tangle
diagram by replacing each intersection by a picture where the arch $\ga_i$
goes under the edges of $\Ga$, as shown in \firef{f:sphere_diagram2}.

Such a diagram defines a number $Z_{RT}(\hat\Ga)$ defined in the usual way,
with the extra convention shown in \firef{f:crossing}.

\begin{theorem}\label{t:spherical2}
  The number $Z_{RT}(\hat\Ga)\in \kk$ does not depend on the choice of a
  point to remove from $S^2$ orand thus defines an
  invariant of colored graphs on the sphere. Moreover, this number is
  invariant under homotopy of arcs $\ga_i$. 
\end{theorem}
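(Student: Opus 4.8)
The plan is to reduce \thref{t:spherical2} to the already-established invariance results and the properties of the half-braiding. The statement has two parts: (i) independence of the choice of point removed from $S^2$ (equivalently, of the linear order of edges and arcs at each vertex compatible with the cyclic order), and (ii) invariance under homotopy of the arcs $\ga_i$. I would treat (ii) first and then deduce (i) from it together with \thref{t:spherical1}.

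For the homotopy invariance, first I would observe that a generic homotopy of an arc $\ga$ (with endpoints fixed at vertices) decomposes into finitely many elementary moves: an isotopy that does not change the combinatorial picture (nothing to prove), a move in which a strand of $\ga$ is pushed across an edge $e$ of $\Ga$ creating or removing a pair of crossings (a Reidemeister-II-type move between the green arc and a black edge), a move in which $\ga$ is pushed across a trivalent (or $n$-valent) vertex $v$ of $\Ga$ — i.e. it slides past the coupon $\ph_v$ — and a move in which two arcs $\ga_i,\ga_j$ are pushed across each other. The second move is exactly the statement that the half-braiding $\ph_Y$ is a morphism, applied twice with opposite orientations: going under two parallel strands colored $V(e)$ and then coming back is the identity because $\ph_Y$ is an isomorphism with inverse given by the reverse crossing; this is the defining invertibility of the half-braiding as in \firef{f:crossing}. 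The third move — sliding the green arc past a coupon — is precisely the functoriality (naturality) of the half-braiding: $\ph_Y$ being a natural transformation $Y\otimes(-)\Rightarrow(-)\otimes Y$ means that for any morphism $f\colon U_1\otimes\dots\otimes U_k\to W_1\otimes\dots\otimes W_m$ (here $f=\ph_v$, viewed via the round-coupon convention of \firef{f:round_coupon}), the composite $\ph_Y$ with $f$ on the $Y$-side equals $f$ with $\ph_Y$ on the other side; this is the same computation already used in the proof of \leref{l:projector} (the first equality there, "by functoriality of the half-braiding"). The fourth move, two green arcs crossing, follows from the hexagon axioms satisfied by the half-braidings, i.e. the braiding on $Z(\C)$ — but in fact on $S^2$ it can also be routed around the point at infinity, so for the sphere statement one may avoid it; I would note it is handled by the braid relations in $Z(\C)$, which hold since $Z(\C)$ is braided.

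For part (i), once homotopy invariance of the arcs is known, I would argue as follows. Removing a different point $p'$ from $S^2$ changes the planar picture by an ambient isotopy of $S^2\setminus\{p,p'\}$; the black graph $\Ga$ alone is handled by \thref{t:spherical1}, and the only new feature is that the arcs $\ga_i$ get dragged around, possibly acquiring new crossings with edges of $\Ga$ or with each other. But dragging an arc across $S^2$ is a homotopy of that arc (the endpoints stay at their vertices), so by part (ii) the value $Z_{RT}(\hat\Ga)$ is unchanged. Similarly, changing the linear order of edges at a vertex among orders compatible with the cyclic order is realized by rotating the round coupon $\ph_v$, which is exactly the cyclic-rotation isomorphism \eqref{e:cyclic} with $z^n=\id$; the arcs adjacent to $v$ rotate along with it and this is again covered either directly by the sphere invariance of the black part or by a small homotopy. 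So (i) follows from (ii) plus \thref{t:spherical1}.

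The main obstacle, and the step I would spend the most care on, is the \emph{arc-past-vertex} move: making precise that sliding the green strand past an arbitrary round coupon $\ph_v$ is legitimate. The subtlety is that the coupon may have many legs, some incoming and some outgoing, and one must check that pulling the half-braiding through all of them at once is consistent — this is where one genuinely uses that $\ph_Y$ is a \emph{functorial} family of isomorphisms compatible with tensor products (the hexagon-type compatibility in the definition of $Z(\C)$), not just an isomorphism for each single object. I would reduce an $n$-legged coupon to the trivalent case by the round-to-rectangular translation of \firef{f:round_coupon} and then invoke naturality leg-by-leg; the bookkeeping of orientations (using $V^\pm$ on incoming vs. outgoing edges) is the place where an error is easiest to make. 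Everything else — the Reidemeister-II move and the reduction of (i) to (ii) — is routine given \thref{t:spherical1} and the invertibility of the half-braiding.
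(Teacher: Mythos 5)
Your treatment of homotopy invariance (part (ii)) is essentially the paper's: the paper simply points to the compatibility conditions on the half-braiding (\firef{f:homotopy}), which is exactly your arc-past-coupon move, and your decomposition into elementary moves together with the invertibility and naturality of $\ph_Y$ is the right justification. (The arc-past-arc move does not actually arise, since the arcs $\ga_i$ are required to be pairwise non-intersecting, so you can drop that case entirely.)

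For part (i), however, you take a genuinely different route from the paper, and as written it has a gap. The paper's argument is a one-liner: replace every crossing of an arc $\ga_i$ with an edge of $\Ga$ by a genuine $4$-valent coupon colored by the half-braiding morphism $\ph_Y\colon Y\otimes V\to V\otimes Y$, which is after all just a morphism in $\C$. The result is an ordinary colored graph on $S^2$ with no crossings at all, to which \thref{t:spherical1} applies verbatim; independence of the removed point is immediate and does not use (ii) at all. Your argument instead tries to realize the change of removed point by dragging arcs around (covered by (ii)) plus ``\thref{t:spherical1} for the black graph alone.'' That last step is not licit: \thref{t:spherical1} is a statement about graphs with no arcs, and you cannot apply it to the $\Ga$-part of a diagram while green arcs are still woven through it --- in particular when the portion pulled through infinity is an edge crossed by arcs, or a vertex to which arcs are attached. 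The coupon-replacement observation repairs this instantly (and makes the homotopy-first ordering unnecessary for part (i)), so you should lead with it.
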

\begin{proof}
The fact that it is independent of the choice of point to remove and thus
is an invariant of a graph on the sphere immediately follows from
\thref{t:spherical1}: replacing every crossing by a coupon colored by
half-braiding $\ph_Y$ gives a graph as in \thref{t:spherical1}. Invariance
under homotopy of arcs $\ga$ follows from compatibility conditions on
half-braiding shown in \firef{f:homotopy}.
\begin{figure}[ht]
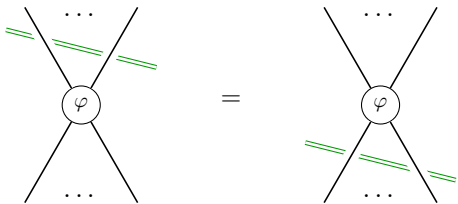

\qquad \tzHomotopyI\qquad =\qquad \tzHomotopyII
\caption{Invariance under homotopy}\label{f:homotopy}
\end{figure}

\end{proof}

Finally, we will need one more useful construction.

For any  $Y\in \Obj Z(\C)$, we define a functor
functor $\C^{\boxtimes n}\to \Vect$ by
\begin{equation}\label{e:vev-Z}
 \<V_1,\dots,V_n\>_Y=\Hom_\C(\one,Y\otimes  V_1\otimes\dots\otimes V_n)
\end{equation}
for any collection $V_1,\dots, V_n$ of objects of $\C$. As before, we have
functorial isomorphisms 
\begin{equation}\label{e:cyclic-Z}
 z_Y\colon\<V_1,\dots,V_n\>_Y\simeq \<V_n, V_1,\dots,V_{n-1}\>_Y
\end{equation}
obtained as composition 
$$
\<Y,V_1,\dots,V_n\>\to \<V_n, Y, V_1,\dots,V_{n-1}\>\to 
\<Y, V_n, V_1,\dots,V_{n-1}\>
$$
(the first isomorphism is the cyclic isomorphism \eqref{e:cyclic}, the
second one is the inverse of half-braiding $\ph_Y$). Note however that
in general  we do not have $z_Y^n=\id$. 

\section{Polytope decompositions}\label{s:polytope}

It will be convenient to rewrite the definition of Turaev--Viro (TV) 
invariants using not just triangulations, but more general cellular
decompositions. 
In this section we give precise definitions of these decompositions.

In what follows, the word ``manifold'' denotes a compact, oriented,
piecewise-linear (PL) manifold; unless otherwise specified, we assume that
it has no boundary. Note that in dimensions 2 and 3, the category of PL
manifolds is equivalent to the category of topological manifolds. For an 
oriented manifold $M$, we will denote by $\ov{M}$ the same manifold with
opposite orientation, and by $\del M$, the boundary of $M$ with induced
orientation. 

Instead of triangulated manifolds as in \ocite{barrett}, we prefer to
consider more general cellular decompositions, allowing individual
cells to be arbitrary polytopes (rather than just simplices); moreover,  
we will allow the attaching maps to identify some of the boundary points,
for example gluing polytopes so that some of the vertices coincide.
On the other hand, we do not want to consider arbitrary cell
decompositions (as is done, say, in \ocite{oeckl}), since it would make
describing the  elementary moves between two such decompositions more
complicated.  The following definition is the compromise; for lack of a
better word,   we will call such decompositions {\em polytope
decompositions}.

Recall that a cellular decomposition of a manifold $M$ is a collection 
of inclusion maps $B^d\to M$, where $B^d$ is the (open) $d$-dimensional
ball, satisfying certain conditions. Equivalently, we can replace
$d$-dimensional balls with $d$-dimensional cubes $I^d=(0,1)^d$. For a PL
manifold, we will call such a cellular decomposition a PL  decomposition 
if each inclusion map  $(0,1)^d\to M$ is a PL map. In particular, every
triangulation of a PL  manifold gives such a cellular decomposition (each
$d$-dimensional simplex is PL homeomorphic  to a $d$-dimensional cube).  

We will call a cell {\em regular} if the corresponding map $(0,1)^d\to M$
extends to a map of the closed cube $[0,1]^d\to M$ which is a
homeomorphism onto its image. 

\begin{definition}\label{d:polytope_d}
 A polytope decomposition of a  2- or 3-dimensional PL manifold $M$
(possibly with boundary) is a cellular   decomposition which can be
obtained from a triangulation by a sequence of moves 
 M1---M3 below (for $\dim M=2$, only moves M1, M2).
 
\begin{description}
 \item[M1: removing a vertex]
   Let $v$ be a vertex which has a neighborhood whose intersection with 
   the 2-skeleton is homeomorphic to the ``open book'' shown below with
    $k\ge 1$     leaves;    moreover, assume that all leaves in the
    figure are distinct 2-cells and the two
    1-cells  are also  distinct (i.e., not two ends of the same
    edge). Then move M1 removes vertex $v$ and replaces two 1-cells
  adjacent to it with a single 1-cell.
   \begin{figure}[ht]
   $$\figscale{0.7}{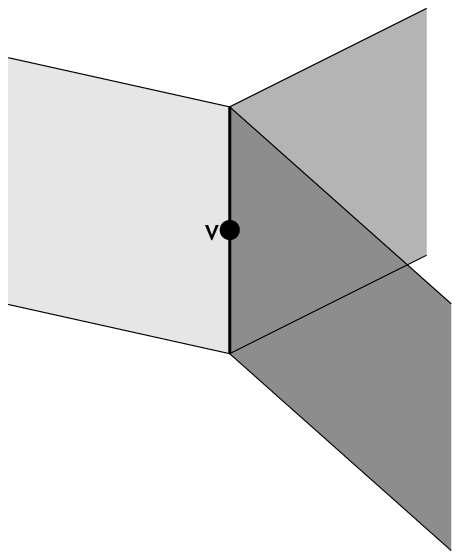}
     \xxto{\text{remove vertex } v}
     \figscale{0.7}{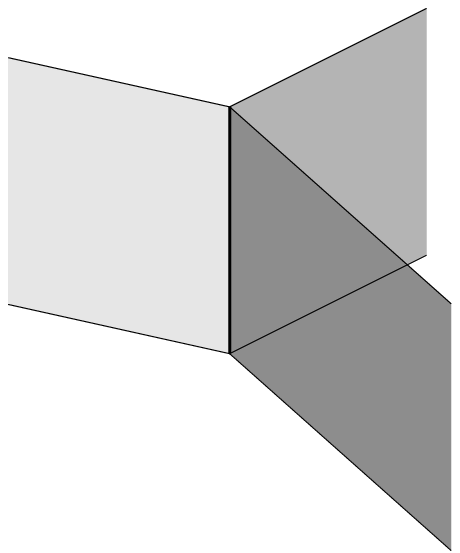}
   $$
  \caption{Move M1}\label{f:movef1}
  \end{figure}

 \item[M2: removing an edge]
   Let $e$ be a 1-cell which is regular and which is 
   adjacent to exactly two distinct 2-cells $c_1, c_2$ as shown in the
    figure below.  Then the move M2 removes the edge $e$ and replaces
  the cells $c_1,c_2$ with a  single cell $c$. 
   \begin{figure}[ht]
      $$\figscale{0.7}{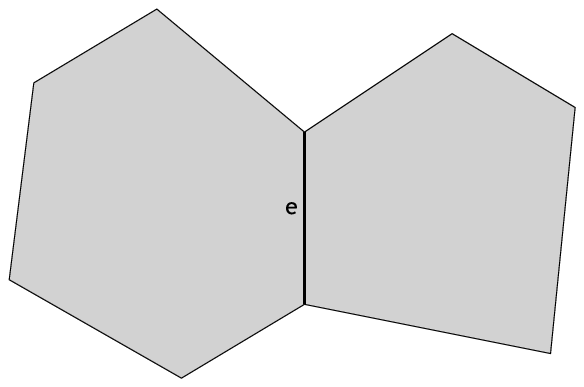}
     \xxto{\text{remove edge } e}
     \figscale{0.7}{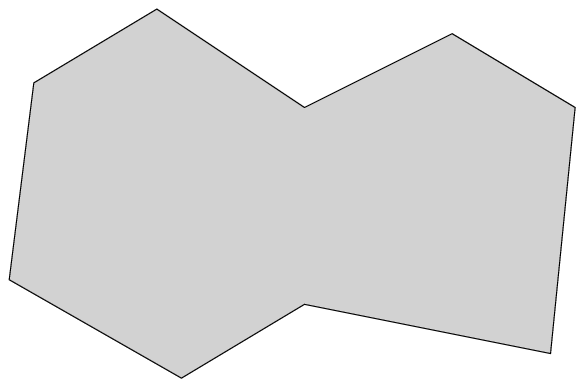}
   $$
  \caption{Move M2}\label{f:movef2}
  \end{figure}

 \item[M3: removing a 2-cell]
   Let $c$ be a 2-cell which is regular  and which is 
   adjacent to exactly two distinct 3-cells $F_1, F_2$ as shown in the
  figure below.  Then the move M2 removes the 2-cell $c$ and replaces the
  cells $F_1,F_2$ with a single cell $F$.
   \begin{figure}[ht]
   $$\figscale{0.7}{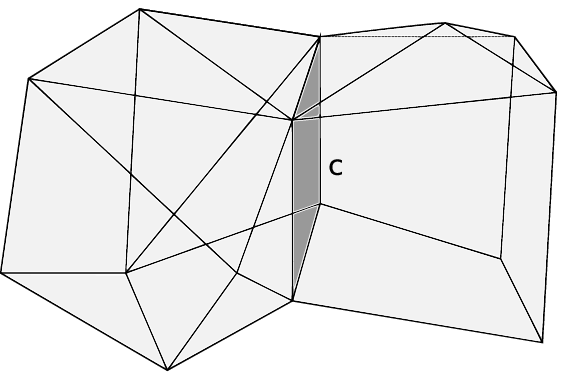}
     \xxto{\text{remove face } c}
     \figscale{0.7}{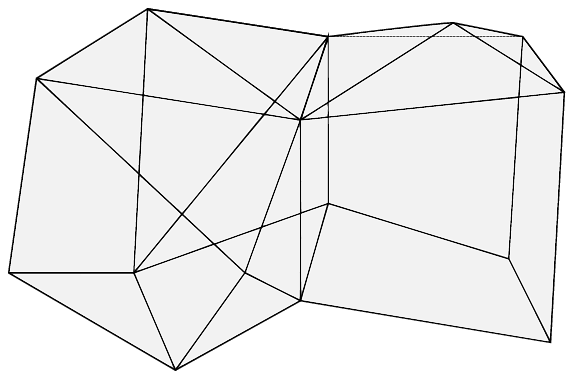}
   $$
  \caption{Move M3}\label{f:movef3}
  \end{figure}

\end{description}

A 2 or 3-dimensional PL manifold $M$ with boundary together with a choice
of polytope decomposition will be called a {\em combinatorial manifold};
for $\dim M=2$, we will also use the term ``combinatorial surface''. We
will use  script letters to denote combinatorial manifolds and Roman
letters for underlying PL manifolds.  
\end{definition}

Note that the extension of the  inclusion maps $(0,1)^d\to M$ to the 
boundary does not have to be injective. 

If $F$ is an oriented $d$-dimensional cell of a combinatorial manifold $\M$
(i.e., a pair consisting of a cell and its orientation), we can define
its boundary $\del F$ in the obvious way, as a formal union of oriented
$(d-1)$-dimensional cells. Note that $\del F$ can contain the same 
(unoriented) cell $C$ more than once: for example, one could have 
$\del F=\dots \cup C\cup \ov{C}\dots$. 
\begin{lemma}\label{l:delF}
If $\M$ is a combinatorial manifold  of dimension $d$ with boundary, then 
$$
\bigcup_{F}\del F=\Bigl(\bigcup_{C\in \del M} C\Bigr)
   \cup
     \Bigl(\bigcup_{c_{in}} c'_{in}\cup c''_{in}     \Bigr)
$$
where $F$ runs over the set of $d$-cells of $M$ \textup{(}each taken with
induced orientation\textup{)}, $C$ runs over the set of $(d-1)$-cells of
$\del M$ \textup{(}each taken with induced orientation\textup{)}, and
$c_{in}$ runs over the set of \textup{(}unoriented\textup{)} $(d-1)$-cells
in the interior of $M$, with $c', c''$ denoting two possible orientations
of $c$ \textup{(}so that $\ov{c'}=c''$\textup{)}. 
\end{lemma}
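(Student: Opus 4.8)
The statement is a purely combinatorial bookkeeping identity about the cells of a combinatorial $d$-manifold with boundary, so the natural approach is to verify it first for a triangulation (where it is essentially the definition of a manifold-with-boundary via links of cells) and then check that it is preserved under each of the moves M1--M3. First I would fix an orientation of $M$ and recall the elementary local fact: for a PL $d$-manifold with boundary, a codimension-one cell $c$ lies in the boundary $\partial M$ of exactly one $d$-cell $F$ (with multiplicity one), while a codimension-one cell $c$ in the interior is a face of $d$-cells in such a way that, counting with orientations induced from a fixed orientation of $M$, it appears once as $c'$ and once as $c''=\overline{c'}$. For a genuine triangulation this is standard (each $(d-1)$-simplex is a face of one or two $d$-simplices according as it is on $\partial M$ or not, and the orientations match up by the orientability of $M$), and this is exactly the content of the claimed formula restricted to triangulations: on the right-hand side the boundary cells $C$ account for the one-sided faces and the pairs $c'_{in}\cup c''_{in}$ account for the two-sided interior faces.

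**Reduction to moves.** By \deref{d:polytope_d}, every polytope decomposition is obtained from a triangulation by finitely many moves M1--M3, so it suffices to show that if the identity holds for $\M$ then it holds for $\M'$ obtained by one move. Here the key observation is that each move only affects cells of dimension $\le d$ in a controlled, local way, and — crucially for this lemma — moves M1 and M2 do not touch $d$-cells at all when $d=3$ (they modify the $0$- and $1$-skeleton, resp. the $1$- and $2$-skeleton), so for $d=3$ the only move that can change $\bigcup_F \partial F$ is M3. For $d=2$, only M1 and M2 occur, and M1 does not affect $2$-cells, so again only one move (M2) is relevant. Thus the whole verification comes down to a single local check.

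**The local check for M3 (resp. M2).** In move M3 a regular $2$-cell $c$, adjacent to exactly two distinct $3$-cells $F_1,F_2$, is deleted and $F_1,F_2$ are merged into one $3$-cell $F=F_1\cup_c F_2$. I would check: (i) the set of $2$-cells in $\partial M$ is unchanged, since $c$ is interior (adjacent to two distinct $3$-cells) and none of the boundary $2$-cells is altered; (ii) the multiset $\bigcup_{c_{in}}\{c'_{in},c''_{in}\}$ loses exactly the pair $\{c',c''\}$ corresponding to the deleted cell $c$; (iii) correspondingly, $\bigcup_F \partial F$ loses exactly one copy of $c'$ and one copy of $\overline{c'}=c''$, because $\partial F_1$ contributed $c$ with one induced orientation and $\partial F_2$ with the opposite one (they lie on opposite sides of $c$), and in $\partial F = \partial(F_1\cup_c F_2)$ these two occurrences cancel while every other face of $F_1$ or $F_2$ survives unchanged with its orientation. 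Hence both sides of the identity lose the same two oriented cells, and equality is preserved. The analogous computation for M2 is identical with dimensions shifted down by one: a regular interior $1$-cell $e$ adjacent to two distinct $2$-cells $c_1,c_2$ is deleted, $c_1,c_2$ merge, and the two occurrences of $e$ (with opposite induced orientations) in $\partial c_1$ and $\partial c_2$ cancel.

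**Main obstacle.** The genuinely delicate point, and the one I would write out most carefully, is the orientation sign in step (iii): one must be sure that the two $d$-cells meeting along an interior $(d-1)$-cell $c$ induce opposite orientations on $c$ (so that they cancel in $\partial F$ and match the pair $c'\cup c''$ on the right), rather than the same orientation. This is exactly where orientability of $M$ is used, and where the regularity hypothesis on the cell being removed matters — without regularity the attaching map of $F_1$ or $F_2$ could fold $c$ onto itself and the "two occurrences" count could fail. Everything else is routine local bookkeeping, and the passage from the triangulated case to the general case is immediate once the move-invariance is established.
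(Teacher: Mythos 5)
The paper states this lemma without proof, treating it as evident, so there is no argument of record to compare yours against; your strategy---verify the identity for a triangulation, where it reduces to the standard fact that in an oriented triangulated $d$-manifold an interior $(d-1)$-simplex is a face of exactly two $d$-simplices inducing opposite orientations while a boundary $(d-1)$-simplex is a face of exactly one, and then propagate the identity through the moves M1--M3 of \deref{d:polytope_d}---is the natural one and is essentially sound. One claim in your reduction step is inaccurate, though harmlessly so: it is not true that for $d=3$ only M3 can change $\bigcup_F\del F$, nor that for $d=2$ only M2 is relevant. Move M2 in dimension $3$ (and likewise M1 in dimension $2$) merges two $(d-1)$-cells $c_1,c_2$ into a single cell $c$, and since $\del F$ is recorded as a formal union of oriented $(d-1)$-cells, every occurrence of $c_1$ or $c_2$ in some $\del F$ is replaced by an occurrence of $c$, so the left-hand side does change. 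The correct observation is that the right-hand side changes in exactly the same way: the two cells $c_1,c_2$ necessarily bound the same $d$-cells (they are glued along the removed lower-dimensional cell into the single face $c$), so on the right the pair of interior cells each contributing both orientations, or the pair of boundary cells each contributing once, is replaced by the single merged cell with the same contributions, and the identity is preserved. With that one-sentence repair, together with your correctly identified orientation check for M3 (resp.\ M2 in dimension $2$)---namely that the two $d$-cells adjacent to a regular interior $(d-1)$-cell induce opposite orientations on it, which is where orientability and regularity enter---the argument is complete.
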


The main result of this section is the following theorem.

\begin{theorem}\label{t:moves}
  Let $M$ be a PL 2- or 3-manifold without boundary. Then any two
  polytope decompositions of $M$ can be obtained from each other by a
  finite sequence of moves M1--M3 and their inverses \textup{(}if $\dim
  M=2$, only     moves M1, M2 and their inverses\textup{)}.
\end{theorem}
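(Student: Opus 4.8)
The plan is to reduce the statement to the analogous fact for triangulations, namely Pachner's theorem, and then to realize each Pachner move by a finite sequence of moves M1--M3 and their inverses.

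First, the reduction. By \deref{d:polytope_d}, every polytope decomposition $\mathcal D$ of $M$ is obtained from some triangulation $T_{\mathcal D}$ by a finite sequence of forward moves M1--M3; reversing this sequence connects $\mathcal D$ to $T_{\mathcal D}$ by inverse moves. Hence, for two polytope decompositions $\mathcal D_1,\mathcal D_2$ it suffices to connect the triangulations $T_{\mathcal D_1}$ and $T_{\mathcal D_2}$ by moves M1--M3 and their inverses, since this yields the chain $\mathcal D_1 \rightsquigarrow T_{\mathcal D_1}\rightsquigarrow T_{\mathcal D_2}\rightsquigarrow \mathcal D_2$. (One should also check that the class of polytope decompositions is preserved by the inverse moves, so that the intermediate objects in such a chain are again polytope decompositions; this follows because any subdivision of a polytope decomposition can itself be obtained from a triangulation by forward moves, e.g.\ by first merging a fine triangulation up to the subdivided cells.) Because $M$ is closed, Pachner's theorem now applies: any two triangulations of $M$ are related by a finite sequence of bistellar (Pachner) moves.

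Second, the realizations. In dimension $2$ the Pachner moves are the $2$--$2$ move and the $1$--$3$ move together with its inverse; in dimension $3$ they are the $2$--$3$ and $1$--$4$ moves together with their inverses. Since the inverse of a Pachner move is again one, it is enough to realize the $2$--$2$ and $1$--$3$ moves when $\dim M=2$, and the $2$--$3$ and $1$--$4$ moves when $\dim M=3$, each supported in the ball in which the Pachner move takes place so that the rest of the decomposition is untouched. The $2$--$2$ move is immediate: move M2 merges the two triangles sharing the flipped edge into a single quadrilateral cell, and the inverse of M2 recuts that quadrilateral along the other diagonal. Each of the remaining moves changes the vertex and/or edge set and is realized by first \emph{subdividing} (using inverses of M1--M3) to pass to a decomposition refining both the ``before'' and the ``after'' picture, and then \emph{merging} (using M1--M3) down to the ``after'' picture. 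For instance, the $1$--$3$ move on a triangle $ABC$ can be realized by: subdividing an edge by an auxiliary vertex $P$ (inverse M1); introducing the new central vertex $v$ as the subdivision point of an interior edge (again inverse M1); cutting the resulting polygons by new diagonals (inverses of M2) until $P$, $v$, and the target triangles $ABv,BCv,CAv$ all appear; removing the auxiliary edges at $P$ (move M2); and finally removing $P$ itself (move M1). The $1$--$4$ move is handled the same way one dimension higher, using inverse M3 to cut the tetrahedra, and the $2$--$3$ move similarly; the latter unavoidably uses auxiliary subdivisions, since it introduces an edge joining two vertices that were previously non-adjacent and so cannot be created by any single M-move.

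The main obstacle is exactly this last point: organizing the subdivisions and mergers so that at every intermediate stage the precise hypotheses of M1--M3 hold --- that a $2$-cell being removed is regular and adjacent to exactly two \emph{distinct} $3$-cells, that an edge being removed is regular and adjacent to exactly two distinct $2$-cells, and that a vertex being removed sits in the ``open book'' configuration with distinct leaves and distinct incident $1$-cells. Concretely, one must exhibit for each of the finitely many types of Pachner moves an explicit common refinement of the two local pictures, all of whose intermediate decompositions are polytope decompositions differing by a single M-move. Once this is verified for each type, the theorem follows.
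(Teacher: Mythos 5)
Your proposal is correct and follows essentially the same route as the paper: reduce to triangulations via the definition of polytope decompositions, invoke Pachner's theorem for closed PL manifolds, and then realize each bistellar move as an explicit finite sequence of M1--M3 and their inverses (the paper exhibits these sequences for the 3--2 and 4--1 moves in Figures~\ref{f:pachner1} and~\ref{f:pachner2} and leaves the 2-dimensional case as an exercise, which matches your subdivide-and-merge realizations).
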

\begin{proof}
 It is immediate from the definition that it suffices to prove that any
  two triangulations can be obtained one from another by a sequence of
  moves M1--M3 and their inverses. On the other hand, since it is 
  known that any two triangulations  are related by a sequence of Pachner
  bistellar moves \ocite{pachner}, it suffices to show that each Pachner
  bistellar move can be  presented as a  sequence of moves M1--M3 and
  their inverses. For
  $\dim M=2$,   this is left as an easy exercise to the reader; for $\dim
  M=3$, this is   shown in \firef{f:pachner1}, \firef{f:pachner2}.
   \begin{figure}[ht]
   \begin{align*}&\figscale{0.7}{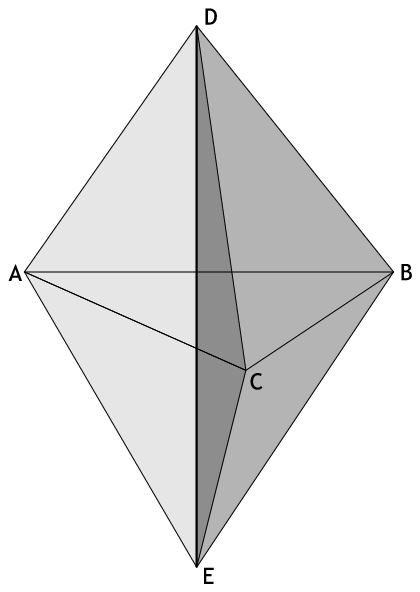}
     \xxto{\text{remove face } ADE}
    \figscale{0.7}{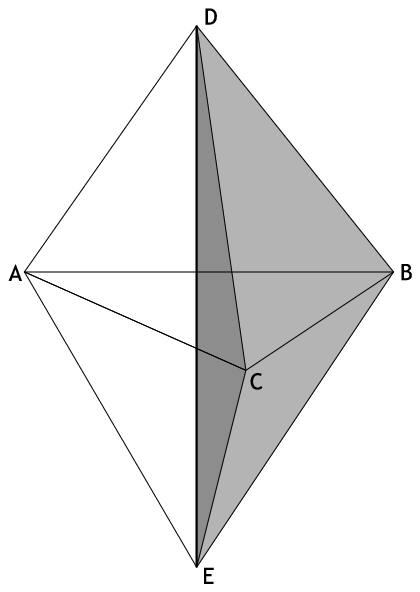}
     \xxto{\text{remove edge } DE}
     \figscale{0.7}{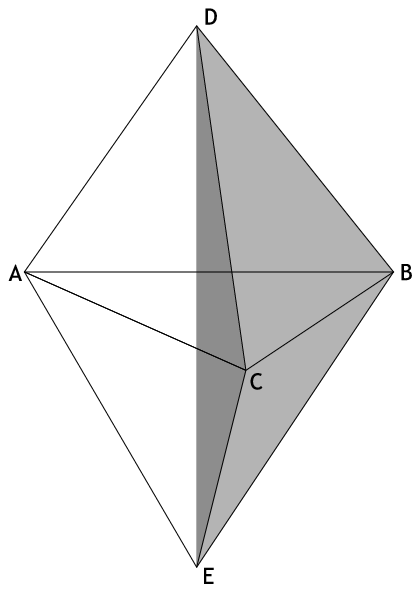}
     \\ 
   &  \xxto{\text{remove face } CDBE}  
   \figscale{0.7}{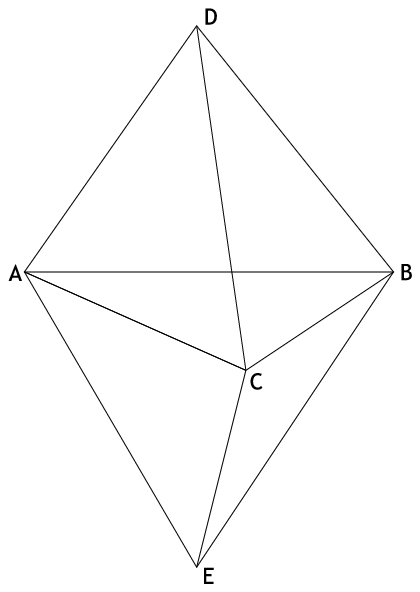} 
    \xxto{\text{add face }ABC}
      \figscale{0.7}{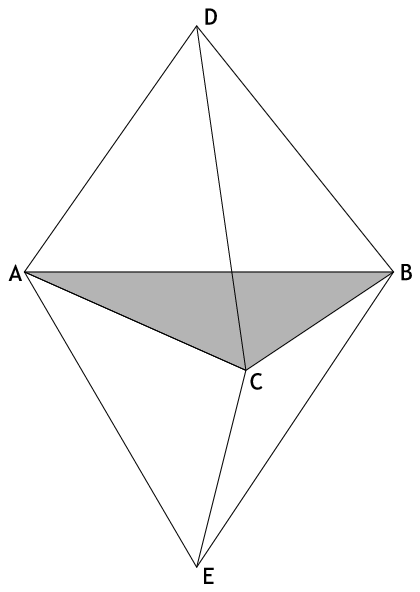}
    \end{align*}
  \caption{Pachner 3-2 move as composition of elementary
            moves}\label{f:pachner1}
  \end{figure}

   \begin{figure}[ht]
    \begin{align*}&\figscale{0.7}{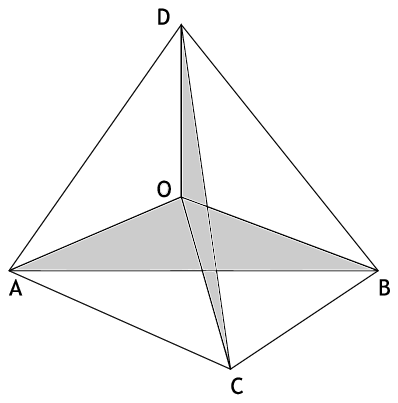}
       \xxto{\text{remove faces } AOB, DOC}
       \figscale{0.7}{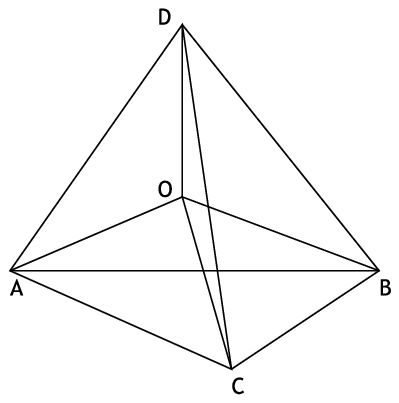}
    \\ 
   & \qquad \xxto{\text{remove edges } AO, OB}
     \figscale{0.7}{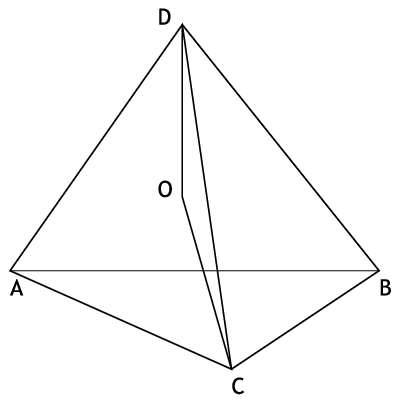}
      \xxto{\text{remove vertex } O}  
   \figscale{0.7}{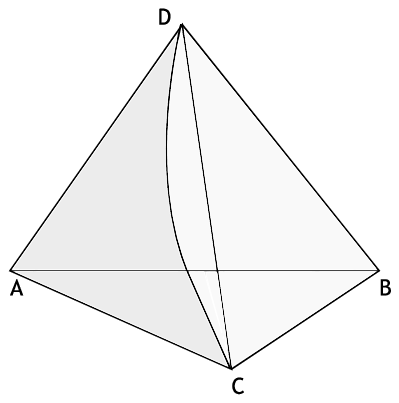} 
   \\
    & \qquad \xxto{\text{remove edge } DC}
      \figscale{0.7}{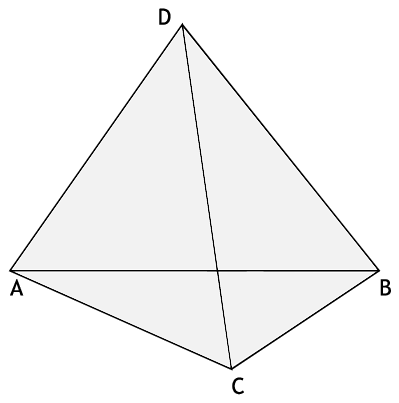}
      \xxto{\text{remove face } ADBC}
     \figscale{0.7}{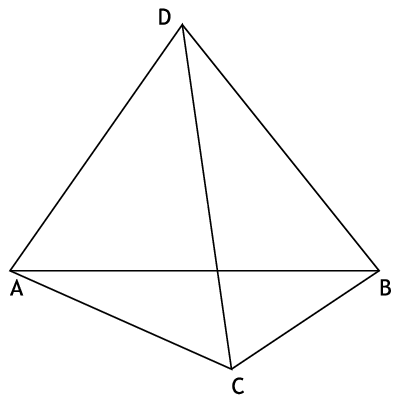}
    \end{align*}
      
\caption{Pachner 4-1 move as composition of elementary
            moves}\label{f:pachner2}
  \end{figure}

\end{proof}

This can be generalized to manifolds with boundary.

\begin{theorem}\label{t:moves_rel}
  Let $M$ be a PL 2- or 3-manifold with boundary and let $\N$ be a
  polytope decomposition of $\del M$. Then
  \begin{enumerate}
  \item $\N$ can be extended to a polytope decomposition $\M$ of $M$.
  \item Any two polytope decompositions $\M_1, \M_2$ of $M$ which coincide
  with $\N$ on  $\del M$ can be obtained from each other by  a finite
  sequence of moves M1--M3 and their  inverses which do not change the
  polytope decomposition of $\del M$.
  \end{enumerate}
\end{theorem}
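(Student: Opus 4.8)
The plan is to reduce both parts to standard PL facts about triangulations, exploiting that, by definition, every polytope decomposition is reachable from a triangulation by the local moves M1--M3. For part (1), I would first recall the classical fact that any triangulation $T_\del$ of $\del M$ extends to a triangulation of $M$ (one may subdivide and cone, or put $M$ in general position against a simplex). Since $\N$ is, by definition, obtained from some triangulation $T_\del$ of $\del M$ by a chain of surface moves M1, M2, say $T_\del=\N_0\to\N_1\to\cdots\to\N_r=\N$, it then suffices to prove a \emph{thickening lemma}: for any such chain relating polytope decompositions of a closed surface $P$, the prism triangulation of $P\times[0,1]$ can be transformed, by moves M1--M3 supported in the interior and not affecting $P\times\{1\}$, into a polytope decomposition restricting to $\N_0$ on $P\times\{1\}$ and to $\N_r$ on $P\times\{0\}$ --- a ``movie'' realizing the chain level by level, each single surface move being produced by a small local gadget. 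Granting the lemma: extend $T_\del$ to a triangulation $\TT$ of $M$, chosen so that on a collar $\del M\times[0,1]\hookrightarrow M$ it is the prism triangulation of $T_\del\times[0,1]$; replace that collar by the movie; since the movie is reached from the prism triangulation by moves, the resulting cellular decomposition $\M$ of $M$ is a polytope decomposition, and by construction $\del\M=\N$.

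For part (2), given $\M_1,\M_2$ with $\del\M_i=\N$, I would pick a PL common subdivision $\M^*$ of $\M_1$ and $\M_2$ which still induces $\N$ on $\del M$ --- this is possible precisely because the two boundary decompositions already agree, so the subdivision may be taken supported in $\Int M$ (and one may as well take it to be a triangulation there). Then $\M^*$ refines each $\M_i$ by a subdivision supported in the interior, and the second technical ingredient is that any such interior subdivision is realized by a sequence of moves M1--M3 and their inverses fixing $\del M$: one introduces the new $2$-cells (inverse M3), then the new edges (inverse M2), then the new vertices (inverse M1), ordering the steps so that at each stage the cell being split is regular and its two pieces are distinct. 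Hence $\M_1\rightsquigarrow\M^*\rightsquigarrow\M_2$ through moves fixing $\del M$, and every intermediate complex is a polytope decomposition since it is reached from $\M_i$ by moves.

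The main obstacle is the verification of these two technical lemmas --- the movie/thickening construction and the realization of interior subdivisions --- where one must check, move by move, the regularity and distinctness hypotheses in \deref{d:polytope_d}, and confirm that the collar and the common subdivision can be chosen so that all the gluings and all intermediate complexes are genuine polytope decompositions. Extending boundary triangulations over $M$ and the existence of PL common subdivisions are classical; once the two lemmas are in place, the rest is bookkeeping.
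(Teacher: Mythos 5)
Your part (1) is essentially the paper's own argument. The paper proves existence by induction on the chain of surface moves taking a triangulation of $\del M$ to $\N$: the triangulated base case is handled by citing the relative version of Pachner's theorem \ocite{casali}, and each boundary move is absorbed by gluing a thin ``cell plus $3$-cell'' gadget along the affected boundary region (see \firef{f:rel_moves2}), which is exactly your collar movie realized one frame at a time. So that half of the proposal is sound, modulo actually writing down the gadget and checking it is produced by inverse moves.

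Part (2) is where you genuinely diverge, and where there is a gap. The paper again reduces to the triangulated-boundary case (relative Pachner, via Casali) and propagates the statement along the boundary moves with the same collar gadget, as in \leref{l:moves_rel2}. You instead propose an Alexander-style argument: take a common interior refinement $\M^*$ and invoke a lemma that any subdivision supported in $\Int M$ is realized by inverses of M1--M3. That lemma is the entire difficulty, and the order of operations you propose cannot work: inverse M3 can only insert a $2$-cell whose boundary already consists of edges and vertices of the \emph{current} decomposition, so for any nontrivial subdivision (barycentric, say) the new $2$-cells---which are bounded by new edges and new vertices---cannot be introduced first. The opposite order fails as well, since inverse M1 only creates a vertex on an existing edge and inverse M2 only creates an edge inside an existing $2$-cell; a vertex in the interior of a $3$-cell is reachable only through an interleaved sequence (first an auxiliary $2$-cell through it, then an edge in that $2$-cell, then the vertex, then forward moves to delete the scaffolding), with the regularity and distinctness hypotheses of \deref{d:polytope_d} verified at every intermediate stage. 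Carrying this out for an arbitrary subdivision is a result of roughly the same weight as Pachner's theorem itself---precisely what the paper avoids by quoting Casali---so until this lemma is proved or replaced by that citation, your part (2) is incomplete. A secondary point: you also need the common refinement $\M^*$ (a triangulation in the interior glued to the unrefined $\N$ on the boundary) to itself be a polytope decomposition in the sense of \deref{d:polytope_d}, which requires its own short argument.
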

\begin{proof}
 The theorem immediately follows from the following two lemmas. 
 \begin{lemma}\label{l:moves_rel1}
  If $\N$ is a triangulation, then the statement of the theorem holds. 
 \end{lemma}
 \begin{lemma}\label{l:moves_rel2}
  If $\N$ is obtained from another polytope decomposition $\N'$ of $\del 
  M$ by a move M1, M2 \textup{(}only M1 if $\dim M=2$\textup{)}, and the
  statement of the theorem holds for $\N'$, then the statement of the
  theorem holds for $\N$.
 \end{lemma}
\begin{proof}[Proof of \leref{l:moves_rel1}]
  Follows from the relative version of Pachner moves \ocite{casali}.
\end{proof}
 \begin{proof}[Proof of \leref{l:moves_rel2}]

  We will do the proof in the case when $\dim M=3$ and $\N$ is obtained
  from $\N'$ by erasing an edge $e$ separating two cells $c_1, c_2$.  The
  proof in other cases is   similar and left to the reader. 

  Let $\M'$ be a polytope decomposition of the $M$ which agrees with
  $\N'$ on $\del M$; by assumption such a decomposition exists. Denote
  $c=c_1\cup e\cup c_2$. Let us glue to $\M'$ another copy of 2-cell $c$
  along the boundary of $c_1\cup e\cup c_2$ and a 3-cell $F$ filling the
  space between $c_1\cup e\cup c_2$ and $c$ as   shown in
  \firef{f:rel_moves2} 

   \begin{figure}[ht]
  $$
  \fig{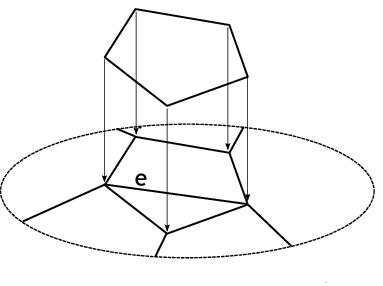}\xxto{}\fig{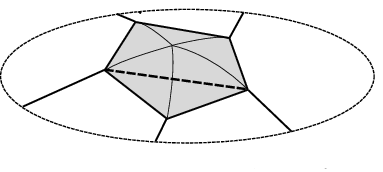}
  $$
\caption{Proof of \leref{l:moves_rel2}}\label{f:rel_moves2}
  \end{figure}
  This gives a new
  manifold $\tilde M$ which is obviously homeomorphic to $M$, together
  with a polytope decomposition $\tilde M$ such that its restriction to the
  boundary is $\N$. This proves existence of extension. Moreover, it is
  immediate from the assumption on $\N'$ that any two polytope
  decompositions $\tilde M_1$, $\tilde M_2$ obtained in this way from
  polytope decomposition $\M'_1, \M'_2$  extending $\N'$  can be
  obtained from each other by a sequence of moves  M1,  M2 and their
  inverses which do not  change decomposition of $\del  M$.
  
  To prove the second part, let $\M_1$, $\M_2$ be two polytope
  decompositions which coincide  with $\N$ on $\del M$. Let us add 2-cells
  $c_1,c_2$ and an edge $e$ to to each of these
    decomposition as shown in \firef{f:rel_moves2b}; this  gives  new
  decompositions
  $\tilde \M_1,\tilde \M_2$ which are of  of the form discussed  above and
    thus can be obtained from each   other by  a sequence of moves
  M1,  M2 and their inverses which do not  change decomposition of $\del
  M$. 
   \begin{figure}[ht]
$$   \fig{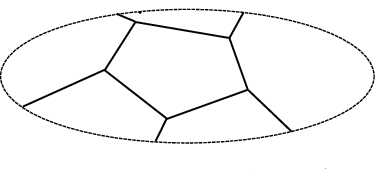}\xxto{}\fig{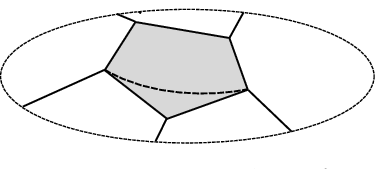}$$
  \caption{Proof of \leref{l:moves_rel2}}\label{f:rel_moves2b}
  \end{figure}
  
 \end{proof}
\end{proof}

Finally, we will need a slight generalization of this result.

\begin{theorem}\label{t:moves_rel2}
  Let $M$ be a  3-manifold with boundary and let $X\subset \del M$ be a
  subset homeomorphic to a 2-manifold with boundary. Let $\N$ be a
  polytope decomposition of a $X$. Then
  \begin{enumerate}
  \item $\N$ can be extended to a polytope decomposition $\M$ of $M$
  \item Any two polytope decompositions $\M_1, \M_2$ of $M$ which coincide
  with $\N$ on  $X$ can be obtained from each other by  a finite
  sequence of moves M1--M3 and their  inverses which do not change the
  polytope decomposition of $X$.
  \end{enumerate}
\end{theorem}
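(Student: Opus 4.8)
The plan is to deduce the statement from Theorem~\ref{t:moves_rel} by working first on the boundary surface $\del M$ and then propagating inward. Write $Y=\ov{\del M\setminus X}$ for the closure of the complement of $X$ in $\del M$; this is a compact PL surface with $\del Y=X\cap Y=\del X$ a disjoint union of circles, and the given decomposition $\N$ of $X$ restricts to a polytope decomposition $\N_0$ of $\del X$. For part (1), I would first apply Theorem~\ref{t:moves_rel}(1) to the surface $Y$ with the prescribed boundary decomposition $\N_0$ of $\del Y$, extending $\N_0$ to a polytope decomposition $\N_Y$ of $Y$; gluing $\N$ and $\N_Y$ along $\del X$ yields a polytope decomposition $\widetilde\N$ of the closed surface $\del M$ that restricts to $\N$ on $X$. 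Applying Theorem~\ref{t:moves_rel}(1) once more, now to the $3$-manifold $M$ with boundary decomposition $\widetilde\N$, produces the desired extension of $\N$ to a polytope decomposition $\M$ of $M$.

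For part (2), let $\M_1,\M_2$ be polytope decompositions of $M$ agreeing with $\N$ on $X$, and set $\widetilde\N_i=\M_i|_{\del M}$. Since $\widetilde\N_1$ and $\widetilde\N_2$ both restrict to $\N$ on $X$, their restrictions to $Y$ agree with $\N_0$ on $\del Y$, so Theorem~\ref{t:moves_rel}(2) connects $\widetilde\N_1|_Y$ and $\widetilde\N_2|_Y$ by a finite sequence of $2$-dimensional moves M1, M2 and their inverses that leave the decomposition of $\del Y$ unchanged. Because these moves are supported in the interior of $Y$, they leave the decomposition of $X$ (indeed of all of $X$, including $\del X$) untouched; thus $\widetilde\N_1$ and $\widetilde\N_2$ are joined by a sequence of $2$-dimensional boundary moves fixing $X$.

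It remains to lift this sequence of boundary moves to $M$, which I expect to be the crux. A single $2$-dimensional move on $Y$ is in general not a $3$-dimensional move on $M$ (a boundary edge typically lies on interior $2$-cells, so M2 does not apply to it directly), so I would use the collar/pillow device from the proof of \leref{l:moves_rel2}: given a decomposition $\M$ of $M$ with $\M|_{\del M}=\widetilde\N$ and a single move carrying $\widetilde\N$ to $\widetilde\N'$ inside $Y$, glue onto $\del M$ a thin $3$-cell $F$ whose bottom face is the union of the cells of $\widetilde\N$ involved in the move and whose top face is the new cell of $\widetilde\N'$. The result is a manifold homeomorphic to $M$, with boundary decomposition $\widetilde\N'$, obtained from $\M$ --- by the argument of \leref{l:moves_rel2} --- through a composition of moves M1--M3 and their inverses which, since $F$ lies entirely over the interior of $Y$, leaves $X$ untouched. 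Iterating along the sequence of part (2), I would convert $\M_1$ into a decomposition $\M_1'$ with $\M_1'|_{\del M}=\widetilde\N_2=\M_2|_{\del M}$, using only moves that fix $X$. Finally $\M_1'$ and $\M_2$ agree on all of $\del M$, so Theorem~\ref{t:moves_rel}(2) joins them by moves not changing the decomposition of $\del M$ --- a fortiori not changing that of $X$ --- and assembling these steps proves part (2).

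The main obstacle I anticipate is making the pillow/collar construction precise in this relative setting: one must verify, as in \leref{l:moves_rel2}, that gluing $F$ and its subsequent collapse are genuinely expressible through the elementary moves M1--M3, that every intermediate complex is an honest polytope decomposition in the sense of \deref{d:polytope_d}, and that localizing all of these operations over the interior of $Y$ never disturbs the cells of $X$ or of $\del X$. Once that bookkeeping is carried out, the remainder is a direct application of Theorem~\ref{t:moves_rel} to the pieces $Y$ and $M$.
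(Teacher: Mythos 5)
The paper gives no details for this theorem (it only says the proof is ``similar'' to that of \thref{t:moves_rel}), so your black-box reduction to \thref{t:moves_rel} is a legitimate alternative structure, and part (1) of your argument is fine modulo the routine point that gluing the given decomposition of $X$ to one of $Y=\ov{\del M\setminus X}$ along $\del X$ again yields a polytope decomposition in the sense of \deref{d:polytope_d}. The genuine gap is in part (2), at exactly the step you flag as the crux: you assert that attaching the pillow $F$ over the cells of $Y$ involved in a $2$-dimensional move is, ``by the argument of \leref{l:moves_rel2}'', a composition of moves M1--M3 applied to $\M$. That is not what \leref{l:moves_rel2} provides. There the pillow is glued on externally only to exhibit \emph{existence} of an extension of the coarser boundary decomposition, and in the second half of its proof the cells $c_1,c_2,e$ are inserted in the \emph{interior} of $M$ by inverse M3 and inverse M2 moves --- all of which leave the boundary decomposition unchanged. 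Nowhere does that argument perform a move altering the decomposition of $\del M$. Your step, by contrast, requires precisely such boundary-changing moves: if $\M_1|_Y\ne\M_2|_Y$, any connecting sequence must at some point apply M1 or M2 to a boundary vertex or edge lying in $Y$, and those moves are only applicable once every interior cell incident to that vertex or edge has been cleared away.

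So the missing ingredient is a lifting lemma: a single $2$-dimensional move applied in $\Int Y\subset\del M$ can be realized by a finite sequence of $3$-dimensional moves on $\M$ fixing $X$. This is true but needs proof. For removal of a boundary edge $e$ separating boundary $2$-cells $c_1,c_2$, the natural plan is to use inverse M3 to insert an interior $2$-cell $c'$ parallel to $c_1\cup e\cup c_2$ with $\del c'=\del(c_1\cup e\cup c_2)$, so that $e$ becomes incident to exactly the two $2$-cells $c_1,c_2$ of the resulting pillow; then apply M2 to $e$ (the boundary-changing step); then apply M3 to erase $c'$. But inverse M3 splits a \emph{single} $3$-cell, so this works directly only when $c_1$ and $c_2$ bound the same $3$-cell of $\M$, which need not hold, and the boundary vertex move M1 requires a separate treatment. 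Until this lifting lemma is stated and proved --- with every intermediate complex checked to be a polytope decomposition agreeing with $\N$ on $X$ --- part (2) is not established.
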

A proof is similar to the proof of the previous theorem; details are left
to the reader. 
\section{TV invariants from polytope decompositions}\label{s:TV}

In this section, we recall the definition of Turaev--Viro (TV) invariants
of 3-manifolds. Our exposition essentially follows the approach of
Barrett and Westbury \ocite{barrett}; however, instead of triangulations we
use more general polytope decompositions as defined in the previous
section.

Let $\C$ be a spherical fusion category as in \seref{s:prelim}, and $\M$
--- a combinatorial 3-manifold. We denote by $E$ the set of oriented edges
(1-cells) of $\M$. Note that each 1-cell of $\M$ gives rise to two oriented
edges, with opposite orientations. 

\begin{definition}\label{e:labeling}
 An  {\em labeling} of $\M$ is a map $l\colon E\to \Obj \C$ which assigns
  to every oriented edge $e$ of $\M$ an object $l(e)\in \Obj \C$ such that
  $l(\ov{e})=l(e)^*$. A labeling is called simple if for every edge,
  $l(e)$ is simple.  
 
  Two labelings are called equivalent if $l_1(e)\simeq l_2(e)$ for every
  $e$. 
\end{definition}

Given a combinatorial 3-manifold $\M$ and a labeling $l$, we
define, for every oriented 2-cell $C$, the state space
\begin{equation}\label{e:H(C)}
 H(C,l)=\<l(e_1), l(e_2),\dots, l(e_n)\>,\qquad 
 \del C=e_1\cup e_2\dots\cup e_n
\end{equation}
where the edges $e_1,\dots, e_n$ are taken in the counterclockwise order on
$\del C$ as shown in \firef{f:state_space1}.
  \begin{figure}[ht]
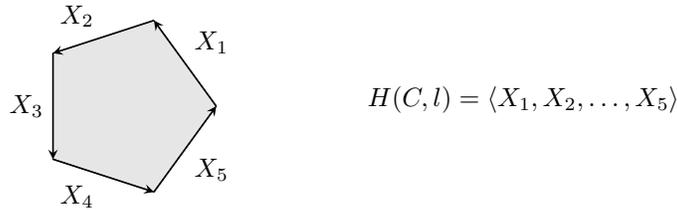

  \tzStateSpaceI
  \hspace{1.5cm} $H(C,l)=\<X_1,X_2,\dots, X_5\>$
  \caption{Defining the state space for a 2-cell}\label{f:state_space1}
  \end{figure}

Note that by \eqref{e:cyclic}, up to a canonical isomorphism, the state
space only depends  on the cyclic order of $e_1,\dots, e_n$ (which is
defined by $C$) and does not  depend on the choice of the starting point.

If $\N$ is an oriented 2-dimensional combinatorial manifold, we define the
state space
$$
H(\N,l)=\bigotimes_C H(C,l)
$$
where the product is over all 2-cells $C$, each taken with 
orientation induced from orientation of $\N$. 

Finally, we define
\begin{equation}\label{e:H(N)}
 H(\N)=\bigoplus_{l}H(\N,l),
\end{equation}
where the sum is over all simple labelings up to equivalence.

In the case when $\N$ is a triangulated surface, this definition
coincides with the one in \ocite{barrett}.

Note that it is immediate from \eqref{e:dual} that we have canonical
isomorphism
\begin{equation}\label{e:H-duality}
H(\ov{\N})=H(\N)^*.
\end{equation}

Next, we define the TV invariant of 3-manifolds. Let $\M$ be a
combinatorial 3-manifold with boundary. Fix a labeling $l$ of edges of
$\M$. Then every  3-cell $F$ defines a vector
$$
Z(F,l)\in  H(\del F,l)
$$
defined as follows. Recall that $F$ is an inclusion 
$F\colon (0,1)^3\to M$. The pullback of the polytope  decomposition of
$\M$  gives a polytope decomposition of $\del (0,1)^3\simeq S^2$. Consider
the dual  graph $\Ga$ of this decomposition and  choose an
orientation for every edge of this dual graph (arbitrarily) as shown in
\firef{f:dual_graph}.

   \begin{figure}[ht]
   $$\fig{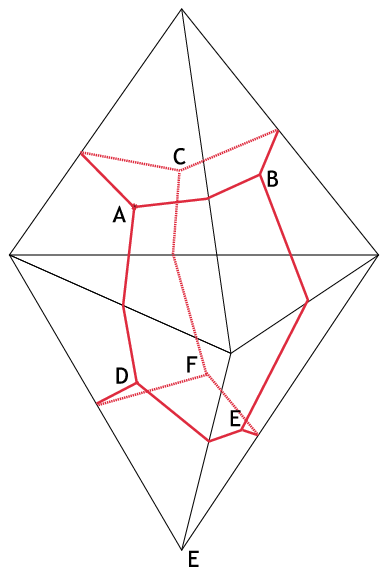}\xxto{}\fig{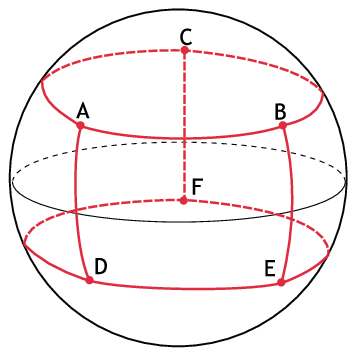}$$
  \caption{The dual graph on the boundary of a 3-cell}\label{f:dual_graph}
  \end{figure}

Note that a labeling $l$ of $\M$ defines  a labeling of edges of this dual
graph as shown in \firef{f:state_space2}. Moreover, choose, for every face
$C\in \del F$, an element $\ph_C\in H(C,l)^*=\<l(e_n)^*,\dots, l(e_1)^*\>$.
Then this collection of morphisms defines a coloring of  vertices of
$\Ga$.

   \begin{figure}[ht]
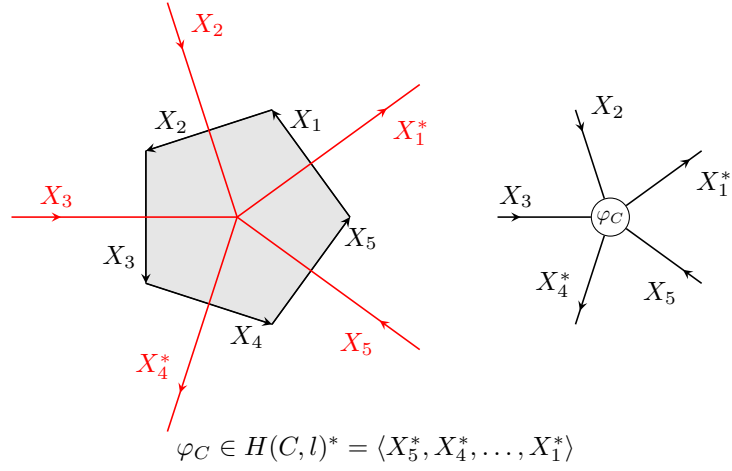

   \tzStateSpaceII
  \qquad $\ph_C \in H(C,l)^*=\<X_5^*,X_4^*, \dots, X_1^*\>$
  \caption{Coloring of the dual graph}\label{f:state_space2}
  \end{figure}

By \thref{t:spherical1}, we get an invariant $Z_{RT}(\Ga)\in \kk$, which
depends on the choice of labeling of edges $l$ and on the choice of
morphisms $\ph_C$.  We define $Z(F,l)\in \otimes_{C}H(C,l)$
by 
\begin{equation}\label{e:Z(F)}
(Z(F,l), \otimes \ph_C)=Z_{RT}(\Ga, l, \{\ph_C\}).
\end{equation}

Again, if $F$ is a tetrahedron, then this coincides with the definition in
\ocite{barrett}; if $\C$ is the category of representations of
quantum $\mathfrak{sl}_2$, these numbers are the $6j$-symbols. 

We can now give a definition of the TV invariants of combinatorial
3-manifolds.

\begin{definition}\label{d:TV_invariant}
  Let $\M$ be a combinatorial 3-manifold with boundary and $\C$ -- a
  spherical category. Then for any coloring $l$, define a vector 
$$
Z_{TV}(\M,l)\in  H(\del \M,l)
$$
by 
$$
Z_{TV}(\M,l)=\ev\Bigl(\bigotimes_F Z(F,l)\Bigr)
$$
where 
\begin{itemize}
  \item $F$ runs over all 3-cells in $M$,   each taken with the induced
    orientation, so that  
    $$
      \bigotimes_F Z(F,l)\in \bigotimes_F H(\del F,l)=H(\del \M,l)\otimes
      \bigotimes_{c}  H(c',l)\otimes H(c'',l)
    $$
    (compare with \leref{l:delF})  

  \item $c$ runs over all unoriented 2-cells in
    the interior of   $M$, $c', c''$ are the two orientations of such a
    cell, so that   $c'=\ov{c''}$. 
  \item $\ev$ is the tensor product over all $c$ of evaluation maps
    $H(c',l)\otimes  H(c'',l)=H(c',l)\otimes H(c',l)^*\to \kk$
\end{itemize}

Finally, we define
$$
Z_{TV}(\M)=\DD^{-2v(\M)}\sum_{l} \Bigl(
Z_{TV}(\M,l)\prod_{e}d^{n_e}_{l(e)}\Bigr)
$$
where 
\begin{itemize}
\item the sum is taken over all equivalence classes of simple labelings
  of $\M$,
\item $e$ runs over the set of all (unoriented) edges of $\M$
\item $\DD$ is the dimension of the category $\C$ (see \eqref{e:DD}), and
$$
v(\M)=\text{number of internal vertices of } \M+\frac{1}{2}\text{(number of
vertices on }\del \M)
$$
\item $d_{l(e)}$ is the categorical dimension of $l(e)$ and 

$$n_e=\begin{cases}1, &\quad e \text{ is an internal edge}\\
                   \tfrac{1}{2}, &\quad e \in \del \M
                   \end{cases}
$$
\end{itemize}
\end{definition}

It is easy to see that in the special case of triangulated manifold, this
coincides with the construction in \ocite{barrett}.

\begin{theorem}\label{t:main1}
  If $M$ is a PL manifold without boundary, then the
  number $Z_{TV}(M)\in \kk$ defined in \deref{d:TV_invariant} does not
  depend on the choice of polytope decomposition of $M$: for any two
  choices of polytope decomposition, the resulting 
  invariants are equal.
\end{theorem}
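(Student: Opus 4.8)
The plan is to invoke \thref{t:moves}: any two polytope decompositions of a closed PL $2$- or $3$-manifold are connected by a finite sequence of the moves M1--M3 and their inverses, so it suffices to prove that $Z_{TV}(\M)$ is unchanged under each of M1, M2, M3 separately (invariance under a move is trivially equivalent to invariance under its inverse). Since $M$ has no boundary, every cell is interior, so in \deref{d:TV_invariant} we have $n_e=1$ for all edges and $v(\M)$ is simply the number of vertices; this removes all the boundary bookkeeping. For each move I would write out $Z_{TV}(\M)$ and $Z_{TV}(\M')$ from \deref{d:TV_invariant}, record how the sets of vertices, edges, $2$-cells, $3$-cells and simple labelings change, and then reduce the equality of the two expressions to a local identity for the Reshetikhin--Turaev evaluation $Z_{RT}$ of colored graphs on spheres.

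Move M3 removes a $2$-cell $c$ separating two $3$-cells $F_1, F_2$, which merge into a single cell $F$; vertices and edges are unchanged, so $\DD^{-2v(\M)}\prod_e d_{l(e)}$ is unaffected and it is enough to see that $Z_{TV}(\M,l)=Z_{TV}(\M',l)$ for every simple labeling $l$. The only difference is that the factor $Z(F_1,l)\otimes Z(F_2,l)$ together with the contraction $H(c',l)\otimes H(c'',l)\to\kk$ is replaced by $Z(F,l)$, so the move reduces to the statement that the dual graph on $\del F\simeq S^2$, obtained by gluing the dual graphs on $\del F_1$ and $\del F_2$ at the vertices dual to $c$ and contracting the attaching state spaces, has $Z_{RT}$ equal to the product of the two --- i.e.\ the composition/gluing behavior of $Z_{RT}$ built into \thref{t:spherical1}.

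Move M2 removes an edge $e$ that is adjacent to exactly two $2$-cells $c_1,c_2$, which merge into $c=c_1\cup e\cup c_2$. Now the vertices are unchanged but $e$ disappears: a simple labeling of $\M'$ no longer labels $e$, and $\prod_e d_{l(e)}$ loses the factor $d_{l(e)}$. Every $3$-cell $F$ whose boundary contains $e$ has $c_1$ and $c_2$ on the two sides of $e$ in $\del F$, so in the dual graph of $\del F$ the edge dual to $e$ joins the vertices dual to $c_1$ and $c_2$; M2 contracts all these dual edges at once and fuses the state-space pair $H(c_1,l),H(c_2,l)$ into $H(c,l)$. After summing over $l(e)\in\Irr(\C)$ with weight $d_{l(e)}$, the required equality is exactly the semisimple gluing isomorphism \eqref{e:gluing1}; the factor $d_{l(e)}$ is what makes the composition maps \eqref{e:composition} agree with the pairings, so this is where \coref{c:composition2} and \leref{l:pairing} enter. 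Move M1 removes a vertex $v$ with an ``open book'' neighborhood, spine $e_1\cup v\cup e_2$ and $k$ leaves, merging $e_1,e_2$ into one edge $e$; the number of vertices drops by one, producing an extra factor $\DD^{2}$ from $\DD^{-2v(\M)}$, and the edge count drops by one as well. In the boundary $\del F$ of each $3$-cell around $v$, the edges dual to $e_1$ and $e_2$ form a pair of parallel dual edges joining the two faces that meet along the spine, and M1 fuses them into the single dual edge of $e$; the $\DD^{2}$ from the deleted vertex, together with the weighted sum $\sum_{X}d_X(\,\cdot\,)$ over the label of $e_1,e_2$, then collapses the now-redundant sum over one of $l(e_1),l(e_2)$, again by way of \eqref{e:gluing1} together with the normalization $\DD^{-2}\sum_{X\in\Irr(\C)}d_X^{2}=1$.

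I expect the difficulty to be entirely in the bookkeeping rather than in any single deep step: in each case one must check that the categorical inputs (\thref{t:spherical1}, \leref{l:pairing}, \coref{c:composition2} and the gluing isomorphism \eqref{e:gluing1}) assemble, across all the $3$-cells touched by the move simultaneously, into precisely the equality of the two global expressions of \deref{d:TV_invariant}, with every exponent of $\DD$ and of the $d_{l(e)}$ matching on the nose. Among the three moves I expect M1 to be the most delicate, since one has to pin down exactly how the ``open book'' appears in the boundary of each incident $3$-cell, verify that the parallel pair of dual edges genuinely occurs there, and confirm that the vertex normalization $\DD^{\pm 2}$ is precisely what is needed to absorb the extra label sum. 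Once invariance under M1, M2 and M3 is established, \thref{t:moves} gives the theorem at once.
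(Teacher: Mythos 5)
Your overall strategy is exactly the paper's: invoke \thref{t:moves} to reduce the theorem to invariance under M1--M3, and verify each move by a local identity for the $Z_{RT}$ evaluation of the dual graphs. Your treatments of M2 (the weighted sum over the label of the removed edge turns the pair of dual-basis sums for $c_1,c_2$ into the single dual-basis sum for $c$, i.e.\ dual bases compose to dual bases under the rescaled composition, \coref{c:composition2}) and M3 (the contraction over $H(c',l)\otimes H(c'',l)$ simply reconnects the two dual graphs, which is \leref{l:pairing2}) match the paper's argument.

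The one place your sketch would not close as written is M1. First, the paper inserts a preliminary reduction you omit: using M2 and M3 (already established) it replaces a general $k$-leaved open book by a single-page book, so only one local identity needs checking. Second, and more substantively, that identity reads
$$
\frac{1}{\DD^2}\sum_{j,k\in\Irr(\C)}d_jd_k\,\bigl(\text{graph with two internal edges }X_j,X_k\bigr)
=\sum_{i\in\Irr(\C)}d_i\,\bigl(\text{graph with one internal edge }X_i\bigr),
$$
and this is not a consequence of the gluing isomorphism \eqref{e:gluing1} together with $\DD^{-2}\sum_X d_X^2=1$, which is what you cite: the gluing isomorphism collapses a sum over a single composition edge, whereas here a doubly-indexed sum over $(j,k)$ must be converted into a singly-indexed one. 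The paper does this by reducing (via semisimplicity) to the case where the object $V$ carried by the page is simple, noting that both sides then lie in the one-dimensional space $\Hom(\one,V\otimes V^*)$, identifying the right-hand side with $\coev_V$ via \leref{l:pairing}, and computing the scalar on the left as $\frac{1}{\DD^2}\sum_{j,k}d_jd_kN_1^{Vjk}=\frac{1}{\DD^2}\sum_j (d_Vd_j)\,d_j=d_V$; the essential input is that the categorical dimension is a character of the Grothendieck ring, $\sum_k N_{k^*}^{Vj}d_k=d_Vd_j$, an ingredient absent from your list of tools. With that supplied, the rest of your bookkeeping goes through.
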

The proof of this theorem will be given in \seref{s:proof}.

These invariants can be extended to a TQFT. Namely, let $\M$ be a
combinatorial 3-cobordism between two 2-dimensional combinatorial
manifolds $\N_1,\N_2$, i.e a combinatorial manifold $\M$ with boundary such
that $\del \M=\ov{\N_1}\sqcup \N_2$ (note that the combinatorial structure
on $M$ automatically defines a combinatorial structure on $\del M$). Then
$H(\del \M)=H(\N_1)^*\otimes H(\N_2)=\Hom_\kk(H(\N_1),H(\N_2))$, so
\deref{d:TV_invariant} defines an element $Z(\M)\in
\Hom_\kk(H(\N_1),H(\N_2))$, i.e. a linear operator 
$$
Z(\M)\colon H(\N_1)\to H(\N_2).
$$

\begin{theorem}\label{t:main2}
\par\noindent
  \begin{enumerate}
      \item So defined invariant satisfies the gluing axiom: if $\M$ is a
        combinatorial 3-manifold with boundary $\del \M=\N_0\cup
        \N\cup\ov{\N}$, and $\M'$ is the manifold obtained by identifying
        boundary components $\N,\ov{\N}$ of $\del\M$ with the obvious cell
        decomposition, then  we have
    $$
      Z_{TV}(\M')=\ev_{H(\N)}Z_{TV}(\M)=\sum_\al (Z_{TV}(\M),
          \ph_\al\otimes \ph^\al),
    $$
        where $\ev$ is the evaluation map $H(\N)\otimes H(\ov{\N})\to
        \kk$, and $\ph_\al\in H(\N)$,  $\ph^\al\in H(\ov{\N})$ are dual
        bases. 
 
      \item If a $M$ is a 3-manifold with boundary, and $\M', \M''$ are two
        polytope decompositions of $M$  which agree     on the
        boundary, then
        $Z(\M')=Z(\M'')\in H(\del \M')=H(\del \M'')$. 
        
    \item For a combinatorial 2-manifold $\N$, define $A_{\N}\colon H(\N)\to
      H(\N)$ by 
      \begin{equation}\label{e:projector}
       A_{\N}=Z_{TV}(\N\times I)
      \end{equation}
       Then $A_{\N}$ is a projector: $A_{\N}^2=A_{\N}$.

    \item For a combinatorial 2-manifold $\N$, define the vector space
    \begin{equation}\label{e:modular_functor}
    Z_{TV}(\N)=\mathrm{Im}(A_{\N}\colon H(\N)\to H(\N))
    \end{equation}
  where $A$ is the projector \eqref{e:projector}.
  Then the space $Z_{RT}(N)$ is an invariant of PL manifolds: if $\N',
  \N''$ are two different polytope decompositions of the same PL manifold
  $N$, then one has a canonical isomorphism $Z(\N')\simeq Z(\N'')$.
   
  \item The assignments $N\mapsto Z_{TV}(N)$, $M\mapsto Z_{TV}(M)$ give a
  functor from the category of PL 3-cobordisms to the category of
  finite-dimensional vector spaces and thus define a $2+1$-dimensional
  TQFT.
\end{enumerate}

\end{theorem}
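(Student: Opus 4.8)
The plan is to prove the five assertions in the stated order, since (1) and (2) contain all the geometric content while (3)--(5) are formal consequences.

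First I would prove the gluing axiom (1) directly from \deref{d:TV_invariant} by tracking normalization factors. The point is that the weights $\DD^{-2v(\M)}$ and $\prod_e d_{l(e)}^{n_e}$ are designed so that a vertex, resp.\ an edge, lying on the glued surface $\N$ --- which contributes $\tfrac{1}{2}+\tfrac{1}{2}$ to $v(\M)$, resp.\ $d_{l(e)}^{1/2}\cdot d_{l(e)}^{1/2}$ to the edge product, by appearing in both $\N$ and $\ov\N$ --- becomes after gluing a single interior vertex, resp.\ edge, of weight $1$. Hence $v(\M')=v(\M)$ and the edge product is unchanged, for any simple labeling $l'$ of $\M'$ and the labeling $l$ of $\M$ it induces. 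By \leref{l:delF} the $2$-cells of $\N$ that become interior contribute to $Z_{TV}(\M',l')$ precisely one extra round of evaluation maps $H(c',l)\otimes H(c'',l)\to\kk$; combining this with $H(\N)=\bigoplus_l H(\N,l)$, $H(\N,l)=\bigotimes_{C\subset\N}H(C,l)$, $H(\ov\N)=H(\N)^*$ from \eqref{e:H-duality}, and the fact that simple labelings of $\M'$ are exactly those simple labelings of $\M$ that agree on $\N$ and $\ov\N$, the sum $\sum_\al(Z_{TV}(\M),\ph_\al\otimes\ph^\al)$ reassembles into $Z_{TV}(\M')$.

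Next I would prove (2). By \thref{t:moves_rel}(2) it suffices to show that $Z(\M)$ is unchanged under a single move M1, M2 or M3 not affecting the decomposition of $\del M$. Such a move is supported inside a closed ball $B\subset M$ (using the regularity hypotheses built into M1--M3) which meets its complement in a fixed configuration of $2$-cells, so the gluing axiom of step (1) expresses $Z(\M)$ as a contraction of $Z$ of the unaffected complement with $Z$ of the combinatorial ball. The claim thus reduces to: the two decompositions of $B$ give the same vector in the state space of $\del B$, equivalently $Z_{TV}$ is invariant under the move on the closed manifold $S^3$ obtained by doubling $B$. This is exactly the local move-invariance underlying the proof of \thref{t:main1} in \seref{s:proof} --- which in turn rests on the gluing isomorphism \eqref{e:gluing1}, \leref{l:pairing}, \coref{c:composition2} and the sphere invariant \thref{t:spherical1} --- and no extra bookkeeping is needed because the boundary normalization is untouched. \emph{I expect this step to be the main obstacle}, as it is where the genuine content of Turaev--Viro invariance enters.

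The remaining items are formal. For (3): gluing two copies of $\N\times I$ along $\N$ gives a polytope decomposition of $N\times I$ agreeing with $\N$ on both boundary components, so by (1) its invariant is $A_\N\circ A_\N$ and by (2) it equals $A_\N=Z_{TV}(\N\times I)$; hence $A_\N^2=A_\N$. For (4): given decompositions $\N',\N''$ of $N$, choose by \thref{t:moves_rel} a decomposition $\M$ of $N\times I$ restricting to $\N'$ and $\N''$ on the two ends; by (2) the operator $Z(\M)\colon H(\N')\to H(\N'')$ is independent of the choice. Gluing $\N'\times I$ below $\M$ and $\N''\times I$ above it gives $Z(\M)A_{\N'}=Z(\M)=A_{\N''}Z(\M)$, so $Z(\M)$ carries $\mathrm{Im}(A_{\N'})$ into $\mathrm{Im}(A_{\N''})$; composing $\M$ with a cylinder $\M'$ running from $\N''$ back to $\N'$ gives $Z(\M')Z(\M)=A_{\N'}$ and $Z(\M)Z(\M')=A_{\N''}$, so on the images of the projectors $Z(\M)$ restricts to an isomorphism $Z(\N')\isoto Z(\N'')$. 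It is canonical because $Z(\M)$ does not depend on $\M$, and these isomorphisms compose correctly since a composite of cylinders is again a cylinder. Finally, for (5), I would define $Z_{TV}(N)$ by \eqref{e:modular_functor} (well defined up to the canonical isomorphisms of (4)) and, for a PL cobordism $M\colon N_1\to N_2$, define $Z_{TV}(M)$ as $Z(\M)$ for a decomposition $\M$ extending chosen decompositions of $N_1,N_2$, restricted to $Z(\N_1)\to Z(\N_2)$ (well posed by (2) and (4)); functoriality is the gluing axiom (1) together with (2), the identity cobordism acts by $A_\N$, i.e.\ the identity on $Z(\N)$, by (3), and compatibility with disjoint unions follows from multiplicativity of $Z_{TV}(\M,l)$ over components. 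This assembles the $2+1$-dimensional TQFT.
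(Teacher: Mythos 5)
Your proposal is correct and follows essentially the same route as the paper: part (1) by direct bookkeeping of the normalization weights and the dual-basis evaluation (which the paper dismisses as ``immediate from the definition''), part (2) by reduction via \thref{t:moves_rel} to invariance under the moves M1--M3 (the content of \seref{s:proof}, which you rightly flag as the real work), and parts (3)--(5) by the same cylinder-gluing and mutually-inverse-operator arguments the paper uses. The only cosmetic difference is that you localize the move invariance in a ball and double to $S^3$, whereas the paper applies the local computations of \seref{s:proof} directly to the relative setting; both are fine.
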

\begin{proof}
   Part (1) is immediate from the definition. 

   Part (2) will be proved in \seref{s:proof}.

    To prove part (3), note that
    gluing of two cylinders again gives a cylinder, so (3) follows from
    (1) and (2). 
  
  To prove (4), let $\N', \N''$ be two different polytope decompositions of
  $N$. Consider the cylinder $C=N\times I$ and choose a polytope
    decomposition of $C$ which agrees with $\N'$ on $N\times\{0\}$ and
    agrees with $\N''$ on $N\times\{1\}$ (existence of such a
      decomposition follows from \thref{t:moves_rel}).  Consider the
    corresponding operator 
    $F_1=Z(C)\colon  H(\N')\to H(\N'')$. In a similar way, define an
    operator $F_2\colon 
    H(\N'')\to H(\N')$. Then it follows from (2) that $F_1F_2=A_{\N''}$,
    and $F_2F_1=A_{\N'}$. Thus, $F_1, F_2$ give rise to mutually
    inverse isomorphisms $Z_{TV}(\N')\to Z_{TV}(\N'')$.

   Part (5) follows immediately from (1)--(4).
    
\end{proof}

Note that in the PL category, gluing along a boundary component is well
defined: gluing together PL manifolds results canonically in a PL
manifodl (unlike the smooth category). 

\begin{example}\label{x:H(S)}
  Let $G$ be a finite group and $\C=\VecG$ --- the category of $G$-graded
  vector spaces, with obvious  tensor structure. Then a simple labeling is
  just labeling of edges of $\M$ with elements of the group $G$, and for a
  2-cell $C$, we have
  $$
  H(C,l)=\begin{cases} \kk, &\prod_{\del C} l(e)=1\\
                      0, & \text{otherwise}
          \end{cases}
  $$
  Thus, we see that in this case the state space $H(\N)$ is the space of
  flat $G$---connections (which depends on the choice of polytope
  decomposition!). It is well-known that in this case the projector
  $A=Z_{TV}(\Si\times I)$  is the operator of averaging over the action of
  the gauge group $G^{v(\N)}$, where $v(\N)$  is the set of vertices of $\N$. 
  Thus the space $Z(N)$ is the space of gauge equivalence classes of $G$--connections.
\end{example}
\begin{example}\label{x:2sphere}
We verify $Z_{TV}(S^2) =\kk$ as is required by the definition of a TQFT.
We pick the polytope decomposition of $S^2$ consisting of one vertex, one
edge and two faces as shown in \firef{f:2sphere}.
   \begin{figure}[ht]
   \fig{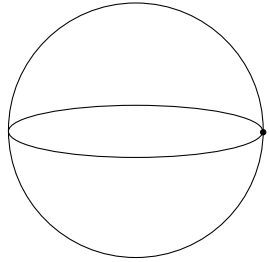}
  \caption{The polytope decomposition of $S^2$}\label{f:2sphere}
  \end{figure}
Using the fact that for $X_i$, $X_j$
simple $\Hom(X_i,X_j) = \delta_{ij}\kk$, it is easy to see that $H(S^2)
=\bigoplus_i \<X_i\>\otimes \<X_i^*\>= \kk$. It remains to show that
$A\colon  H(S^2) \rightarrow H(S^2)$ is the identity map or equivalently,
the induced map $H(S^2) \otimes H(S^2)^* \rightarrow \kk$ equals the
canonical pairing defined in \seref{s:prelim}. Consider the cylinder
$S^2\times I$ with cell decomposition as in \firef{f:saturn}. Note that
both boundary edges must be labeled by $\one$.
   \begin{figure}[ht]
   \fig{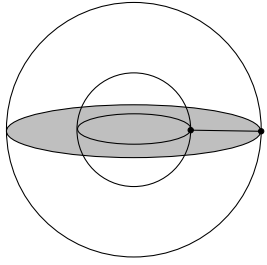}
  \caption{The cylinder over  $S^2$}\label{f:saturn}
  \end{figure}
The computation is then
straightforward: 
\begin{align*}
&\frac{1}{\DD^2}\sum_{X\in \Irr(\C)}d_X
\tzSphereProofI \cdot \tzSphereProofII 
=\frac{1}{\DD^2}\sum_{X\in \Irr(\C)}
\tzSphereProofIII \cdot \tzSphereProofIV \\
&\quad =\frac{1}{\DD^2}\sum_{X\in \Irr(\C)}d^2_X
 =1.
\end{align*}
The first equality follows from the
normalization of the pairing. The other two equalities are obvious. 
\end{example}

\section{Proof of independence of polytope
decomposition}\label{s:proof}

In this section, we give proofs of \thref{t:main1}, \thref{t:main2},
i.e. prove that TV invariants are independent of the choice of polytope
decomposition. The proof is based on \thref{t:moves},
\thref{t:moves_rel}, which state that any two decompositions can be
obtained from one another by a sequence of moves M1--M3 and their inverses.

First, we fix some notation. Unless otherwise stated, we denote simple
objects in $\C$ by 
$X_{i},X_{j}\dots$ and arbitrary objects by $A, B,\dots$. We let
$N_{1}^{i_{1}\dots i_{k}}=\dim(\<X_{i_{1}},\dots,X_{i_{k}}\>)$. 

We will now show that the TV state sum is invariant under M1--M3.

\subsection*{Invariance under M1}
First we consider move M1. Note that by applying M2 and M3, we can
transform an open book  with any number of pages to one with only one page
(see \firef{f:book}). 
\begin{figure}[ht]
\begin{align*}&\figscale{0.7}{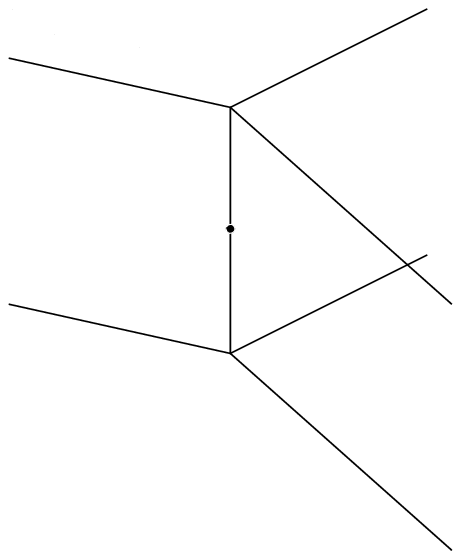}
    \xxto{\text{add edge to each page}}
    \figscale{0.7}{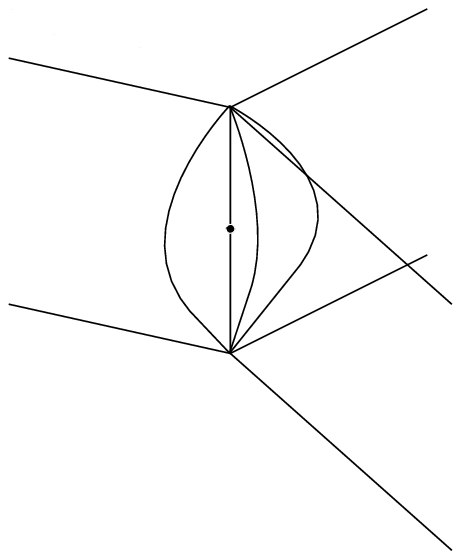}
    \\
     & \xxto{\text{add faces between new edges}}
    \figscale{0.7}{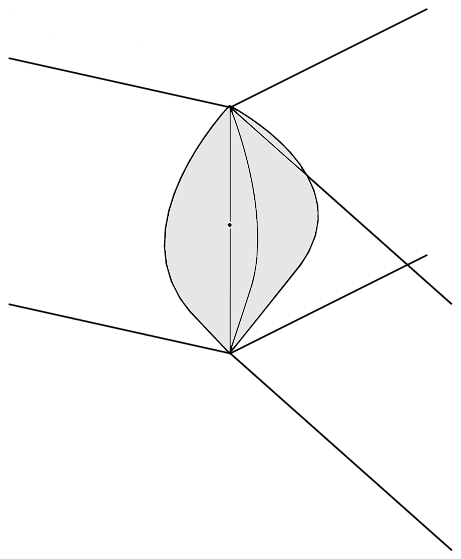}
     \xxto{\text{remove all pages but one}}  
   \figscale{0.7}{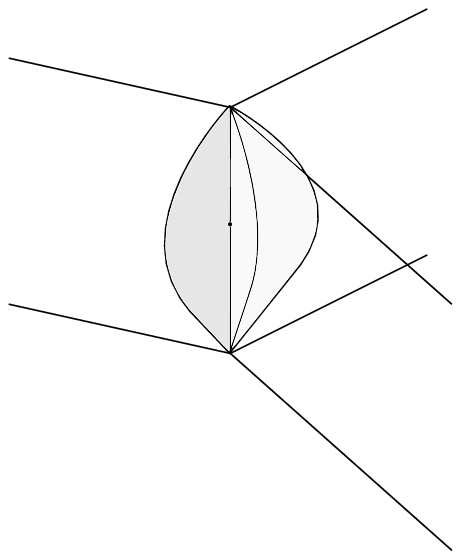} 
   \end{align*}
 \caption{Decomposing an open book into a single page book}\label{f:book}
 \end{figure} 
Thus, it suffices to prove invariance under M1 in this
special case. Drawing the dual graph in the vicinity of the vertex,
invariance under M1 is equivalent to the following equality:
$$
\frac{1}{\DD^2}\sum_{j,k\in \Irr(\C)}d_{j}d_{k}\tzPairingVII
=
\sum_{i\in \Irr(\C)}d_{i}\tzPairingIII
$$
Note the normalizing factor
$\frac{1}{\mathcal{D}^2}$ which comes from the fact that we are removing a
vertex.

Using semisimplicity of $\C$, it is easy to see that it suffices to show
this equality in the special case when $V=V_1\otimes \dots \otimes V_n$ is
simple:
$$
\frac{1}{\DD^2}\sum_{j,k\in \Irr(\C)}d_{j}d_{k}\tzPairingVIII
=
\sum_{i\in \Irr(\C)}d_{i}\tzPairingIX
$$
By \leref{l:pairing}, the right-hand side is equal to $\coev_V\colon \one
\to V\otimes V^*$. Since $\Hom(\one, V\otimes V^*)$ is one-dimensional,
the left-hand side is also a multiple of $\coev_V$. Composing it with the
evaluation morphism  $\ev_V$, we get
\begin{align*}
&\frac{1}{\DD^2}\sum_{j,k}d_jd_kN_{1}^{Vjk} =
\frac{1}{\DD^2}\sum_{j,k}N_{k^*}^{Vj}d_kd_j\\
&\quad=\frac{1}{\DD^2}\sum_{j}\Bigl(\sum_{k}N_{k^*}^{Vj}d_k\Bigr)d_j=
\frac{1}{\DD^2}\sum_{j}(d_{V}d_j)d_j=d_V,
\end{align*}
which proves that the left-hand side is equal to $\coev_V$.

\subsection*{Invariance under M2}
The invariance under M2 is seen as follows. By definition, the edge being
removed is incident to exactly two faces $c_1, c_2$. Each face bounds the
same two 3-cells $F_1, F_2$. In  \firef{f:move2}, we draw the dual graphs.
In each of the summands we have two graphs corresponding to cells $F_1,
F_2$, separated by a dot.  The equality follows
immediately from the fact that if $\ph_\al,\ph^\al$ and $\psi_\be,\psi^\be$ 
are dual bases, then so are $\ph_\al\ccc{X_i}\psi_\be$, $\psi^\be\ccc{X_i^*}\ph^\al$ 
(cf. \coref{c:composition2}). 
\begin{figure}[ht]
\begin{align*}
&\sum_{i,\alpha,\beta}d_{i}\tzMoveMtwoI \tzMoveMtwoII\\ 
&\quad= \sum_{i,\alpha,\beta} \tzMoveMtwoIII \tzMoveMtwoIV
\end{align*}
\caption{}\label{f:move2}
\end{figure}

\subsection*{Invariance under M3}
Finally, we consider M3. In this case the invaraince immediately follows
from \leref{l:pairing2}, whith  two subgraphs corresponding to two 3-cells
separated by the 2-cell being removed.

\section{Surfaces with boundary}\label{s:boundary}

In this section we extend the definition of TV TQFT to surfaces with
boundary (and 3-manifolds with corners). Recall that according to general
ideas of extended field theory (see \ocite{lurie}), an extended 3d 
TQFT should assign to a closed 1-manifold a 2-vector space, or an abelian
category, and to a 2-cobordism between two 1-manifolds, a functor between
corresponding categories (which in the special case of cobordism between
two empty 1-manifolds gives a functor $\Vect\to \Vect$, i.e. a  vector
space). In this section we show that the extension of the  TV TQFT to
1-manifolds assigns to a circle $S^1$ the category $Z(\C)$---the Drinfeld
center of the original spherical category $\C$. This result was
proved by Turaev in the special case when the original category $\C$ is
ribbon (see \ocite{turaev}); the general case has remained a
conjecture.

For technical reasons, it is more convenient to replace surfaces with
boundaries by surfaces with embedded disks. These two notions give
equivalent theories: given a surface with boundary, we can glue a disk to
every boundary circle and get a surface with embedded disks; conversely,
given a surface with embedded disks, one can remove the disks to get  a
surface with boundary. Moreover, in order to accommodate real-life
examples, we need to consider framing. This leads to the following
definition.

We denote
$$
D^2=[0,1]\times[0,1]
$$
and will call it {\em the standard disk} (it is, of course, a square, but
this is what a disk looks like in PL setting). We will also the marked
point $P_0$ on the boundary of $D^2$
$$
P_0=(0,1)\in \del D^2
$$
%

\begin{definition}\label{d:ex_surface}
A framed embedded disk  $D$ in a PL surface $N$ is the image of a PL
map
$$
\ph\colon D^2\to N
$$
which is a homeomorphism with the image, together with the point
$P=\ph(P_0)\subset \del D$.

An {\em extended surface} is a PL surface $N$ together with a finite
collection of disjoint framed embedded disks (see \firef{f:ex_surface}).
We will denote the set of embedded disks by $D(N)$.

A {\em coloring} of an extended surface is a choice of an object $Y_\al\in
\Obj Z(\C)$ for every embedded disk $D_\al$.
\end{definition}
\begin{figure}[ht]
\fig{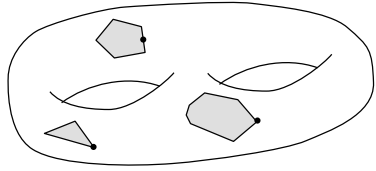}
\caption{Extended surface}\label{f:ex_surface}
\end{figure}
%

Next, we can define cobordisms between such surfaces. As usual, such a
cobordism will be a 3-manifold with boundary together with some ``tubes''
inside which connect the embedded disks on the boundary of $M$. The
following gives a precise definition in the PL category.

\begin{definition}\label{d:tube}
Let $M$ be a PL 3-manifold with boundary.

An open {\em embedded tube} $T\subset M$ is the image of a PL map
$$
\ph\colon [0,1]\times D^2\to M
$$
which is satisfies the conditions below, together with the oriented arc
$\ga=\ph([0,1]\times \{P_0\})$ (which we will call the {\em longitude}).

The map $\ph$ should satisfy:
\begin{enumerate}
  \item $\ph$ is a homeomorphism onto its image
  \item $T\cap \del M=\ph(\{0\}\times D^2) \cup \ph(\{1\} \times D^2)$
\end{enumerate}

We will call the disks $B_0=\ph(\{0\}\times D^2)$ and $B_1=\ph(\{1\}\times
D^2)$ the {\em bottom } and {\em top}  disks of the tube.

A closed embedded tube $T\subset M$ is the image of a PL map
$$
\ph\colon S^1\times D^2\to M
$$
which is satisfies the conditions below, together with the oriented arc
$\ga=\ph([0,1]\times \{P_0\})$ (the {\em longitude}) and the disk
$B=\ph(\{0\}\times D^2)\subset T$.

The map $\ph$ should satisfy:
\begin{enumerate}
  \item $\ph$ is a homeomorphism onto its image
  \item $T\cap \del M=\varnothing$
\end{enumerate}

\end{definition}

The longitude  $\ga$ determines the framing of the tube; the disk $B$ is
convenient for technical reasons; later we will get rid of it.

\begin{definition}\label{d:ex_manifold}
  An extended 3-manifold $M$ is an oriented  PL 3-manifold with boundary
  together with a finite collection of disjoint framed tubes $T_i\subset
  M$. We denote the set of tubes of $M$ by $T(M)$.

  A coloring of an extended 3-manifold $M$ is a choice of an object
    $Y_\al\in \Obj Z(\C)$ for every tube $T_\al$.
\end{definition}

\begin{figure}[ht]

\fig{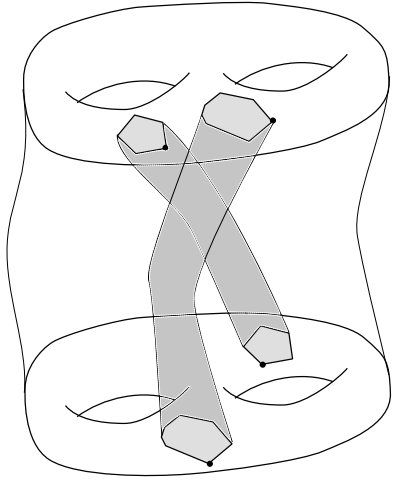}
\caption{Extended 3-manifold}\label{f:ex_manifold}
\end{figure}

Note that if $M$ is an extended 3-manifold, then its boundary $\del M$ has
a natural structure of an extended surface: the embedded disks are the
bottom and top disks  of the open tubes, and the marked points on the
boundary of embedded disks are the endpoints of the longitude arcs
$\ga_\al$, where $\al$ runs over the set of all open tubes in $M$.
Moreover, a coloring of $M$ defines a coloring of $\del M$: if an open
tube $T_\al$ is colored with $Y_\al\in \Obj Z(\C)$, we color the embedded
disk $\ph_\al(\{1\} \times D^2)$ with $Y_\al$ and the embedded
disk $\ph_\al(\{0\} \times D^2)$ with $Y_\al^*$.

Our main goal will be extending the TV invariants to such extended
surfaces and cobordisms. Namely, we will
\begin{enumerate}
  \item Define, for every colored extended surface $N$, the space
  $Z_{TV}(N, \{Y_\al\})$ which
  \begin{itemize}
   \item functorially  depends on colors $Y_\al$
  \item is functorial under homeomorphisms of extended surfaces
  \item has natural isomorphisms $Z_{TV}(\ov{N}, \{Y_\al^*\})=Z_{TV}(N,
\{Y_\al\})^*$
  \item satisfies the gluing axiom for surfaces
  \end{itemize}
 \item Define, for every colored extended 3-manifold $M$, a vector
    $Z_{TV}(M)\in Z_{TV}(\del M)$ (or, equivalently, for any
  colored extended 3-cobordism $M$ between colored extended surfaces $N_1,
  N_2$, a linear map $Z_{TV}(M)\colon Z_{TV}(N_1)\to Z_{TV}(N_2)$) so that
  this satisfies the gluing axiom for extended 3-manifolds.
\end{enumerate}

In the subsequent papers we will show that this  extended theory
actually coincides with the Reshetikhin--Turaev theory for the
modular category $Z(\C)$:
$$
Z_{RT, Z(\C)}=Z_{TV, \C}.
$$

The construction of the theory proceeds similar to the construction of TV
invariants. Namely, we will first define $Z_{TV}(N)$, $Z_{TV}(M)$ for
manifolds with a polytope decomposition and then show that the so defined
objects are independent of the choice of a polytope decomposition and
thus define an invariant of extended manifolds. 

\section{Extended combinatorial surfaces}\label{s:ext_surfaces}
We begin by generalizing the definition of a polytope decomposition to
extended surfaces.

\begin{definition}\label{d:comb_ex_surface}
    A  combinatorial extended surface $\N$ is a an extended surface $N$
  together with a  polytope decomposition such that
\begin{enumerate}
  \item The interior of each embedded disk is one of the 2-cells of the
    polytope decomposition.
  \item Each marked point $P_\al$ on the boundary of an embedded disk is a
    vertex (0-cell) of the polytope decomposition. 
\end{enumerate}
\end{definition}

We can now define the state space for such a surface. Let $\N$ be
a combinatorial extended surface, and $Y_\al, \al\in D(N)$, --- a
coloring of $\N$. Let $l$ be a labeling of edges of $\N$. Then we define
the state space
$$
H(\N, \{Y_\al\},l)=\bigotimes_C  H(C,l)
$$
where the product is over all 2-cells of $\N$ (including the embedded
disks) and
$$
H(C,l)=\begin{cases}
\<Y_\al,l(e_1), l(e_2),\dots, l(e_n)\>&\quad
                  C=D_\al \text{ -- an embedded disk}\\
\<l(e_1), l(e_2),\dots, l(e_n)\>&\quad C \text{ -- an ordinary 2-cell of
$\N$}
\end{cases}
$$
where $e_1,e_2,\dots$ are edges of $C$ traveled counterclockwise;
for the embedded disks, we also require that we start with the
marked point $P_\al$; for ordinary 2-cells of $\N$ the choice of starting
point is not important.

As usual, we now define
\begin{equation}\label{e:ex_state_sum}
  H(\N, \{Y_\al\})=\bigoplus_{l}H(\N, \{Y_\al\},l)
\end{equation}
where the sum is taken over all equivalence classes of simple labellings
$l$ of edges of $\N$. 

Note that so defined state space is functorial in $Y_\al$ and functorial
under homeomorphism of extended surfaces;  it is
 also immediate from the definition that one has a canonical isomorphism
$$
H(\ov{\N},Y^*_\al)=H(\N,Y_\al)^*.
$$

\begin{example}\label{x:n-sphere}
Let $\N$ be the sphere with $n$ embedded disks and the cell decomposition
shown in \firef{f:n-sphere}. 
\begin{figure}[ht]

\fig{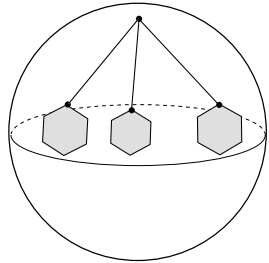}
\caption{$n$-punctured sphere}\label{f:n-sphere}
\end{figure}
Then
\begin{align*}
&H(\N,Y_1, \dots, Y_n)\\
&\qquad=\bigoplus_{X_1,\dots, X_n, U_1,
\dots, U_n\in \Irr(\C)}
\<X_1,U_1,X_1^*,\dots, X_n, U_n, X_n^*\>\otimes
\<U_1^*,Y_1\>\otimes\dots\otimes \<U_n^*,Y_n\>\\
&\qquad\simeq\bigoplus_{X_1,\dots, X_n\in \Irr(\C)}
\<X_1,Y_1,X_1^*,\dots, X_n, Y_n, X_n^*\>.
\end{align*}
where the last isomorphism is given by direct sum of rescaled compositions
\eqref{e:composition2}.
\end{example}

The first main result of this paper is the gluing axiom for the so defined
state space.

\begin{theorem}\label{t:gluing_surfaces}
Let $\N$ be a combinatorial extended surface and $D_a, D_b$ --- two
distinct embedded disks.  Let $\N'$ be the extended
surface obtained by removing the disks $D_a$, $D_b$ and connecting
the resulting boundary circles with a cylinder with the polytope
decomposition consisting of a single 2-cell and a single 1-cell as shown
below:
\begin{figure}[ht]
\fig{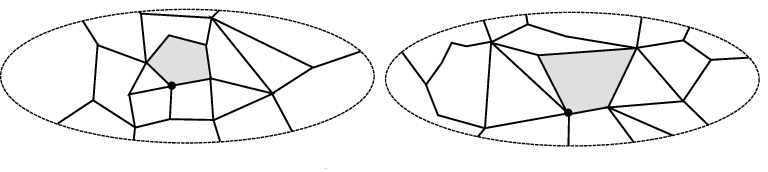}

\fig{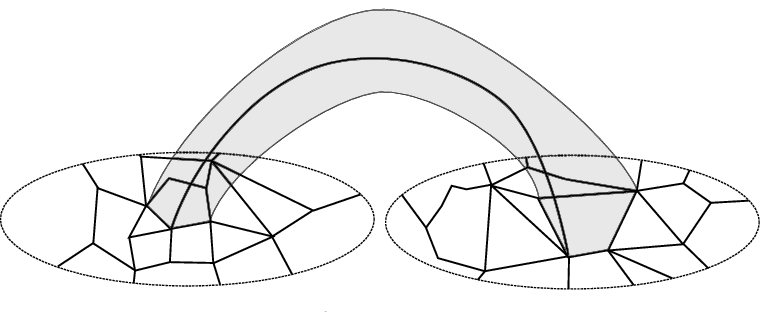}

\caption{Gluing of extended surfaces. To help
visualize the cylinder, it is colored light gray.}\label{f:gluing_surfaces}
\end{figure}

Thus, the set $D'$ of embedded disks of $\N'$ is
$D'=D(\N)\setminus\{a,b\}$

Then one has a natural isomorphism
$$
H(\N', \{Y_\al\}_{\al\in D'})=
\bigoplus_{Z\in \Irr(Z(\C))} H(\N, \{Y_\al\}_{\al\in D'},
Z, Z^*)
$$
where objects $Z,Z^*$ are assigned to embedded disks $D_a,D_b$.
\end{theorem}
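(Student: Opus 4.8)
The plan is to unwind both state spaces as direct sums over simple labelings, observe that they differ only in the contribution of the cells near $D_a$ and $D_b$, and reduce the theorem to a categorical identity which follows from the description of the induction functor $I\colon\C\to Z(\C)$ in \thref{t:I} together with semisimplicity of $Z(\C)$.

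First, the combinatorial step. As in the figure, the boundary circle of each of $D_a$, $D_b$ is a single loop edge $e_a$, $e_b$ based at its marked point (this is forced: otherwise the glued cylinder would not be a single $2$-cell). Then the edges of $\N'$ are those of $\N$ together with one new edge $k$ (the longitude), and the $2$-cells of $\N'$ are those of $\N$ with $D_a,D_b$ deleted and one new cell $C_{\mathrm{cyl}}$ added; moreover $H(C,l)$ for $C\ne D_a,D_b,C_{\mathrm{cyl}}$ involves only edges of $\N$. Writing a simple labeling $l'$ of $\N'$ as a simple labeling $l$ of $\N$ plus a choice $l'(k)=X_K\in\Irr(\C)$ and factoring out the common tensor factor $\bigotimes_{C\ne D_a,D_b}H(C,l)$ from both sides (on the right using $H(D_a,l)=\<Z,A\>$, $H(D_b,l)=\<Z^*,B\>$ with $A=l(e_a)$, $B=l(e_b)$), the theorem reduces to producing, for all simple $A,B$, an isomorphism natural in $A$ and $B$ (naturality in the $Y_\al$, $\al\in D'$, being automatic since they enter only through the common factor):
\[
\bigoplus_{K\in\Irr(\C)}\<A,X_K,B,X_K^*\>\;\simeq\;\bigoplus_{Z\in\Irr(Z(\C))}\<Z,A\>\otimes\<Z^*,B\>,
\]
where the left side is $H(C_{\mathrm{cyl}},l)$ read off from the cyclic edge-word of $C_{\mathrm{cyl}}$ (with $e_a,e_b$ carrying the orientations they have as boundary edges of $C_{\mathrm{cyl}}$, and the two occurrences of $k$ with opposite orientations).

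Next, the categorical identity. Rotating cyclically via \eqref{e:cyclic} and using rigidity, $\<A,X_K,B,X_K^*\>\simeq\Hom_\C(B^*,X_K^*\otimes A\otimes X_K)$; summing over $K$ and reindexing $X_K\mapsto X_K^*$ identifies the left side with $\Hom_\C(B^*,FI(A))$, the equality $FI(A)=\bigoplus_K X_K\otimes A\otimes X_K^*$ being \eqref{e:I}. Since $Z(\C)$ is semisimple, $I(A)\simeq\bigoplus_{Z\in\Irr(Z(\C))}Z\otimes\Hom_{Z(\C)}(Z,I(A))$ canonically, so applying $F$ and then $\Hom_\C(B^*,-)$ gives $\Hom_\C(B^*,FI(A))\simeq\bigoplus_Z\Hom_\C(B^*,F(Z))\otimes\Hom_{Z(\C)}(Z,I(A))$. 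Here $\Hom_\C(B^*,F(Z))\simeq\<Z,B\>$ by rigidity and \eqref{e:cyclic}, and $\Hom_{Z(\C)}(Z,I(A))\simeq\<Z^*,A\>$: using rigidity in $Z(\C)$ and the identity $I(A)^*\simeq I(A^*)$ (immediate from \eqref{e:I} and the pivotal structure), one has $\Hom_{Z(\C)}(Z,I(A))\simeq\Hom_{Z(\C)}(I(A^*),Z^*)\simeq\Hom_\C(A^*,F(Z^*))\simeq\<Z^*,A\>$, the middle step being the adjunction $\Hom_{Z(\C)}(I(V),X)=\Hom_\C(V,F(X))$ of \thref{t:I}. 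Reindexing $Z\mapsto Z^*$ then yields the desired isomorphism. Finally one checks that the resulting isomorphism of the full state spaces is canonical (all the isomorphisms above are explicit, cf. the rescaled compositions of \coref{c:composition2} in \exref{x:n-sphere}) and compatible with the cyclic identifications \eqref{e:cyclic} and with the duality $H(\ov{\N'})\simeq H(\N')^*$; this is routine.

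The main obstacle is the orientation and duality bookkeeping in the reduction step: one must confirm that the cyclic edge-word of the cylinder $2$-cell is genuinely $\<A,X_K,B,X_K^*\>$, with $l(e_a)$ and $l(e_b)$ appearing undualized, since this is exactly what matches the asymmetric coloring (the object $Z$ on $D_a$ versus $Z^*$ on $D_b$) in the statement --- a sign error here yields a false identity. The secondary point requiring care is the ambidexterity $I(A)^*\simeq I(A^*)$ (equivalently, that $I$ is also a right adjoint of $F$), which has to be extracted from the explicit formula \eqref{e:I} and the pivotal structure. The unwinding of the state spaces and the chain of canonical isomorphisms are otherwise routine.
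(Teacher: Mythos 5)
Your proposal is correct and takes essentially the same route as the paper: both reduce the statement, cell by cell, to the isomorphism $\bigoplus_{K}\<A,X_K,B,X_K^*\>\simeq\bigoplus_{Z}\<Z,A\>\otimes\<Z^*,B\>$ (the paper's \leref{l:gluing_isom2}) and prove it by identifying the left-hand side with a $\Hom$-space into $FI(-)$ via \thref{t:I} and then invoking the adjunction and semisimplicity of $Z(\C)$ --- you even make explicit the ambidexterity $I(A^*)\simeq I(A)^*$ that the paper uses tacitly. The only (harmless) slip is your claim that $\del D_a$ is forced to be a single loop edge: the definitions permit several boundary edges, but then $A$ is simply the tensor product of their labels and your argument goes through unchanged.
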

\begin{proof}
For a given labeling $l$ of edges of $\N$, let
$$
H_0(l)=\bigotimes_C H (C,l)
$$
where the product is taken over all 2-cells of $\N$ (including the
embedded disks) except $D_a$, $D_b$. 
Then
$$
H(\N,\{Y_\al\}, Z, Z^*, l)=H_0(l)\otimes 
 \<Z, A\>\otimes \<Z^*, B\>
$$
where $A=l(e_1)\otimes l(e_2)\dots\otimes  l(e_n)$, 
where $e_1,e_2,\dots$ are edges of $D_a$ traveled counterclockwise
starting with the marked point $P_a$, and similarly for $B$.

On the other hand, for a given labeling $l'$ of edges of $\N'$, we have
$$
H(\N', \{Y_\al\}, l')=H_0(l)\otimes \<A\otimes l(e) \otimes B\otimes
l(e)^*\>
$$
where $l$ is the restriction of labeling $l'$ to edges of $\N$, and $e$ is
the added  edge connecting marked points $P_a$, $P_b$.

Thus, the theorem immediately follows from the following lemma.

\begin{lemma}\label{l:gluing_isom2} 
For any $A,B\in \Obj \C$, the map
\begin{equation}\label{e:gluing_isom2}
\begin{aligned}
\bigoplus_{Z\in \Irr(Z(\C))} \<Z, A\>\otimes \<Z^*, B\>&\to
  \bigoplus_{X\in \Irr(\C)}\<A, X , B, X^*\>\\
\ph\otimes \psi&\mapsto\bigoplus_{X\in \Irr(\C)}
\frac{\sqrt{d_X}\sqrt{d_Z}}{\DD} \quad \tzGluingAxiomI
\end{aligned}
\end{equation}
is an an isomorphism.
\end{lemma}
(The factor $\sqrt{d_X}\sqrt{d_Z}/\DD$ is introduced to make this isomorphism
agree with pairing \eqref{e:pairing}.)
\begin{proof}
By \thref{t:I}, we have
\begin{align*}
&\bigoplus_{X\in \Irr(\C)}\<A\otimes X \otimes B\otimes X^*\>
=\bigoplus_{X}\Hom_\C(A^*, X \otimes B\otimes X^*)
\\
&\quad=\Hom_\C(A^*, FI(B))=\Hom_{Z(\C)}(I(A^*), I(B))
\end{align*}
On the other hand, 
\begin{align*}
&\bigoplus_{Z\in \Irr(Z(\C))}
 \<Z, A\>\otimes \<Z^*, B\>=
\bigoplus_Z\Hom_\C(Z^*,A)\otimes \Hom_\C(Z,B)\\
&\quad
=\bigoplus_Z\Hom_{Z(\C)}(Z^*,I(A))\otimes \Hom_{Z(\C)}(Z,I(B))\\
&\quad=\bigoplus_Z\Hom_{Z(\C)}(I(A)^*, Z)\otimes
\Hom_{Z(\C)}(Z,I(B))\\
&\quad =\Hom_{Z(\C)}(I(A)^*,I(B))
\end{align*}
(using semisimplicity of $Z(\C)$). 
\end{proof}
This completes the proof of the lemma
and thus the theorem.
\end{proof}

\section{Invariants of  extended  3-manifolds}\label{s:ext_mfds}
We begin by generalizing the definition of a polytope decomposition to
extended 3-manifolds as defined in \deref{d:ex_manifold}.

\begin{definition}\label{d:comb_ex_manifold}
 A combinatorial extended 3-manifold $\M$ is  an extended PL 3-manifold
  with a polytope decomposition such that

  \begin{itemize}
   \item For an open tube $T_\al$, its interior is a single 3-cell of the
    decomposition. Moreover, the interior of the  ``bottom disk''
    $B_0=\ph_\al(\{0\}\times D^2)$ is a single 2-cell of the decomposition,
    and the marked point $P$ on the boundary of the bottom disk is a
    vertex of the decomposition, and similarly for the top disk
    $B_1=\ph_\al(\{1\}\times D^2)$.

   \item For a closed tube $T_\al$, the interior of the  disk
    $B_\al=\ph_\al(\{0\}\times D^2)$ is a single 2-cell of the
    decomposition, the marked point $P_\al\in \del B_\al$ is a vertex of
    the decomposition, and the complement $\Int(T_\al)-B_\al$ is
    a single 3-cell of the decomposition.

  \end{itemize}
\end{definition}

Note that this implies that the restriction of such a polytope
decomposition to the boundary of $\del M$ satisfies the conditions of
\deref{d:comb_ex_surface} and thus defines on $\del M$ the structure of a
combinatorial extended surface. It also this implies that $\M$
contains two kinds of 3-cells: usual cells (which are not contained in any
tube) and ``tube cells'', i.e. cells contained in one of the tubes. The
boundary of a usual 3-cell is a union of usual 2-cells; the boundary of a
 3-cell corresponding to an open  tube contains usual 2-cells and two
embedded disks;
the boundary of a  3-cell corresponding to a closed tube contains usual
2-cells and two copies of the disk $B_\al$ with opposite orientation. 

Finally, note that we have imposed no restriction on the longitude of the
tube: it is allowed (and usually will) intersect the edges of the
decomposition of the boundary tubes.

The following theorem is an analog of \thref{t:moves_rel}.

\begin{theorem}\label{t:moves_ext}
 Let $M$ be an extended 3-manifold. Then any two polytope decompositions
$\M', \M''$ of $\M$ which satisfy the conditions of
\deref{d:comb_ex_manifold} and agree on $\del M$ can be obtained  from
each other by a sequence of moves M1---M3 and their inverses such that all
intermediate decompositions also satisfy the conditions of
\deref{d:comb_ex_manifold} and agree with $\M', \M''$ on $\del M$.
\end{theorem}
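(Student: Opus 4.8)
The plan is to reduce the statement to \thref{t:moves_rel2} by excising the tubes, exactly as \thref{t:moves_rel} was reduced to the triangulated case. Given the extended $3$-manifold $M$ with tube set $T(M)$, put
$$
M^\circ = M \setminus \Bigl(\bigcup_{T_\al\ \text{open}} \Int(T_\al) \;\cup\; \bigcup_{T_\al\ \text{closed}} (\Int(T_\al)\setminus B_\al)\Bigr),
$$
a compact PL $3$-manifold with boundary. Any polytope decomposition $\M$ satisfying \deref{d:comb_ex_manifold} has, for each tube $T_\al$, exactly one $3$-cell $F_\al$ equal to the excised region, so the remaining cells give a polytope decomposition $\M^\circ$ of $M^\circ$ (that this restriction is again a polytope decomposition in the sense of \deref{d:polytope_d} is a routine collar argument, as in the proof of \leref{l:moves_rel2}). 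Its boundary $\del M^\circ$ is obtained from $\del M$ by deleting each embedded disk $D_\al$ of an open tube and gluing in the tube wall $W_\al=\ph_\al([0,1]\times\del D^2)$ along $\del D_\al$, together with analogous new boundary pieces (a doubled disk and a wall) for each closed tube; it carries the decomposition induced from $\M^\circ$.

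Let $X\subset\del M^\circ$ be the closure of $\del M\setminus\bigcup_\al\Int(D_\al)$; it is a $2$-manifold with boundary $\bigsqcup_\al\del D_\al$, and since $\M'$ and $\M''$ agree on $\del M$ they induce the \emph{same} decomposition $\N$ of $X$. Applying \thref{t:moves_rel2} to $M^\circ$, the subsurface $X$, and the decomposition $\N$, the two induced decompositions $\M'^\circ,\M''^\circ$ of $M^\circ$ are related by a finite sequence of moves M1--M3 and their inverses, none of which changes the decomposition of $X$.

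It then remains to lift this sequence to $M$. A move of the sequence either involves only cells disjoint from every wall $W_\al$, in which case it is literally the same move of $M$; or it involves cells lying on some wall $W_\al$. In the latter case those cells are boundary cells of $M^\circ$, so only moves M1 (removing a wall vertex) and M2 (merging two wall $2$-cells across a wall edge) can occur there --- never M3, since a boundary $2$-cell is adjacent to a single $3$-cell. One checks that such a move satisfies the hypotheses of the corresponding move of $M$ as well: the only cells of $\ov{F_\al}$ meeting the wall are the wall cells themselves (together with $B_0,B_1$, which touch only $\del D_\al\subset X$), so re-inserting $F_\al$ adds no new cells at the relevant vertex or edge; and the result still satisfies \deref{d:comb_ex_manifold} --- the interior of $T_\al$ is the same single $3$-cell $F_\al$, only with its boundary $2$-cells along the wall coarsened, while the disks $B_0,B_1$ (resp.\ $B_\al$) and the marked points on $\del D_\al$ are untouched. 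Composing the lifted moves yields the desired sequence from $\M'$ to $\M''$ through decompositions of $M$ each satisfying \deref{d:comb_ex_manifold} and agreeing on $\del M$. The closed-tube case is handled in the same way.

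The main obstacle is precisely this last bookkeeping: one must verify that every move produced by \thref{t:moves_rel2} on $M^\circ$ lifts to a legal move on $M$, and that the rigid tube-cell conditions of \deref{d:comb_ex_manifold} survive each intermediate step --- in particular that no move is ever forced to subdivide the single $3$-cell of a tube, and that a move meeting a wall only ever coarsens wall cells. The subsidiary point that $\M|_{M^\circ}$ is genuinely a polytope decomposition is minor and handled, as before, by reducing to the triangulated case via \ocite{casali}.
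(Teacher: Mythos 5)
Your proof follows the paper's argument exactly: excise the tube interiors to obtain a manifold with boundary, observe that the two decompositions restrict to decompositions of that complement which agree on $X=\del M\setminus\bigcup_\al\Int(D_\al)$, and invoke \thref{t:moves_rel2}. The paper's own proof is in fact terser --- it leaves the lifting of the resulting moves back to $M$ (your third paragraph) and the closed-tube case entirely implicit --- so your extra bookkeeping only fills in details the authors omitted.
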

\begin{proof}
Let us consider the manifold $\tilde M$ obtained by removing from $M$ the
interior of every tube and also the interior of the embedded disks on the
boundary of $M$. Then $\tilde M$ is a manifold with boundary
$$
\del \tilde M=(\del M-\cup \Int(D_\al))\cup \del \tilde M_{free}
$$
where the ``free boundary'' $\del \tilde M_{free}$ is the union of side
surfaces $I\times \del D^2$ of the tubes (for closed tubes, $S^1\times
\del D^2$).

Obviously, polytope decompositions $\M', \M''$ satisfying the conditions
of the theorem determine decomposition of $\tilde M$ which agree on the
subset $X=(\del M-\cup \Int(D_\al))\subset \del \tilde M$. Now the result
follows from \thref{t:moves_rel2}.
\end{proof}

Recall that for usual oriented  3-cell $F$ and a choice of edge labeling
$l$, we have defined the vector $Z_{TV}(F,l)\in H(\del
F,l)$ defined by \eqref{e:Z(F)}. We can now generalize it to tube
cells. Namely, let $l$ be an edge coloring of an extended
combinatorial 3-manifold $\M$ and let $T_\al\subset M$ be an open tube,
with the longitude $\ga_\al$ and color $Y_\al\in Z(\C)$. Since $T$ is
homeomorphic to $[0,1]\times D^2\simeq D^3$--- a 3-ball, the boundary
$\del T$ is homeomorphic to $S^2$; thus, the polytope
 decomposition of $T$ defines a polytope decomposition of $S^2$. 

 Let $\Ga$ be  the dual graph of this cell decomposition. We can connect
the marked points on the top and bottom disks to the vertex of the dual
graph corresponding to these disks; together with  the longitude $\ga$,
this gives an oriented arc on the surface of the sphere whose endpoints
are two distinct vertices of $\Ga$. For every 2-cell
$C\in \del F$ (including the embedded disks), choose a vector $v_C\in
H (C,l)^*$. Thus,  we get a graph $\hat\Ga$ of the type considered in
\seref{s:center}, i.e.  colored graph $\Ga$ on the surface of the sphere
together with a colored framed arc inside as shown in \firef{f:dual_tube}. 
\begin{figure}[ht]
  \fig{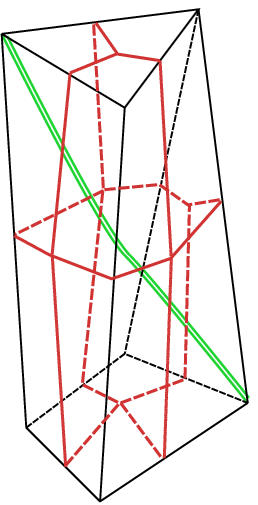}\qquad \fig{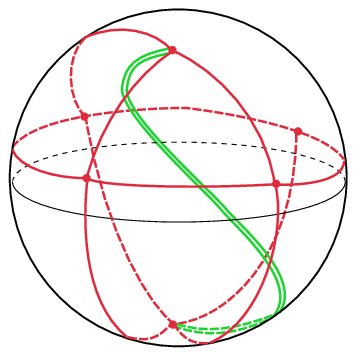}\qquad
\fig{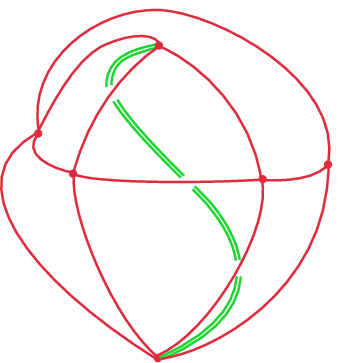}
  \caption{Dual graph for a tube cell. The longitude is shown by double
green line. }\label{f:dual_tube}
\end{figure}
By \thref{t:spherical2} this
defines a number $Z_{RT}(\hat\Ga)$; as before, we let
\begin{equation}\label{e:Z(F)_ex}
(Z(F,l), \otimes v_C)=Z_{RT}(\hat\Ga).
\end{equation}

In a similar way we define the invariant for closed tubes. 

We can now generalize the constructions of \seref{s:TV}  to
extended 3-manifolds.

\begin{definition}\label{d:TV_invariant_ex}
  Let $\M$ be an extended combinatorial 3-manifold with boundary and $\C$
-- a  spherical category. Then for any edge coloring $l$ and a
coloring $Y_\al$ of the tubes $T_\al\subset \M$, define the vector
$$
Z_{TV}(\M, \{Y_\al\}, l)\in  H(\del \M,\{Y_\al\},l)
$$
by 
$$
Z_{TV}(\M,l)=\ev\Bigl(\bigotimes_F Z(F,l)\Bigr)
$$
where 
\begin{itemize}
    \item $F$ runs over all 3-cells in $M$ (including the tube cells),  
    each taken with the induced orientation, so that  
    $$
      \bigotimes_F Z(F,l)\in \bigotimes_F H(\del F,l)=H(\del
\M,l)\otimes
      \bigotimes_{c}  H(c',l)\otimes H(c'',l)
    $$
    (compare with \leref{l:delF})  

  \item $c$ runs over all unoriented 2-cells in
    the interior of   $M$, including the disks $B_\al$ inside the closed
     tubes, and $c', c''$ are the two orientations of such a
    cell, so that   $c'=\ov{c''}$. 
  \item $\ev$ is the tensor product over all $c$ of evaluation maps
    $H(c',l)\otimes  H(c'',l)=H(c',l)\otimes H(c',l)^*\to \kk$
\end{itemize}

Finally, we define
\begin{equation}\label{e:Z_ext}
Z_{TV}(\M,\{Y_\al\})=\DD^{-2v(\M)}\sum_{l} \Bigl(
Z_{TV}(\M,\{Y_\al\},l)\prod_{e}d^{n_e}_{l(e)}\Bigr)
\end{equation}
where 
\begin{itemize}
\item the sum is taken over all equivalence classes of 
   simple labellings of $\M$,
\item $e$ runs over the set of all (unoriented) edges of $\M$
\item $\DD$ is the dimension of the category $\C$ (see \eqref{e:DD}), and
  $$
  v(\M)=\text{number of internal vertices of } \M+
   \frac{1}{2}\text{(number of vertices on }\del \M)
$$
\item $d_{l(e)}$ is the categorical dimension of $l(e)$ and 

$$n_e=\begin{cases}1, &\quad e \text{ is an internal edge}\\
                   \tfrac{1}{2}, &\quad e \in \del \M
                   \end{cases}
$$
\end{itemize}
\end{definition}
Note that in this definition, edges and vertices on the boundary of the
tubes are considered internal unless they are also on $\del \M$.

\begin{theorem}\label{t:main4}
\par\noindent
  \begin{enumerate} 
      \item $Z_{TV}(\M)$ satisfies the gluing axiom: if $\M$ is an
        extended combinatorial 3-manifold with boundary $\del \M=\N_0\cup
        \N\cup\ov{\N}$, and $\M'$ is the manifold obtained by identifying
        boundary components $\N,\ov{\N}$ of $\del \M$ with the obvious cell
        decomposition \textup{(}if $\N$  contains embedded disks,
          then we may need to erase them so that the interior of 
            resulting tubes have  exactly  one 3-cell\textup{)}, then we
        have
    $$
      Z_{TV}(\M')=\ev_{H(\N)}Z_{TV}(\M)=\sum_\al (Z_{TV}(\M),
          \ph_\al\otimes \ph^\al),
    $$
        where $\ev$ is the evaluation map $H(\N)\otimes H(\ov{\N})\to
        \kk$, and $\ph_\al\in H(\N)$,  $\ph^\al\in H(\ov{\N})$ are dual
        bases.

      \item If a $M$ is an extended  PL 3-manifold, and $\M',
          \M''$ are two polytope
        decompositions of $M$  which agree     on the boundary, then
        $Z(\M',\{Y_\al\})=Z(\M'',\{Y_\al\})$. 
        
    \item For a combinatorial 2-manifold $\N$, define $A\colon H(\N)\to
      H(\N)$ by 
      \begin{equation}\label{e:projector2}
       A=Z_{TV}(\N\times I)
      \end{equation}
       Then $A$ is a projector: $A^2=A$.

    \item For a combinatorial extended  2-manifold $\N$, define the vector
        space
     \begin{equation}\label{e:modular_functor2}
    Z_{TV}(\N)=\mathrm{Im}(A\colon H (\N)\to H (\N))
    \end{equation}
  where $A$ is the projector \eqref{e:projector2}.
  Then the space $Z_{RT}(\N)$ is an invariant of PL manifolds: if $\N',
  \N''$ are two different polytope decompositions of the same extended PL
    manifold  $N$, then one has a canonical isomorphism $Z(\N')\simeq
  Z(\N'')$.
\end{enumerate}
\end{theorem}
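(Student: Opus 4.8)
The plan is to transcribe the proof of \thref{t:main2}, substituting \thref{t:moves_ext} for \thref{t:moves_rel} and \thref{t:spherical2} for \thref{t:spherical1} wherever the longitudes of tubes enter. Part (1) will be immediate from \deref{d:TV_invariant_ex}, just as in the non-extended case: gluing $\N$ to $\ov\N$ only replaces the contraction of the matching tensor factors against $\del M$ by an evaluation over $H(\N)$, and the factor $\DD^{-2v(\M)}$ is unchanged because each boundary vertex or edge of weight $\tfrac12$ is paired with another to form an internal one of weight $1$. Granting part (2), part (3) will then be formal: gluing two copies of the cylinder $\N\times I$ end to end yields $\N\times I$ again (the cell structure introduced along the seam being irrelevant by (2)), so $A^2=A$ with $A=Z_{TV}(\N\times I)$. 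And part (4) will follow from (1)--(3) exactly as in \thref{t:main2}(4): choose a polytope decomposition of $N\times I$ restricting to $\N'$ on one end and $\N''$ on the other---its existence being supplied, as in the proof of \thref{t:moves_ext}, by the relative extension results of \seref{s:polytope} applied to $N\times I$ with the tubes drilled out---read off operators $F_1\colon H(\N')\to H(\N'')$ and $F_2$ in the other direction, and use (2) to get $F_1F_2=A_{\N''}$ and $F_2F_1=A_{\N'}$, so that $F_1,F_2$ induce mutually inverse isomorphisms $Z_{TV}(\N')\simeq Z_{TV}(\N'')$.

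So the real content is part (2), and here the approach is: by \thref{t:moves_ext} it suffices to show $Z_{TV}(\M,\{Y_\al\})$ is invariant under a single move M1--M3 (or its inverse) between decompositions satisfying \deref{d:comb_ex_manifold}, and I would split into two cases. If every cell involved in the move is an ordinary (non-tube) cell, then the local identity and the bookkeeping of $\DD$-powers and of $\prod_e d_{l(e)}^{n_e}$ are verbatim those already checked in \seref{s:proof} (via \leref{l:pairing}, \coref{c:composition2}, \leref{l:pairing2}), so nothing new is needed---though one must check that the auxiliary reductions of \seref{s:proof}, such as the page-reduction of \firef{f:book} used for M1, can be performed through decompositions still satisfying \deref{d:comb_ex_manifold}. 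If instead the move touches a tube cell $F=\Int(T_\al)$, I would first note that \deref{d:comb_ex_manifold} forbids the problematic moves (M3 on a $2$-cell of $\del T_\al$, M2 on an edge bordering an embedded disk, M1 at a marked point), so the embedded disks and marked points stay frozen and the only moves in play are M1 at an ordinary vertex and M2 at an ordinary edge of $\del T_\al\cong S^2$, localized in a PL ball $B$ disjoint from all marked points. Then I would use the homotopy invariance in \thref{t:spherical2} to push the coloured longitudinal arc off $B$; with the arc out of the way, the move acts on the dual graph $\hat\Ga$ and on $Z_{RT}(\hat\Ga)$ precisely as the corresponding ordinary-cell move, the normalizing factors transform identically, and the computations of \seref{s:proof} carry over unchanged.

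The step I expect to be the main obstacle is this last one: justifying rigorously that the longitude (with its endpoints pinned at the frozen marked points) can be homotoped off the ball $B$ where the move occurs, and then verifying move-by-move that, with the arc removed from $B$, the extended state sum reduces \emph{on the nose} near $B$ to the ordinary one---so that \thref{t:spherical2} together with the results of \seref{s:proof} really do close the argument. A subsidiary nuisance will be making sure that \deref{d:comb_ex_manifold} is preserved throughout every intermediate decomposition invoked, so that \thref{t:moves_ext} is legitimately applicable; I expect this to be routine but somewhat tedious. Everything else is formal.
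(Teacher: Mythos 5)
Your treatment of parts (2)--(4) follows the same route as the paper: the paper's proof is literally ``parallel to the proof of \thref{t:main2}'', with \thref{t:moves_ext} replacing \thref{t:moves_rel}, and your elaboration of part (2) --- freezing the embedded disks and marked points, localizing an M1/M2 move in a ball disjoint from them, and using the homotopy invariance of \thref{t:spherical2} to push the colored longitude off that ball so that the local computations of \seref{s:proof} apply verbatim --- is exactly the content the paper leaves to the reader. That part of your proposal is fine.

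However, there is a concrete omission in part (1), and it is precisely the step the paper singles out as ``the only new ingredient'' of this theorem relative to \thref{t:main2}. You treat the gluing identity as a purely formal contraction of tensor factors, which is correct when the gluing locus $\N$ consists only of ordinary cells. But when $\N$ contains embedded disks, identifying $\N$ with $\ov{\N}$ concatenates an open tube ending on $\N$ with one ending on $\ov{\N}$; the resulting tube in $\M'$ then has an interior consisting of \emph{two} $3$-cells separated by a $2$-cell (the identified pair of embedded disks), which violates \deref{d:comb_ex_manifold}. The formal contraction only shows that $\ev_{H(\N)}Z_{TV}(\M)$ computes the state sum of this \emph{non-admissible} decomposition; to reach $Z_{TV}(\M')$ one must erase the offending $2$-cell and prove that the state sum is unchanged. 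This is \emph{not} covered by the M3 invariance of \seref{s:proof}: the $2$-cell being removed carries the state space $\<Y_\al,\dots\>$ involving an object of $Z(\C)$, and the two adjacent $3$-cells are tube cells whose invariants are defined via the extended graphs $\hat\Ga$ of \thref{t:spherical2}, with longitude arcs that must be spliced together across the erased disk. An argument in the spirit of \leref{l:pairing2} still works (the paper calls it ``similar to invariance under M3''), but it has to be redone in the extended setting, and your proposal never performs or even flags this check --- indeed your part (2) explicitly declares M3 on a tube boundary ``forbidden'', so nothing elsewhere in your argument supplies it.
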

\begin{proof}
 The proof is parallel to the proof of \thref{t:main2}. The only
 new ingredient is in the proof of part (1), i.e. the gluing axiom for
  3-manifolds: if the component of boundary along which we are gluing
    contains embedded disks, we need to erase them so that in the
  resulting manifold, interior of each tube is exactly one 3-cell. Thus,
  we need to check that our that $Z(\M)$ is unchanged under this
  operation. The proof of this is similar to invariance under M3 move
  proved in \seref{s:proof}. Details are left to the reader. 

\end{proof}

Finally, we also note that our extended theory satisfies the gluing axiom
for extended surfaces. 
\begin{theorem}\label{t:gluing_surfaces2}
  Under the assumptions of \thref{t:gluing_surfaces}, one has a natural
  isomorphism
$$
Z(\N', \{Y_\al\}_{\al\in D'})=
\bigoplus_{Z\in \Irr(Z(\C))} Z(\N, \{Y_\al\}_{\al\in D'},
Z, Z^*)
$$
where objects $Z,Z^*$ are assigned to embedded disks $a,b$.
\end{theorem}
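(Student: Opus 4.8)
The plan is to derive this from the state-space statement \thref{t:gluing_surfaces} together with the definition $Z_{TV}(\N)=\mathrm{Im}(A_\N)$, where $A_\N=Z_{TV}(\N\times I)$ is the projector of \thref{t:main4}(3). Write $\Theta\colon H(\N',\{Y_\al\}_{\al\in D'})\isoto\bigoplus_{Z\in\Irr(Z(\C))}H(\N,\{Y_\al\}_{\al\in D'},Z,Z^*)$ for the natural isomorphism of \thref{t:gluing_surfaces}, and for each simple $Z\in Z(\C)$ let $A^Z:=Z_{TV}(\N\times I)$ computed with the vertical tubes of $\N\times I$ over $D_a,D_b$ colored by $Z,Z^*$ (and the remaining tubes colored by the $Y_\al$); this is precisely the projector whose image is $Z(\N,\{Y_\al\}_{\al\in D'},Z,Z^*)$. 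Since the image of a direct sum of projectors is the direct sum of the images, and $\Theta$ identifies $H(\N')$ with $\bigoplus_Z H(\N,\dots,Z,Z^*)$, the theorem will follow once we establish the intertwining relation
\[
\Theta\circ A_{\N'} \;=\; \Bigl(\bigoplus_{Z\in\Irr(Z(\C))}A^Z\Bigr)\circ\Theta .
\]

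To prove the intertwining relation I would exhibit $\N'\times I$ as a gluing adapted to the description of $\N'$ in \thref{t:gluing_surfaces}. Equip $\N\times I$ with the structure of a combinatorial extended $3$-manifold whose tubes are the vertical tubes $T_\al=D_\al\times I$, $\al\in D(\N)$; then $\N'\times I$ is obtained from $\N\times I$ by deleting the two tubes $T_a,T_b$ and gluing in the region $C\times I$, where $C$ is the connecting cylinder (with its single-2-cell, single-1-cell decomposition) of \thref{t:gluing_surfaces} — equivalently, by performing the surface gluing of \thref{t:gluing_surfaces} simultaneously on the top and bottom copies of $\N$ in $\partial(\N\times I)$. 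Using \thref{t:main4}(2) to pass to polytope decompositions of $\N\times I$, $C\times I$, and $\N'\times I$ that are compatible with this picture (and still satisfy \deref{d:comb_ex_manifold}), and the gluing axiom \thref{t:main4}(1) to cut $Z_{TV}(\N'\times I)$ along these pieces, the computation of $\Theta A_{\N'}\Theta^{-1}$ reduces to the contribution of the connecting region $C\times I$. By the very construction of $\Theta$ this contribution is governed by \leref{l:gluing_isom2}: inserting $C\times I$ replaces the single added edge of $\N'$ by the direct sum $\bigoplus_{Z\in\Irr(Z(\C))}\langle Z,A\rangle\otimes\langle Z^*,B\rangle$ with the normalization $\sqrt{d_X}\sqrt{d_Z}/\DD$ from \eqref{e:gluing_isom2}, so that $Z_{TV}(\N'\times I)$ is carried exactly to $\bigoplus_Z A^Z$. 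This gives the intertwining relation, and taking images yields the desired isomorphism $Z(\N',\{Y_\al\}_{\al\in D'})\simeq\bigoplus_{Z\in\Irr(Z(\C))}Z(\N,\{Y_\al\}_{\al\in D'},Z,Z^*)$.

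The \textbf{main obstacle} I expect is bookkeeping rather than conceptual: one must verify that the polytope decompositions of the three manifolds above can be chosen simultaneously compatibly (so that \thref{t:main4}(1),(2) genuinely apply to the cut), and — more delicately — that the connecting-region factor matches $\Theta$ \emph{on the nose}, not merely up to an automorphism of each summand $H(\N,\dots,Z,Z^*)$. This is exactly the role of the normalization factors in \leref{l:gluing_isom2} and of the rescaled composition of \coref{c:composition2}, which were built in for this purpose; checking that everything lines up is the place where care is needed, but it parallels the corresponding verifications in \seref{s:proof} and in the proof of \thref{t:gluing_surfaces}.
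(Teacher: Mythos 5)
Your proposal follows essentially the same route as the paper: reduce the statement to the intertwining relation $G\circ A = A'\circ G$ between the cylinder projectors, localize the difference between $Z(\N\times I)$ and $Z(\N'\times I)$ to the two disk-cylinders $D_a\times I,\ D_b\times I$ versus the single connecting cell $C\times I$, and match the normalizations via \leref{l:gluing_isom2}. The step you defer as ``bookkeeping'' --- verifying the local identity on the nose --- is precisely what the paper carries out using the explicit computation of \seref{s:computations1} together with \leref{l:pairing} and \leref{l:projector}, so your plan is sound and matches the paper's argument.
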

\begin{proof}
  Recall that by  \thref{t:gluing_surfaces}, one has an isomorphism
  $$
  G\colon \bigoplus_{Z\in \Irr(Z(\C))} H(\N, \{Y_\al\}_{\al\in D'},
Z, Z^*)\isoto H(\N', \{Y_\al\}_{\al\in D'})
  $$
   Since $Z(\N)$ is defined as the image of the projector $A\colon H(\N)\to
H(\N)$, and similarly for $Z(\N')$, it suffices to prove that the
following diagram is commutative: 
$$\begin{tikzpicture}
\matrix[column sep=1cm,row sep=20pt]
{ \node (m11)  {$H(\N, Z,Z^*)$}; &
 \node (m12)  {$H(\N')$};\\
 \node (m21)  {$H(\N, Z,Z^*)$}; &
 \node (m22)  {$H(\N')$};\\
};
\draw[->] (m11)--(m12) node[pos=0.5,above]{$G$};
\draw[->] (m21)--(m22) node[pos=0.5,above]{$G$};
\draw[->] (m11)--(m21) node[pos=0.5,left]{$A$};
\draw[->] (m12)--(m22) node[pos=0.5,right]{$A'$};
\end{tikzpicture}
$$
or equivalently, that for any $\ph\in H(\N, Z,Z^*)$, $\ph'\in H(\ov{\N},
Z,Z^*)$, we have 
$$
(Z(\N\times I,Z,Z^*), \ph\otimes \ph')=
(Z(\N'\times I), G(\ph)\otimes G(\ph')).
$$

Comparing both sides, we see that the only difference is that $\N\times I$ 
contains a pair of cylinders $D_a\times I, D_b\times I$:
$$
\tzGluingV
$$
whereas $\N'\times I$ contains instead a single  cell $C\times I$, where $C$ 
is the cylinder connecting boundary circles $\del D_a, \del D_b$:
$$
\tzGluingVI
$$

Thus, it suffices to prove that for any collection 
\begin{alignat*}{2}
&\ph_a\in H(D_a)=\<A,Z\>,\qquad && \ph_b\in H(D_b)=\<B,Z^*\>,\\
&\ph'_a\in H(D_a)^*=\<(A')^*,Z^*\>,\qquad &&\ph_b\in H(D_b)^*=\<(B')^*,Z\>,\\
&\psi_a\in H(\del D_a\times I)=\<X_k,A',X_k^*, A^*\>,\qquad &&
  \psi_b\in H(\del D_b\times I)=\<X_l,B',X_l^*, B^*\>
\end{alignat*}
we have 
\begin{align*}
&Z(D_a\times I, \ph_a\otimes\ph_a'\otimes \psi_a) 
\cdot  Z(D_b\times I, \ph_b\otimes\ph_b'\otimes \psi_b) \\
&\quad=\sum_{i,j}\sqrt{d_i}\sqrt{d_j}\ Z(C\times I, \psi_a,\psi_b, G (\ph_a\otimes \ph_b),
  G (\ph'_a\otimes \ph'_b))
\end{align*}
(the factors $\sqrt{d_i}$, $\sqrt{d_j}$ appear because $\N'\times I$ contains two 
extra edges on the boundary, labeled $i,j$.)

The left hand side is given by 
$$
LHS=\tzGluingI\cdot\qquad  \tzGluingII
$$
Combining explicit computation given in \seref{s:computations1} with the
formula for gluing in \leref{l:gluing_isom2}, we see that  the right hand
side is given by
$$
RHS=\sum_{i,j} \frac{d_i d_j d_Z}{\DD^2} \tzGluingIII
$$
Using \leref{l:pairing}, we can rewrite it as
$$
RHS=\sum_{j} \frac{d_j d_Z}{\DD^2}\tzGluingIV
$$
Since it follows from \leref{l:projector} that for any simple $Z\in \Obj Z(\C)$ and a morphism $\Ph\in \Hom_\C(Z,Z)$, we have 
$$
\frac{1}{\DD^2}\sum_j d_j \quad \tzProjectionIa = \frac{1}{d_Z} \tr (\Ph) \id_Z
$$
this easily implies that the LHS is equal to RHS.

\end{proof}

\begin{example}\label{x:Z(S)}
  Let $\N$ be the sphere with $n$ embedded disks,
colored by objects $Y_1,\dots, Y_n\in \Obj Z(\C)$ (see
\exref{x:n-sphere}). Then 
$$
Z(\N,Y_1, \dots, Y_n)=\Hom_{Z(\C)}(\one, Y_1\otimes
Y_2\otimes\dots\otimes Y_n).
$$
Indeed, by \exref{x:n-sphere}, we have 
$$
H(\N,Y_1, \dots, Y_n)=\bigoplus_{i_1,\dots, i_n\in \Irr(\C)}
\<X_{i_1}, Y_1, X_{i_1}^*,\dots, X_{i_n}, Y_n, X_{i_n}^*\>.
$$

By a direct computation done  in \seref{s:computations}, we see that the 
operator $A=Z(\N\times I)\colon H(\N)\to H(\N)$  is given by 
\begin{align*}
\ph \mapsto &\frac{1}{\DD^{2(n+1)}}\sum_{l, j_1,\dots, j_n\in \Irr(\C)} 
d_l \prod_{a=1}^n 
         \sqrt{d_{i_a}} \sqrt{d_{j_a}} \\
&\qquad \tzNsphereV
\end{align*}

Consider now the subspace $W\subset H(\N,Y_1, \dots, Y_n)$ spanned by
elements of the form
\begin{align*}
&\bigoplus_{j_1,\dots, j_n}\prod_{a=1}^n \sqrt{d_{j_a}} \quad 
\tzNsphereVI\\[10pt] 
&\quad  \psi\in \Hom_{Z(\C)}(\one,
Y_1\otimes\dots\otimes  Y_n)
\end{align*}
Clearly, $W\simeq \Hom_{Z(\C)}(\one, Y_1\otimes\dots\otimes  Y_n)$.

Now, it follows from the previous computation and \leref{l:projector}
that for any $\ph \in H(\N, Y_1,\dots, Y_n)$, we have $A\ph\in W$; on the
other hand, it is immediate that if $\psi\in W$,
then $A\psi=\psi$. Therefore, $A$ is the projector onto $W\simeq
\Hom_{Z(\C)}(\one, Y_1\otimes\dots\otimes  Y_n)$.

\end{example}

\section{Some computations}\label{s:computations}
In this section we give some explicit computations of the TV invariants.

\subsection{Cylinder over an annulus}\label{s:computations1}

Let $F=S^1\times I\times I$ be the cylinder over an annulus, as shown
below. 

\tzCylinderI
 
Then 
$$
\del F=C_a\cup C_b \cup C_{in}\cup C_{out} \cup C\cup \ov{C}
$$
where $C_a, C_b$ are the left and right annuli, $C_{in}$ and $C_{out}$ are
the inner and outer cylinders, and $C$ is the internal cell:
\begin{align*}
&C_a=\tzCL
\qquad
C_b=\tzCR\\
&C_{in}=\tzCin
\qquad
C_{out}=\tzCout
\qquad
C=\tzC
\end{align*}

The pullback of the cell decomposition of $\del F\simeq S^2$ to the sphere
is homeomorphic to the cube shown below:
$$
\tzCube
$$

Drawing the dual graph, we see that given a collection 
\begin{align*}
 &\psi_L\in H(\ov{C_a})=\<A,X_k, (A')^*, X_k^*\>,\qquad 
 \psi_R\in H(\ov{C_b})=\<B_1^*,X_l, B', X_l^*\>\\
 &\psi_{in} \in H(\ov{C_{in}})=\<A^*,X_i, B, X_i^*\>,\qquad 
 \psi_{out}\in H(\ov{C_{out}})=\<A',X_j, (B')^*, X_j^*\>\\
 &\ph\in H(C)=\<X_l,X_j^*,X_k^*,X_j\>,\qquad 
 \ph'\in H(\ov{C})=\<X_j^*,X_k,X_j,X_l^*\>,
\end{align*}
the value $(Z(F),\psi_L\otimes \psi_R\otimes
\psi_{in}\otimes\psi_{out}\otimes\ph\otimes\ph')$ is given by
the following graph:
$$
\tzDualGraphCylII
$$

\subsection{Sphere with $n$ holes}\label{s:n-sphere}

  Let $\N$ be the sphere with $n$ embedded disks,
colored by objects $Y_1,\dots, Y_n\in \Obj Z(\C)$ (see
\exref{x:n-sphere}). Choose the cell decomposition of $N$  as in 
\exref{x:n-sphere}; then 
\begin{align*}
&H(\N,Y_1, \dots, Y_n)\\
&\qquad=\bigoplus_{X_1,\dots, X_n, U_1,
\dots, U_n\in \Irr(\C)}
\<X_1,U_1,X_1^*,\dots, X_n, U_n, X_n^*\>\otimes
\<U_1^*,Y_1\>\otimes\dots\otimes \<U_n^*,Y_n\>\\
&\qquad\simeq\bigoplus_{X_1,\dots, X_n\in \Irr(\C)}
\<X_1,Y_1,X_1^*,\dots, X_n, Y_n, X_n^*\>.
\end{align*}

Consider now the cylinder $\N\times I$ with the cell decomposition shown
in \firef{f:puncturedsphere_cylinder}. 

\begin{figure}[ht]
 \figscale{0.7}{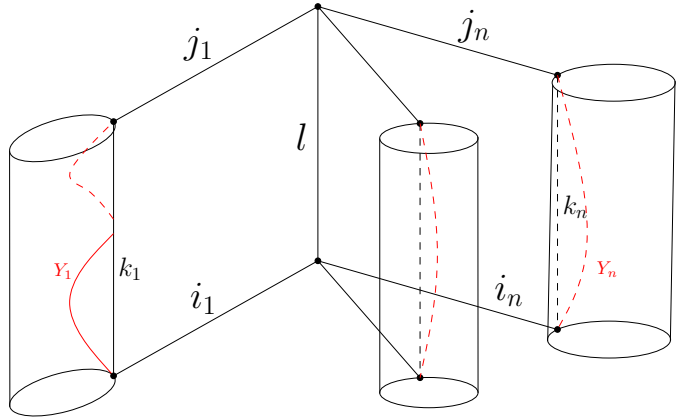}
 \caption{Cylinder over sphere with $n$ embedded
  disks}\label{f:puncturedsphere_cylinder}
\end{figure}

This cell decomposition contains $(n+1)$ 3-cells: $n$ open tubes and one
large 3-cell. Thus, the invariant $Z(\N\times I)$ is given by 

$$
(Z(\N\times I), \ph\otimes \ph')=
\sum_{l,  k_1, \dots, k_n\in \Irr(\C)} 
D \cdot  Z_0\cdot \dots\cdot Z_n
$$
where 
\begin{align*}
\ph&=\ph_0\otimes \ph_1\otimes \dots \otimes \ph_n 
      \in H(\N, i_1,\dots, i_n)\\
&\qquad=
  \<X_{i_1},U_1,X_{i_1}^*,\dots, X_{i_n}, U_n, X_{i_n}^*\>\otimes
  \<U_1^*,Y_1\>\otimes\dots\otimes \<U_n^*,Y_n\>\\
\ph'&=\ph'_0\otimes \ph'_1\otimes \dots \otimes \ph'_n
   \in H(\N, j_1,\dots, j_n)^*\\
   &\qquad=\<X_{j_n},V^*_n,X_{j_n}^*,\dots, X_{j_1}, V^*_1, X_{j_1}^*\>
   \otimes \<V_1,Y_1^*\>\otimes\dots\otimes \<V_n,Y_n^*\>
\end{align*}
$D$ is the normalization factor:
$$
D=\frac{1}{\DD^{2(n+1)}}d_l  
\prod_{a=1}^n d_{k_a}
         \sqrt{d_{i_a}} \sqrt{d_{j_a}} \sqrt{d_{U_a}} \sqrt{d_{V_a}} 
$$
and $Z_0, \dots, Z_n$ are the factors corresponding to the $(n+1)$ 3-cells
of the decomposition:
\begin{align*}
Z_0&=\quad \tzNsphereI\\
Z_i&=\quad \tzNsphereII
\end{align*}
Using \leref{l:pairing2}, we see that it can be rewritten as follows: 
\begin{align*}
(Z(\N\times I), \ph\otimes \ph')&=\\
\sum_{l,  k_1, \dots, k_n\in \Irr(\C)}
&D\qquad \tzNsphereIII
\end{align*}
Thus, identifying $H(\N,i_1,\dots, i_n)\simeq 
\<X_{i_1}, Y_1, X_{i_1}^*,\dots)$ as in \exref{x:n-sphere} and
using \leref{l:pairing}, we see that 
\begin{align*}
(Z(\N\times I), \ph\otimes \ph')&=
\sum_{l\in \Irr(\C)}
\frac{1}{\DD^{2(n+1)}}d_l  
\prod_{a=1}^n 
         \sqrt{d_{i_a}} \sqrt{d_{j_a}} \\
& \tzNsphereIV
\end{align*}

\begin{bibdiv}
\begin{biblist}

\bib{BK}{book}{
   author={Bakalov, Bojko},
   author={Kirillov, Alexander, Jr.},
   title={Lectures on tensor categories and modular functors},
   series={University Lecture Series},
   volume={21},
   publisher={American Mathematical Society},
   place={Providence, RI},
   date={2001},
   pages={x+221},
   isbn={0-8218-2686-7},
   review={\MR{1797619 (2002d:18003)}},
}

\bib{barrett}{article}{
   author={Barrett, John W.},
   author={Westbury, Bruce W.},
   title={Invariants of piecewise-linear $3$-manifolds},
   journal={Trans. Amer. Math. Soc.},
   volume={348},
   date={1996},
   number={10},
   pages={3997--4022},
   issn={0002-9947},
   review={\MR{1357878 (97f:57017)}},
   doi={10.1090/S0002-9947-96-01660-1},
}
\bib{casali}{article}{
   author={Casali, Maria Rita},
   title={A note about bistellar operations on PL-manifolds with boundary},
   journal={Geom. Dedicata},
   volume={56},
   date={1995},
   number={3},
   pages={257--262},
   issn={0046-5755},
   review={\MR{1340786 (96h:57021)}},
   doi={10.1007/BF01263566},
}

\bib{drinfeld}{article}{ 
  author={Drinfeld, Vladimir},
  author={Gelaki,Shlomo},
  author={Nikshych, Dmitri},
  author={Ostrik, Victor},
  title={On braided fusion categories I},
  eprint={arXiv:0906.0620},
}

\bib{ENO}{article}{
   author={Etingof, Pavel},
   author={Nikshych, Dmitri},
   author={Ostrik, Viktor},
   title={On fusion categories},
   journal={Ann. of Math. (2)},
   volume={162},
   date={2005},
   number={2},
   pages={581--642},
   issn={0003-486X},
   review={\MR{2183279 (2006m:16051)}},
   doi={10.4007/annals.2005.162.581},
}

\bib{FHLT}{article}{ 
  author={Freed, Daniel},
  author={Hopkins, Michael},
  author={Lurie, Jacob},
  author={Teleman, Constantin},
  title={Topological quantum field theories from compact Lie groups},
  eprint={arXiv:0905.0731},
}

\bib{freed-quinn}{article}{
   author={Freed, Daniel S.},
   author={Quinn, Frank},
   title={Chern-Simons theory with finite gauge group},
   journal={Comm. Math. Phys.},
   volume={156},
   date={1993},
   number={3},
   pages={435--472},
   issn={0010-3616},
   review={\MR{1240583 (94k:58023)}},
}

\bib{lurie}{article}{ 
  author={Lurie, Jacob},
  title={On the classification of topological quantum field theories},
  eprint={http://www-math.mit.edu/~lurie/},
}

\bib{muger1}{article}{
   author={M{\"u}ger, Michael},
   title={From subfactors to categories and topology. I. Frobenius algebras
   in and Morita equivalence of tensor categories},
   journal={J. Pure Appl. Algebra},
   volume={180},
   date={2003},
   number={1-2},
   pages={81--157},
   issn={0022-4049},
   review={\MR{1966524 (2004f:18013)}},
   doi={10.1016/S0022-4049(02)00247-5},
}

\bib{muger2}{article}{
   author={M{\"u}ger, Michael},
   title={From subfactors to categories and topology. II. The quantum double
   of tensor categories and subfactors},
   journal={J. Pure Appl. Algebra},
   volume={180},
   date={2003},
   number={1-2},
   pages={159--219},
   issn={0022-4049},
   review={\MR{1966525 (2004f:18014)}},
   doi={10.1016/S0022-4049(02)00248-7},
}

\bib{oeckl}{book}{
   author={Oeckl, Robert},
   title={Discrete gauge theory},
   note={From lattices to TQFT},
   publisher={Imperial College Press},
   place={London},
   date={2005},
   pages={xii+202},
   isbn={1-86094-579-1},
   review={\MR{2174961 (2006i:81142)}},
}

\bib{ostrik-module}{article}{
   author={Ostrik, Victor},
   title={Module categories, weak Hopf algebras and modular invariants},
   journal={Transform. Groups},
   volume={8},
   date={2003},
   number={2},
   pages={177--206},
   issn={1083-4362},
   review={\MR{1976459 (2004h:18006)}},
   doi={10.1007/s00031-003-0515-6},
}

		
\bib{pachner}{article}{
   author={Pachner, U.},
   title={Konstruktionsmethoden und das kombinatorische
   Hom\"oomorphieproblem f\"ur Triangulationen kompakter semilinearer
   Mannigfaltigkeiten},
   language={German},
   journal={Abh. Math. Sem. Univ. Hamburg},
   volume={57},
   date={1987},
   pages={69--86},
   issn={0025-5858},
   review={\MR{927165 (89g:57027)}},
}

\bib{turaev}{book}{
   author={Turaev, V. G.},
   title={Quantum invariants of knots and 3-manifolds},
   series={de Gruyter Studies in Mathematics},
   volume={18},
   publisher={Walter de Gruyter \& Co.},
   place={Berlin},
   date={1994},
   pages={x+588},
   isbn={3-11-013704-6},
   review={\MR{1292673 (95k:57014)}},
}
\bib{turaev94}{article}{
   author={Turaev, Vladimir},
   title={Quantum invariants of links and $3$-valent graphs in
   $3$-manifolds},
   journal={Inst. Hautes \'Etudes Sci. Publ. Math.},
   number={77},
   date={1993},
   pages={121--171},
   issn={0073-8301},
   review={\MR{1249172 (94j:57012)}},
}
\bib{turaev-conference}{article}{
   author={Turaev, Vladimir},
   title={From monoidal categories to 3-manifold invariants and
back},
   conference={
    title={AMS Meeting},
    address={St. Paul, MN},
    date={April 2010},
  }
}
\bib{TV}{article}{
   author={Turaev, V. G.},
   author={Viro, O. Ya.},
   title={State sum invariants of $3$-manifolds and quantum $6j$-symbols},
   journal={Topology},
   volume={31},
   date={1992},
   number={4},
   pages={865--902},
   issn={0040-9383},
   review={\MR{1191386 (94d:57044)}},
   doi={10.1016/0040-9383(92)90015-A},
}

\end{biblist}
\end{bibdiv}

\end{document}